\newtheorem{theorem}{Theorem}[section] 
\newtheorem{corollary}[theorem]{Corollary}
\newtheorem{lemma}[theorem]{Lemma}
\newtheorem{proposition}[theorem]{Proposition}
\newtheorem*{proposition*}{Proposition}
\newtheorem*{question*}{Question}
\newtheorem*{theorem*}{Theorem}
\newtheorem*{claim*}{Claim}
\newtheorem*{corollary*}{Corollary}
\theoremstyle{definition}
\newtheorem{definition}[theorem]{Definition}
\theoremstyle{remark}
\newtheorem*{remark*}{Remark}
\newtheorem*{remarks*}{Remarks}
\newcommand{\R}{\mathbb{R}}\newcommand{\N}{\mathbb{N}}
\newcommand{\Z}{\mathbb{Z}}\newcommand{\Q}{\mathbb{Q}}
\newcommand{\C}{\mathbb{C}}
\newcommand{\D}{\mathbb{D}}
\renewcommand{\S}{\mathbb{S}}
\def\eqalign#1{\null\,\vcenter{\openup\jot 
\ialign{\strut\hfil$\displaystyle{##}$&
$\displaystyle{{}##}$\hfil \crcr #1\crcr }}\,}
\def\eqalignno#1{\displ@y \tabskip=\@centering
\halign to\displaywidth{\hfil$\@lign\displaystyle{##}$
\tabskip=0pt &$\@lign\displaystyle{{}##}$

\hfil\tabskip=\@centering
$\llap{$\@lign##$}\tabskip=Opt\crcr #1\crcr}}
\begin{document}

\author[P. Le Calvez]{Patrice Le Calvez}
\address{ Sorbonne Universit\'e, Universit\'e Paris-Cit\'e, CNRS, IMJ-PRG, F-75005, Paris, France \enskip \& \enskip Institut Universitaire de France}
\curraddr{}
\email{patrice.le-calvez@imj-prg.fr}

\title[A finite dimensional proof of a result of Hutchings about irrational pseudo-rotations]{A finite dimensional proof of a result of  Hutchings about irrational pseudo-rotations}

\bigskip
\bigskip

\bigskip
\begin{abstract} We prove that the Calabi invariant of a $C^1$ pseudo-rotation of the unit disk, that coincides with a rotation on the unit circle, is equal to its rotation number. This result has been shown some years ago  by Michael Hutchings (under very slightly stronger hypothesis). While the original proof used Embedded Contact Homology techniques, the  proof of this article uses generating functions and the dynamics of the induced gradient flow. 

 \end{abstract}
\maketitle

\bigskip
\noindent {\bf Keywords:} Irrational pseudo-rotation, Calabi invariant, generating function, rotation number, linking number

\bigskip
\noindent {\bf MSC 2020:}  37E30, 37E45, 37J11

\maketitle

\bigskip
\bigskip

\bigskip
\bigskip

\section{Introduction}

\subsection{Statement of the main theorem}
We denote by $\D$ the closed unit disk of the Euclidean plane and by $\S$ the unit circle. We will furnish $\D$ with the standard area form $\omega=dx\wedge dy$ and will denote by $\mathrm {Diff}^1_{\omega}(\D)$ the group of diffeomorphisms of class $C^1$ that preserve $\omega$ (we will say that $f$ is {\it symplectic}). We will also denote by $\mathrm {Homeo}_{*}(\D)$ the group of orientation preserving homeomorphisms of $\D$.

If $f\in \mathrm {Diff}^1_{\omega}(\D)$ fixes every point in a neighborhood of $\S$, its  {\it Calabi invariant} $\mathrm{Cal}(f)\in\R$ is a well-studied object that has several interpretations (see \cite{Ca}, \cite{Fa}, \cite{ GaGh} for instance). It admits a natural extension to $\mathrm {Diff}^1_{\omega}(\D)$. We will explain this extension, as described by Benoit Joly in his thesis \cite{J}. 
If $\kappa$ is a primitive of $\omega$, and $A:\D\to\R$ a primitive of $f^*(\kappa)-\kappa$, then $\int_{\S} A \, d\mu$ does not depend on $\mu$, where $\mu$ is a Borel probability measure invariant by $f_{\vert \S}$\footnote{Such a measure is unique when the rotation number of $f_{\vert \S}$ is irrational and supported in the periodic point set otherwise.}.  Consequently, there exists a unique primitive $A_{f,\kappa}$ of $f^*(\kappa)-\kappa$ satisfying $\int_{\S} A_{f,\kappa}\, d\mu=0$ for such $\mu$. Moreover, the quantity $\mathrm{Cal}(f)=\int_{\D} A_{f,\kappa}\,\omega$ does not depend on the choice of $\kappa$.  For example we have $\mathrm{Cal}(f)=0$ if $f$ is an Euclidean rotation. Note that $\mathrm{Cal}(f)$ is the usual Calabi invariant in case $f$ fixes every point in a neighborhood of $\S$ (in that case $A_{f,\kappa}$ vanishes on a neighborhood of $\S$). 

We can also define a real function $\widetilde{\mathrm{Cal}}$ on the universal covering space $\widetilde{\mathrm {Diff}^1_{\omega}(\D)}$ of  $\mathrm {Diff}^1_{\omega}(\D)$ as follows. Every identity isotopy of $f$ in $\mathrm {Diff}^1_{\omega}(\D)$, meaning every continuous path $I=(f_s)_{s\in[0,1]}$ in $\mathrm {Diff}^1_{\omega}(\D)$ joining the identity map of $\D$ to $f$, is homotopic, relative to its endpoints, to a Hamiltonian isotopy $I'=(f'_s)_{s\in[0,1]}$. It means that there exists a time dependent divergence free vector field $(X_s)_{s\in[0,1]}$ of class $C^1$ such that for every $z\in\D$ it holds that ${d\over ds} f'_s(z)= X_s(f'_s(z))$. There exists a (uniquely defined) family $(H_s)_{s\in[0,1]}$ of functions of class $C^2$ vanishing on $\S$, such that for every $s\in [0,1]$, every $z\in\D$ and every $v\in\R^2$ it holds that  $dH_s(z).v=\omega(v, X_s(z))$. The quantity $\widetilde{\mathrm{Cal}}(I)=\int_0^1 \left(\int_{\D}  H_s\,\omega\right) \, ds$ depends only on the homotopy class $[I]$ of $I$ and we can also denote it $\widetilde{\mathrm{Cal}}([I])$. Let us give a simple example. For every $\alpha\in\R$, denote $R_{\alpha}$ the rotation of angle $2\pi\alpha$ and  $T_{\alpha}$ the isotopy $(R_{s\alpha})_{s\in[0,1]}$. It holds that $ \widetilde{ \mathrm{Cal}}([T_{\alpha}])= \pi^2\alpha$. 
As stated in \cite{J}, there is a link between $\mathrm{Cal}$ and $\widetilde{\mathrm{Cal}}$ that we will explain now. Every identity isotopy $I=(f_s)_{s\in[0,1]}$ of $f$ in $\mathrm {Diff}^1_{\omega}(\D)$ defines by restriction  an identity isotopy $I_{\vert \S}=(f_s{}_{\vert \S})_{s\in[0,1]}$  of $f_{\vert \S}$ in $\mathrm {Diff}^1(\S)$. Moreover, every homotopy class $[I]\in\widetilde{\mathrm {Diff}^1_{\omega}(\D)}$ defines by restriction a homotopy class $[I]_{\S}\in \widetilde{\mathrm {Diff}_*^1(\S)}$, where  
$\widetilde{\mathrm {Diff}_*^1(\S)}$ is the universal covering space of the group $\mathrm {Diff}_*^1(\S)$ of orientation preserving diffeomorphisms of the circle. We have the following equation, 
$$ \widetilde{ \mathrm{Cal}}([I])= \mathrm{Cal} (f)+\mathrm{rot}([I]_{\S}),$$ where $\mathrm{rot}([I]_{\S})$ is the {\it Poincar\'e rotation number} of $[I]_{\S}$. 
In particular, if $\mu$ is a Borel probability measure invariant by $f_{\Sigma}$, then the asymptotic cycle of $\mu$ defined by an isotopy $I$ (see Schwartzman \cite{Sc}) is equal to $\rho([I])\varpi\in H_1(\S,\R)$, where $\varpi$ is the fundamental class of $\S$ defining its usual orientation. 

There is a more dynamical interpretation of the Calabi invariant in the case of a compactly supported symplectic diffeomorphism of the open disk, due to Fathi \cite{Fa} and developed by Gambaudo and Ghys \cite{GaGh}. This interpretation is still valid in this more general situation. Consider the usual angular form $$d\theta ={ydx-xdy\over x^2+y^2}$$ and define
$$W=\{(z,z')\in \D\times \D\, \vert z\not=z'\}.$$  Consider $f\in  \mathrm {Diff}^1_{\omega}(\D)$. For every identity isotopy  $I=(f_t)_{t\in[0,1]}$ of $f$  in $\mathrm {Homeo}_*(\D)$ and every $(z,z')\in W$, one gets a path
$$\begin{aligned} I_{z,z'}: [0,1]&\to \R^2\\
 s&\mapsto f_s(z')-f_s(z).\end{aligned}$$
The function
 $$\begin{aligned} \mathrm{ang}_I: W&\to \R\\
 (z,z')&\mapsto {1\over 2\pi} \int _{I_{z,z'}} d\theta\end{aligned}$$
depends only on the homotopy class of $I$ in $\mathrm {Homeo}_*(\D)$ and is bounded because $f$ is a diffeomorphism of class $C^1$. In particular, one naturally gets a function $\mathrm{ang}_{[I]}$ for every $[I]\in\widetilde{\mathrm {Diff}^1_{\omega}(\D)}$. 

If $I=(f_s)_{s\in[0,1])}$ and $I'=(f'_s)_{s\in[0,1]}$ are two identity isotopies in $\mathrm {Diff}^1_{\omega}(\D)$, one can define an identity isotopy $II'=(f''_s)_{s\in[0,1]}$ in $\mathrm {Diff}^1_{\omega}(\D)$ writing:
$$f''_s= \begin{cases} f'{}_{2s} &\text{ if $0\leq s\leq  1/2$,}\\  f_{2s-1}\circ f'_1&\text{ if  $1/2\leq s\leq 1$,}\end{cases}.$$ The homotopy class of $I'J'$ depends only on $[I]$ and $[I']$ and one gets a group structure on  $\widetilde{\mathrm {Diff}}^1_{\omega}(\D)$ by setting $[I][I']=[II']$. Note that for every $n\geq 1$ we have
$$\mathrm{ang}_{[I]^n}=\sum_{k=0} ^{n-1} \mathrm{ang}_{[I]}\circ (f^k\times f^k).$$
Generalizing a proof due to Shelukhin \cite{Sh}  the following equality
 $$ \widetilde{ \mathrm{Cal}}([I])=\int_{W} \mathrm{ang}_{[I]} (z,z') \,\omega(z)\, \omega(z').$$
is proved in \cite{J}.  Let us add that this interpretation of the Calabi invariant has permitted to Gambaudo and Ghys \cite{GaGh}  to prove that two elements of $ \mathrm {Diff}^1_{\omega}(\D)$ fixing every point in a neighborhood of $\S$ and conjugate in $\mathrm {Homeo}_{*}(\D)$ have the same Calabi invariant. To conclude, just note the following:
 
 \begin {itemize}
 \item the map $[I]\mapsto \widetilde {\mathrm{Cal} (I)}$ defined on $\widetilde{\mathrm {Diff}^1_{\omega}(\D)}$
is a morphism;

\item   the map $f\mapsto   {\mathrm{Cal} (f)}$ defined on $\mathrm {Diff}^1_{\omega}(\D)$is a homogeneous quasi-morphism;

\item for every $[I]$, $[I']$ in $\widetilde{\mathrm {Diff}^1_{\omega}(\D)}$, there is a unique $k\in\Z$ such that $[I']=[I][T_k]$ and we have:
$$\mathrm{ang}_{[I']} =\mathrm{ang}_{[I]}+k, \enskip\widetilde{ \mathrm{Cal}}([I']) =\widetilde{ \mathrm{Cal}}([I])+\pi^2k.$$

\item for every $[I]\in \widetilde{\mathrm {Diff}^1_{\omega}(\D)}$ and every $p$, $q$ in $\Z$ we have
$$\widetilde{ \mathrm{Cal}}([I]^q [T_p]) =\widetilde{ \mathrm{Cal}}([I])^q+\pi^2 p.$$

\end{itemize}

 We will say that  $f\in\mathrm {Diff}^1_{\omega}(\D)$ is an {\it irrational pseudo-rotation} if it fixes $0$ and does not possess any other periodic point. Using an extension of Poincar\'e-Birkhoff theorem due to Franks \cite{Fr}, we know that for every lift $[I]\in\widetilde{\mathrm {Diff}^1_{\omega}(\D)}$ of $f$, there exists $\alpha\in\R\setminus\Q$ such that the sequence of  maps $ z\mapsto n^{-1}\mathrm{ang}_{[I]^n}(0,z)$ converges uniformly to the function $z\mapsto \alpha$. We will say that $\alpha$ is the rotation number of $[I]$ and write  $\alpha=\widetilde{ \mathrm{rot}}([I])$. Of course, $\overline \alpha=\alpha+\Z\in\R/\Z$ is independent of the choice of $I$ and we say that $f$ is a pseudo-rotation of rotation number $\overline \alpha$. Note that $\overline{\alpha}$ is the Poincar\'e rotation number of $f_{\vert \S}$.
 
 The goal of the article is to prove the following:
 
 \begin{theorem}  \label{th:principal} Let $f\in\mathrm {Diff}^1_{\omega}(\D)$ be an irrational pseudo-rotation such that $f_{\vert \S}$ is $C^1$-conjugate to a rotation. If $[I]\in\widetilde{\mathrm {Diff}^1_{\omega}(\D)}$ is a lift of $f$, then we have $$\widetilde{ \mathrm{Cal}}([I])= \pi^2\,\widetilde{ \mathrm{rot}}([I])$$
 or equivalently, we have
 $ \mathrm{Cal}(f)= 0.$

 \end{theorem}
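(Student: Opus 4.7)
The plan is to reduce the theorem to an asymptotic identity for the angular cocycle, and then to establish that identity with the help of generating functions. The starting point is Joly's formula
$$\widetilde{\mathrm{Cal}}([I]^n) = \int_W \mathrm{ang}_{[I]^n}(z,z')\,\omega(z)\,\omega(z'),$$
together with the fact that $\widetilde{\mathrm{Cal}}$ is a morphism (so $\widetilde{\mathrm{Cal}}([I]^n)=n\,\widetilde{\mathrm{Cal}}([I])$) and the cocycle relation $\mathrm{ang}_{[I]^n}=\sum_{k=0}^{n-1}\mathrm{ang}_{[I]}\circ(f^k\times f^k)$. Combining these, one has, for every $n\geq 1$,
$$\widetilde{\mathrm{Cal}}([I]) \;=\; \frac{1}{n}\int_W \mathrm{ang}_{[I]^n}(z,z')\,\omega(z)\,\omega(z').$$
Since $\int_{\D}\omega=\pi$, the target equality $\widetilde{\mathrm{Cal}}([I])=\pi^{2}\widetilde{\mathrm{rot}}([I])$ amounts to showing that $n^{-1}\mathrm{ang}_{[I]^n}(z,z')\to \alpha:=\widetilde{\mathrm{rot}}([I])$ in $L^{1}(W,\omega\otimes\omega)$.

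The hypothesis on $f$ already gives this limit along the pair $(0,z)$: by the definition of $\widetilde{\mathrm{rot}}([I])$ via Franks' extension of Poincar\'e--Birkhoff, $n^{-1}\mathrm{ang}_{[I]^n}(0,z)\to\alpha$ uniformly in $z\in\D$. The key task is therefore to transfer this convergence from pairs through the fixed point to generic pairs $(z,z')\in W$. The tool for doing so is a finite-dimensional generating function $G_n$ for the iterate $f^n$, built \emph{\`a la} Chaperon / Laudenbach--Sikorav by decomposing $f$ as a composition of symplectic maps each $C^0$-close to the identity. The critical points of $G_n$ correspond bijectively to the periodic orbits of $f$ of period dividing $n$, and since $f$ is an irrational pseudo-rotation these reduce to the single orbit $\{0\}$; moreover the critical value of $G_n$ at $0$ is computable directly in terms of $\alpha$, using that $\mathrm{ang}_{[I]^n}(0,0)=n\alpha+O(1)$.

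The dynamics of the gradient flow of $G_n$ (with respect to a suitable Riemannian metric) is then used to deform the integration domain onto the critical set together with the boundary. The $C^{1}$-conjugacy of $f|_{\S}$ with a rotation is essential here: after conjugation, we may assume $f|_{\S}=R_{\alpha}$ and $I|_{\S}=T_{\alpha}$ up to an inessential term, which makes the generating function along the boundary completely explicit and identifies its contribution to $\int_W \mathrm{ang}_{[I]^n}$ as exactly $\pi^{2}\alpha$ per iterate, with controlled error. The interior part of the integral is captured by the critical point $\{0\}$ via a Morse-theoretic argument on $G_n$; because this critical set is a single point, its contribution is $o(n)$ in the Calabi integral, which is a two-dimensional phase-space integral.

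The main obstacle I anticipate is controlling the gradient-flow deformation uniformly in $n$: although the absence of periodic orbits of $f$ away from $0$ prevents the flow of $G_n$ from being trapped at spurious critical points, one must rule out oscillatory cancellations coming from the non-compactness of $W$ near the diagonal (where $\mathrm{ang}_{[I]}$ is bounded but its gradient blows up) and near $\S\times\S$ (where the boundary conjugacy is needed to control the flow lines). The generating-function machinery, combined with the rigidity of $f|_{\S}$, is precisely what replaces Hutchings' ECH input: it converts what was originally a spectral computation in embedded contact homology into a finite-dimensional Morse-theoretic estimate which, after passing to the limit $n\to\infty$ and dividing by $n$, yields $\widetilde{\mathrm{Cal}}([I])=\pi^{2}\alpha$.
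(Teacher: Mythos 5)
Your reduction to the claim that $n^{-1}\mathrm{ang}_{[I]^n}(z,z')\to\alpha$ in $L^1(W,\omega\otimes\omega)$ is correct and is essentially the same reformulation the paper works with (via linking numbers), but from that point on the proposal diverges from the paper's argument and leaves precisely the hard steps unresolved. The paper does not study the gradient flow of a generating function $G_n$ for $f^n$ on $\D$ alone, nor does it take the ergodic limit $n\to\infty$ of the Calabi integral directly. It first extends $f$ to the whole plane as an integrable polar twist outside $\D$; this creates a one-parameter family of auxiliary invariant circles $S_{a/b}$ which are the key actors. For each such $a/b$ close to $\alpha$, the singular set of the gradient vector field $\zeta$ on $E_b$ consists of $\{\mathbf 0\}$ and a curve $\Sigma_a$, which bounds an invariant disk $\Delta_a$ (Proposition \ref{pr:invariantdisk}); the gradient lines of $\zeta$ restricted to $\Delta_a$ project by $q_1$ to a topological radial foliation $\mathcal F_1$ of $D_{a/b}$ whose leaves are Brouwer lines of $I^bT_{-a}$, whose sweeping area is $C(a,b)=O(|a-b\alpha|)$, and whose winding distance to $f^{-b}(\mathcal F_1)$ is $\leq 4mb$. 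These three quantitative facts, fed into the topological linking-number formalism of Section \ref{se:Calabi}, bound $\widetilde{\mathrm{Cal}}(I_{\vert D_{a/b}})$ and yield the result by letting $a/b\to\alpha$.

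By contrast, your proposal makes two appeals that do not withstand scrutiny as stated. First, you say the critical points of $G_n$ are in bijection with periodic orbits of period dividing $n$ and so reduce to $\{0\}$; but the generating-function machinery requires the extension of $f$ to all of $\R^2$, and the extended map does have other periodic orbits (the circles $S_{a/b}$). Ignoring them removes exactly the structure the paper exploits. Second, the claims that the interior/critical-point contribution to $\int_W\mathrm{ang}_{[I]^n}$ is $o(n)$ and that the boundary contribution is $\pi^2\alpha n+O(1)$ are asserted, not proved; the difficulty of relating a two-dimensional phase-space integral over $W$ to a Morse deformation on a $2mn$-dimensional space is exactly what requires the invariant-disk construction and the foliation/linking-number bookkeeping. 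As written, your ``deform the integration domain onto the critical set and the boundary'' step is a genuine gap: there is no mechanism described for how the gradient flow of $G_n$ (which lives in $E_n$) acts on the domain of integration $W\subset\D\times\D$, nor any estimate ruling out an $O(n)$ contribution from the interior. You would need the analogue of the paper's properties (1), (2), (3) on $\mathcal F_1$ -- the energy bound $C(a,b)$, the Brouwer line property, and the $4mb$ winding bound -- to convert the finite-dimensional Morse data into a bound on the Calabi integral, and none of these appear in your outline.
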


 It is not difficult to prove that a $C^1$ diffeomorphism of $\S$ can be extended to a $C^1$ symplectic diffeomorphism of $\D$. So, it is sufficient to prove the theorem in case $f$ coincides with a rotation on the unit circle. This theorem was already known and due to Michael Hutchings (with very slightly stronger hypothesis), appearing as an easy consequence of a theorem we will recall now. Suppose that $f\in\mathrm {Diff}^1_{\omega}(\D)$ coincides with a rotation $R_{\alpha}$ in a neighborhood of $\S$. Fix a primitive $\kappa$ of $\omega$ and note that $A_{f,\kappa}$ vanishes on $\S$ whether $\alpha$ is rational or irrational. If $O$ is a periodic orbit of $f$, one proves easily that 
 $\sum_{z\in O } A_{f,\kappa}(z)$ does not depend on the choice of $\kappa$. So, one can define the {\it mean action} of $O$ as being
 $$ \mathrm{act}(O)= {1\over\#O}\sum_{z\in O } A_{f,\kappa}(z).$$ One has the following (\cite{H})\footnote{The proof is stated for smooth diffeomorphisms but should be possibly extended supposing a low differentiability condition, as Michael Hutchings explained to us.}

 \begin{theorem}  \label{th:hutchings} Suppose that $f\in\mathrm {Diff}^{\infty}_{\omega}(\D)$ coincides with a rotation in a neighborhood of $\S$ and denote $\mathcal O$ the set of periodic orbits of $f$. Then it holds that:
 
 \begin{itemize} 
 \item if  ${\mathrm{Cal} (f)}<0$, then $\inf_{O\in \mathcal O} \mathrm {act}(O) \leq {\mathrm{Cal} (f)}$,
 
 \item  if  ${\mathrm{Cal} (f)}>0$, then $\sup_{O\in \mathcal O} \mathrm {act}(O) \geq {\mathrm{Cal} (f)}$.
 
 \end{itemize}
 
 \end{theorem}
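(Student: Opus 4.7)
The plan is to reduce Theorem \ref{th:hutchings} to a finite-dimensional variational problem via generating functions, in the spirit of the paper. Since $f$ coincides with a rotation in a neighborhood of $\S$, one can factor $f = \psi_N \circ \cdots \circ \psi_1$ as a composition of symplectomorphisms each $C^1$-close enough to a rigid rotation to admit a classical generating function $S_j(z,z')$ that vanishes on $\S$. On the manifold of closed discrete trajectories $\underline{z} = (z_0, z_1, \ldots, z_{N-1}, z_N = z_0)$ with $z_j = \psi_j(z_{j-1})$, set
$$W(\underline{z}) = \sum_{j=1}^{N} S_j(z_{j-1}, z_j).$$
Standard generating function calculus then gives that critical points of $W$ are exactly the periodic orbits of $f$, and that at such a critical point traced out by a $q$-periodic orbit $O$, the value of $W$ equals $q\cdot\mathrm{act}(O)$.

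The next key step is to relate $\mathrm{Cal}(f)$ to the same functional computed on \emph{free} (non-closed) trajectories. By the cocycle property relating the primitives $A_{\psi_j,\kappa}$ to $A_{f,\kappa}$, one obtains $A_{f,\kappa}(z_0) = W(\underline{z}(z_0))$ along the $f$-trajectory starting at $z_0$, so $\mathrm{Cal}(f) = \int_{\D} A_{f,\kappa}\,\omega$ is, up to explicit normalization, an average value of the same discrete action $W$ over free trajectories. Replacing $f$ by $f^q$ multiplies both sides by $q$ and refines the underlying space of closed trajectories, providing additional flexibility.

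Assume now that $\mathrm{Cal}(f) > 0$; the opposite case is symmetric (and can be handled by replacing $f$ with $f^{-1}$). One would then like to argue that the supremum of $W$ (or its iterate $W^{(q)}$) over closed trajectories is at least the average over free trajectories, thereby producing a periodic orbit $O$ with $\mathrm{act}(O) \geq \mathrm{Cal}(f)$. The main obstacle I foresee is that a naive supremum may collapse onto the trivial fixed point $0$ (whose orbit has action $0$): some genuine min-max or Lusternik-Schnirelmann-type argument is required to extract a critical level above the mean. The natural tool is the gradient flow of $W$, which is the dynamical object the paper advertises: starting from the family of free trajectories parametrized by $z_0\in\D$, whose average action is $\mathrm{Cal}(f)$, one deforms along the gradient flow towards closed trajectories while tracking the drop of $W$, and uses a topological or degree argument to force the deformed family to encounter a critical point of $W$ with the required action. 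Compactness issues at $\S$ are facilitated by the assumption that $f$ is a rotation near the boundary, which pins down the asymptotic behaviour of trajectories and prevents escape.
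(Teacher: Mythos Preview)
The paper does not prove Theorem~\ref{th:hutchings}: it is quoted from Hutchings~\cite{H}, and the paper explicitly notes that the original proof proceeds by a reduction to contact geometry followed by Embedded Contact Homology. In fact the paper poses, as an open question, whether results of this strength can be reached by generating-function methods (``Can we hope to find proofs of these deep results using such classical tools?''). So there is no in-paper argument to compare your proposal against, and what you are attempting is precisely the kind of finite-dimensional proof the author flags as not yet available.

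Your outline has a genuine gap at the variational step, and you correctly identify it yourself. The first parts are reasonable in spirit: decomposing $f$ into untwisted pieces, building the discrete action $W$, identifying critical points of $W$ on the space of \emph{closed} chains with periodic orbits and critical values with $q\cdot\mathrm{act}(O)$, and writing $\mathrm{Cal}(f)=\int_{\D}A_{f,\kappa}\,\omega$ as the $\omega$-average of $W$ evaluated along \emph{open} chains $z_0\mapsto(\psi_1(z_0),\dots,f(z_0))$. But the inference ``some critical value of $W$ on closed chains is at least the mean of $W$ on open chains'' is exactly the content of Hutchings' inequality and does not follow from soft topology. The open chains form a disk's worth of points in the full configuration space $E_b$, not a cycle that is forced, under the gradient flow, to hang on a critical level above the average; a degree or Lusternik--Schnirelmann argument needs a \emph{homological} constraint linking the family to the critical set, and none is supplied. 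Concretely, deforming the family of open chains by the gradient of $W$ will in general push them off to the boundary strata (where $f$ is a rotation) or to low-action critical points without any lower bound tied to $\mathrm{Cal}(f)$. The ECH proof produces this constraint via the $U$-map and the volume property, which have no evident finite-dimensional substitute in your sketch. Until that step is filled in, the proposal is a restatement of the difficulty rather than a proof.
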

 
 It is easy to prove that if  $f\in\mathrm {Diff}^1_{\omega}(\D)$ coincides with a rotation on $\S$, then we have $\mathrm {act}(\{0\}) =0$. Consequently, by Theorem  \ref{th:hutchings}, it holds that ${\mathrm{Cal} (f)}=0$ if $f$ is an irrational pseudo-rotation. It must be noticed that Hutchings' theorem has been recently improved by Pirnapasov \cite{P} (still in the smooth category) that uses a preliminary extension result. A much weaker hypothesis on the boundary is needed, even weaker than been a rotation on the boundary. A nice corollary, is the fact that $\mathrm{Cal}(f)=0$ for every smooth irrational pseudo-rotation $f$, regardless of any condition on the boundary.  
 
  Let us continue with a nice consequence of Theorem \ref{th:principal} . Let $D\subset \D$ be a closed disk that does not contain $0$ and does not meet $\S$. One constructs easily $f'\in\mathrm {Diff}^{\infty}_{\omega}(\D)$ arbitrarily close to the identity map for the $C^{\infty}$-topology, fixing every point outside $D$ and such that $\mathrm{Cal}(f')\not=0$. If $[I]$ is a lift of $f$ and $[I']$ is the lift of $f'$ such that $[I']_{\S}$ is the trivial homotopy, then it holds that $$\widetilde {\mathrm{Cal}} ([I][I'])=\widetilde{\mathrm{Cal}} ([I])+ \widetilde{\mathrm{Cal}}([I'])=\widetilde {\mathrm{rot}}([I])+ \mathrm{Cal}(f') \not=\widetilde{\mathrm{rot}}({[I]})= \widetilde{\mathrm{rot}}([I][I']).$$It implies that $f\circ f'$ is not an irrational pseudo-rotation, which means that it has at least one periodic orbit different from $\{0\}$. Such a periodic orbit must meet $D$.  
 
The proof of Hutchings is based on a reduction to a problem of contact geometry and then the applications of methods of Embedded Contact Homology theory. The previous perturbation result is true in a more general situation. Asaoka and Irie \cite{AI}, using Embedded Contact Homology as well, have proved that it  remains true provide $f\in\mathrm {Diff}^1_{\omega}(\D)$  has no periodic point inside $D$. 
Other striking applications of contact or symplectic geometry to dynamics on surfaces have appeared recently that use elaborated tools of contact or symplectic geometry (for instance see Cristofaro-Gardiner, Humil\`ere, Seyfaddini \cite{CrHS} or Cristofaro-Gardiner, Prasad, Zhang \cite{CrPZ}). The proof of Theorem \ref {th:principal} that will be given in the present article does not use Floer homology but only generating functions. Can be hope be able to find proofs of these deep results using such classical tools?

 \subsection{Idea of the proof}
Let us state first the following result of Bramham  \cite{Br}:

\begin{theorem} \label{bramham}Every irrational pseudo-rotation $f\in\mathrm {Diff}^{\infty}_{\omega}(\D)$ is the  limit, for the $C^0$ topology, of a sequence of finite order $C^{\infty}$ diffeomorphisms.

\end{theorem}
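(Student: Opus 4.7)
The plan is to use the continued fraction convergents $p_n/q_n$ of the rotation number $\alpha=\widetilde{\mathrm{rot}}([I])$ and build, for each $n$, a $C^\infty$ diffeomorphism $g_n$ of order $q_n$ with $d_{C^0}(g_n,f)\to 0$. Concretely one looks for $g_n$ of the form $h_n^{-1}\circ\widetilde R_n\circ h_n$, where $\widetilde R_n$ is a genuine $q_n$-th root of the identity --- a $C^\infty$ symplectic perturbation of the rigid rotation $R_{p_n/q_n}$ --- and $h_n$ is a symplectic diffeomorphism of $\D$ close to the identity. Since $|\alpha-p_n/q_n|<1/q_n^2$, the rigid rotation $R_{p_n/q_n}$ already shadows $f$ combinatorially on the circle, and the role of $\widetilde R_n$ is only to encode the fine dynamics of $f$ inside one fundamental domain of the $\Z/q_n\Z$-action.

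The core step is to produce, for each large $n$, an invariant structure on $\D$ adapted to $p_n/q_n$: a family of $q_n$ embedded closed topological disks $D^n_0,\dots,D^n_{q_n-1}$ containing $0$ in their boundaries, of small diameter and bounded geometry, cyclically permuted by $f$ in the combinatorial pattern of $R_{p_n/q_n}$ up to small $C^0$-error. Such ``pie-slice'' decompositions generalize the rational invariant circles of integrable twist maps, and it is their existence that genuinely uses the pseudo-rotation hypothesis: the absence of interior periodic orbits other than $0$ forbids the topological degenerations that would otherwise force two adjacent disks to coalesce or trap a small periodic island. Once such a quasi-equivariant decomposition is in hand, one straightens it by a symplectic $h_n$ sending each $D^n_k$ onto the standard Euclidean sector of angles $[2\pi k/q_n,2\pi(k+1)/q_n]$; the conjugate $h_n\circ f\circ h_n^{-1}$ then permutes the standard sectors according to $R_{p_n/q_n}$ modulo a small error, and averaging over $\Z/q_n\Z$ followed by a smoothing yields the strict $q_n$-th root $\widetilde R_n$. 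Conjugating back gives the desired $g_n$.

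The main obstacle is the construction of these almost-invariant $q_n$-fold symmetric disk families. In Bramham's argument this is achieved via finite-energy pseudoholomorphic foliations of the mapping torus $\D\times_f\S$ equipped with a stable Hamiltonian structure, in the spirit of the Hofer--Wysocki--Zehnder program on disk-like surfaces of section: the lack of interior periodic orbits of $f$ is precisely what prevents the holomorphic leaves from bubbling and allows the foliation to persist with $q_n$-fold combinatorics along the whole sequence of rational approximants. Replacing this pseudoholomorphic input by purely variational or generating-function techniques seems delicate, since the output genuinely consists of two-dimensional objects arranged coherently around the fixed point, and is, to my knowledge, not currently known.
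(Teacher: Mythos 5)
Your proposal does not constitute a proof: you explicitly concede that the central construction --- the $q_n$-fold quasi-equivariant family of disks --- requires pseudoholomorphic curve input, and you close by asserting that a variational or generating-function replacement ``is, to my knowledge, not currently known.'' That assertion is incorrect and is directly contradicted by the present paper, whose central premise is that a finite-dimensional proof of Bramham's theorem via generating functions was given in \cite{L2}; Section~\ref{se:goodfoliation} recalls that construction precisely in order to reuse it.

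Beyond that factual point, the architecture you propose does not match either Bramham's or Le Calvez's argument. You look for $g_n = h_n^{-1}\circ\widetilde R_n\circ h_n$ with a smooth $q_n$-th root $\widetilde R_n$ of the identity and a smooth symplectic straightening $h_n$ of a pie-slice decomposition; but neither proof produces a smooth conjugacy, and neither organizes the approximation around two-dimensional sectors. In \cite{L2} one first extends $f$ to a piecewise $C^1$ area-preserving map of the plane that is an integrable polar twist in an annulus outside $\D$, and then works in the $2mb$-dimensional space $E_b$ of broken orbits associated to a decomposition $f=f_m\circ\cdots\circ f_1$ into Lipschitz untwisted factors. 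Fixed points of $f^b$ become singularities of a Lipschitz gradient vector field $\zeta$ on $E_b$; each rational invariant circle $S_{a/b}$ of the twist extension lifts to a curve $\Sigma_a$ of singularities bounding a disk $\Delta_a\subset E_b$ that is invariant under both the gradient flow and a $\Z/b\Z$ shift $\varphi$; and the projection $q_1\colon\Delta_a\to D_{a/b}$ transports $\varphi$ to a homeomorphism $\widehat f$ of order $b$ agreeing with $f$ on $S_{a/b}$. The $C^0$-closeness of $\widehat f$ to $f$ is forced by the smallness of the flow's energy on $\Delta_a$, which is controlled by $|a-b\alpha|$ (cf.\ Lemma~\ref{le:computationaction}) together with the uniform Lipschitz bound on $\zeta$. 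The resulting finite-order maps are a priori only homeomorphisms; the paper notes that promoting them to smooth ones requires only a simple extra approximation step. Thus the basic equivariant objects are the one-dimensional gradient lines of $\zeta\vert_{\Delta_a}$, not invariant sectors, and the finite-order symmetry comes for free from the shift on a finite-dimensional variational space rather than from averaging a conjugated rotation. The gap in your proposal is therefore exactly the step you left open, and the paper's answer is the generating-function machinery you judged to be out of reach.
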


Write $\overline \alpha=\mathrm{rot}(f)$. The proof says something more precise: if $(q_n)_{n\geq 0}$ is a sequence of positive integers such that  $(q_n\overline{\alpha})_{n\geq 0}$ converges to $0$ in $\R/\Z$, then there exists a sequence  of homeomorphisms $(f_n)_{n\geq 0}$ fixing $0$ and satisfying $(f_n)^{q_n}=\mathrm{Id}$, that converges to $f$ for the $C^0$ topology (to obtain a sequence of smooth approximations one needs a simple additional argument of approximation). Theorem \ref{th:principal} would have been an easy consequence of Theorem  \ref{bramham} if the strongest following properties were true:
\begin{itemize}
\item the $f_n$ are symplectic diffeomorphisms of class $C^1$;
\item the sequence $(f_n)_{n\geq 0}$ converges to $f$ in the $C^1$ topology.
\end{itemize}
Unfortunately, there is no reason why the $f_n$ appearing in the construction of the sequence satisfy these properties. 
\medskip

The original proof of Theorem  \ref{bramham} uses pseudoholomorphic curve techniques from symplectic geometry. In \cite{L2} we succeeded to find a finite dimensional proof by using generating functions, like in Chaperon's broken geodesics method \cite{Ch}. Let us remind the ideas of this last proof. The hypothesis were slightly different, the result was stated for an irrational pseudo-rotation $f\in\mathrm {Diff}^{1}_{\omega}(\D)$ coinciding with a rotation on $\S$. This last property permits us to extend our map, also denoted $f$,  to a piecewise $C^1$ diffeomorphism of the whole plane, being an integrable polar twist map with increasing rotation number in an annulus $\{z\in\R^2\,\vert\,1\leq \vert z\vert \leq r_0\}$ and equal to an irrational rotation in  $\{z\in\R^2\,\vert \,\vert z\vert \geq r_0\}$.  Let $I$ be an identity isotopy of $f_{\vert \D}$ in $\mathrm {Diff}^1_{\omega}(\D)$ that fixes $0$. We write $\alpha=\mathrm {rot}([I]_{\S})$. One can extend $I$ to an identity isotopy $(f_t)_{t\in[0,1]}$ of $f$, also denoted $[I]$, where each $f_t$ is an integrable polar map in $\{z\in\R^2\,\vert\,1\leq \vert z\vert \leq r_0\}$ and a rotation in  $\{z\in\R^2\,\vert \, \vert z\vert \geq r_0\}$.  The circle ${\S}$ is accumulated from outside by invariant circles $S_{a/b}$, such that $[I]^b[T_{-a}]{}_{\vert S_{a/b}}\in \widetilde {\mathrm{Diff}}^1(S_{p/q})$ is the trivial homotopy. Moreover one can find a sequence $(a_n,b_n)\in \Z\times( \N\setminus\{0\})$ such that $\vert a_n -b_n\alpha\vert\leq b_n^{-1}$. The map $f$ being piecewise $C^1$, one can write it as the composition of $m$ maps ``close  to $\mathrm{Id}_{\R^2}$'', where $m\geq 1$, and then construct a $m$-periodic family $(h_i)_{i\in\Z}$ of generating real functions that are $C^1$ with Lipschitz derivatives. One knows that for every $b\geq 1$, the fixed point set of $f^b$ corresponds to the singular point set of a $A$-Lipshitz vector field $\zeta$ defined on a $2mb$-dimensional space $E_b$, furnished with a natural scalar product. This vector field is the gradient flow of a function ${\bf h}$ defined  in terms of the $h_i$, $i\in\Z$, and the constant $A$ is independent of $b$. In particular each circle $S_{a/b}\subset\R^2$ corresponds to a curve $\Sigma_{a}\subset E_b$ of singularities of $\zeta$. A fundamental result is the fact that $\Sigma_{a}$ bounds a disk $\Delta_{a}\subset E_b$ that contains the singular point corresponding to the fixed point $0$ and that is invariant by the flow and by a natural $\Z/b\Z$ action on $E_b$. Moreover the dynamics of the flow of $\zeta$ on $\Delta_{a}$ is north-south and the non trivial orbits have the same energy. This energy can be explicitly computed and is small if $a/b$ is a convergent of $\alpha$. Using the independence of the Lipshitz constant $A$, one can deduce that $\zeta$ is ``uniformly small'' on $\Delta_{a}$. The disk $\Delta_{a}$ projects homeomorphically on $D_{a/b}$ by an explicit map $q_1$. The $\Z/b\Z$ action on $\Delta_{a}$ defines, by projection on $D_{a/b}$, a homeomorphism $\widehat f$ of order $b$ that coincides with $f$ on $S_{a/b}$.  Using was has been said above, in particular the fact that $\zeta$ is small on $\Delta_{a}$, one can prove that $\widehat f$ is uniformly close to $f$ on the disk $D_{a/b}$ if $a/b$ is a convergent of $\alpha$. This is the way we prove the approximation result.

The orbits of $\zeta_{\vert \Delta_{a}}$ define a radial foliation on $\Delta_{a}$ that projects by $q_1$ onto a topological radial foliation ${\mathcal F}_1$ of $S_{a/b}$ that is invariant by $\widehat f$. It is a natural to ask wether one can compute the Calabi invariant of $f$ by using  the fact that  $\widehat f$ is arbitrarily close to $f$. The main problem is that $\widehat f$ and $\mathcal F_1$ are continuous. There is no differentiability anymore, and differentiability is crucial while dealing with Calabi invariants, regardless of the approach we choose. Nevertheless, the fact that we are dealing with Lipshitz maps close to $\mathrm{Id}$ (the maps $f_1$,\dots, $f_m$ appearing in the decomposition of $f$) permit us by to state some quantitative results. The three key-points that will used to get the proof are related to the foliation $\mathcal F_1$
:
\begin{enumerate}
\item the energy of the non trivial orbits of $\zeta_{\vert \Delta_{a}}$ is bounded by $K\vert a-b\alpha\vert$, where $K$ does not depend on $a$ and $b$;

\item the leaves of $\mathcal F_1$ are {\it Brouwer lines} of $I^bT_{-a}{}_{\vert S_{a/b}}$;

\item the {\it winding distance} between  $\mathcal F_1$ and $f^{-b}(\mathcal F_1)$ is bounded by $4mb$.

\end{enumerate}

Let us precise these three points.

 The non trivial orbits of $\zeta_{\vert \Delta_{a}}$ have the same energy. This quantity measures the area sweeped by a leaf of $\mathcal F_1$ along the isotopy $I^bT_{-a}{}_{\vert S_{a/b}}$. 
 
 By saying that the leaves of $\mathcal F_1$ are Brouwer lines, we mean that they are pushed on the left along the isotopy $I^bT_{-a}{}_{\vert S_{a/b}}$. Equivalently, we can say the following: let $\tilde{D}_{a/b}$ be the universal covering space of $D_{a/b}^*=D_{a/b}\setminus (S_{a/b}\cup\{0\})$ and $\tilde I=(\tilde f_s)_{s\in[0,1]}$ the identity isotopy on $\tilde{D}_{a/b}$ that lifts $I_{\vert D_{a/b}^*}$. Denote $\tilde{ \mathcal F_1}$ the (non singular) foliation of $\tilde{D}_{a/b}$ that lifts ${\mathcal F_1}_{\vert D^*_{a/b}}$.
Every leaf $\tilde \phi$ of $\tilde {\mathcal F_1}$ is an oriented topological line of  $\tilde{D}_{a/b}$ that separates its complement into two components, one on the left and one one the right. Let $T$ be the generator of the group of covering automorphisms, such that $T(\tilde \phi)$ is on the left of $\tilde\phi$. The assertion (2) states that $\tilde f^b\circ T^{-a}(\tilde\phi)$ is on the left of $\tilde\phi$ for every leaf $\tilde \phi$. The assertion (1) states that the area of the domain between $\tilde\phi$ and $\tilde f^b\circ T^{-a}(\tilde\phi)$, which is equal to the energy of the corresponding orbit of $\zeta$, is bounded by $K\vert a-b\alpha\vert$.

The assertion (3) says  that the foliation  $f^{-b}(\mathcal F_1)$ winds relative to  $\mathcal F_1$ no more than $4mq$. A precise definition will be given in the next section. Just say that if $\mathcal F$ is a radial foliation of class $C^1$, there exists $K>0$ such that  for every $b>0$ the winding distance between  the foliations $\mathcal F$ and $f^{-b}(\mathcal F)$ is bounded by $Kb$. The foliation that appears in our construction depends on $a$ and $b$, is not differentiable, possess interesting dynamical properties stated in (1) and (2) and nevertheless satisfies a ``differential-like'' property.

The properties (1), (2) and (3) will permit us to bound the Calabi invariant of each map $f_{\vert D_{a/b}}$. Note that we will use the third definition, as the map is only piecewise $C^1$ on $D_{a/b}$. The Calabi invariant will be small if $a/b$ is a convergent that is close to $\alpha$. By a limit process, we will prove that $\mathrm{Cal}(f_{\vert\D})=0$. 

The rest of the article is divided into three sections. 

Section 2 will be dedicated to the study of topological radial foliations defined on a disk. We will introduce different objects on the set of such foliations, in particular the winding distance between two foliations. We will see how to compute the Calabi invariant of a symplectic diffeomorphism by using such a foliation.

In section 3 we will recall the construction done in \cite{L2} and quickly explained in the introduction. In particular we will define $\mathcal F_1$ and prove the properties (1), (2) and (3) stated above. 

We will prove Theorem \ref{th:principal} in the short section 4.

I would like to thank Benoit Joly, Fr\'ed\'eric Le Roux and Sobhan Seyfaddini for so useful talks. I am particulary indepted to John Franks for the many conversations we had on this subject some years ago and would like to thank him warmly.

\section{Radial foliations and  Calabi invariant}\label{se:Calabi} 

We will introduce a formal setting to study radial foliations on a disk, based on a ``discretization of the angles'', that seems pertinent for studying topological foliations. Classical objects related to disk homeomorphisms, like rotation numbers or linking numbers, will be investigated in this formalism. One can look at  Bechara \cite{Be} for similar questions in a differentiable context.

\subsection{The space of radial foliation}
Let $D$ be an open disk of the form $D=\{z\in\C\, \vert \,\vert z\vert <r\}$, where $r>0$. We will set $D^*=D\setminus\{0\}$ and will denote $\tilde D$ the universal covering space of $D^*$. We will consider the sets
 $$W=\{(z,z')\in D^*\times D^*\,\vert\, z\not= z'\}$$ and
  $$\tilde W=\{(\tilde z, \tilde z')\in \tilde D\times\tilde D\,\vert\, \tilde z\not=\tilde z'\}.$$

A {\it radial foliation} is an oriented topological foliation on $D^*$ such that every leaf $\phi$ is a {\it ray}, meaning that it satisfies 
$$\alpha(\phi)=\{0\}, \enskip \omega(\phi)\subset \partial D=\{z\in\C\, \vert \,\vert z\vert =r\}.$$ In other words, $\phi$ tends to $0$ in the past and to $\partial D$ in the future. We denote ${\mathfrak F}$ the set of radial foliations. The group $\mathrm{Homeo}_*(D^*)$ of homeomorphisms of $D^*$ that are isotopic to the identity, acts naturally on ${\mathfrak F}$: if $\mathcal F\in\mathfrak F$ and $f\in\mathrm{Homeo}_*(D^*)$, then  the foliation $f(\mathcal F)$, whose leaves are the images by $f$ of the leaves of $\mathcal F$, is a radial foliation. Moreover $\mathrm{Homeo}_*(D^*)$ acts transitively on $\mathfrak F$.  If  $\mathrm{Homeo}_*(D^*)$ is endowed with the $C^0$ topology, meaning the compact-open topology applied to maps and the inverse maps, the stabilizer $\mathrm{Homeo}_{*,\mathcal F}(D^*)$ of $\mathcal F\in \mathfrak F$ is a closed subgroup of  $\mathrm{Homeo}_*(D^*)$. Consequently, the map
 $$\begin{aligned}\mathrm{Homeo}_*(D^*)/\mathrm{Homeo}_{*,\mathcal F}(D^*)&\to {\mathfrak F}\\ f \,\mathrm{Homeo}_{*,\mathcal F}(D^*)&\mapsto f(\mathcal F)
 \end{aligned}$$
 is bijective and ${\mathfrak F}$ can be furnished with a natural $C^0$ topology. It is the topology, induced from the quotient topology defined on $\mathrm{Homeo}_*(D^*)/\mathrm{Homeo}_{*,\mathcal F}(D^*)$ by this identification map. Note that this topology does not depend on $\mathcal F$.
 It is well known that the fondamental groups of $\mathrm{Homeo}_*(D^*)$ and $\mathrm{Homeo}_{*,\mathcal F}(D^*)$ are infinite cyclic and that the morphism $i_* :\pi_1(\mathrm{Homeo}_{*,\mathcal F}(D^*), \mathrm{Id})\to \pi_1(\mathrm{Homeo}_*(D^*), \mathrm{Id})$  induced by the inclusion map $i:  \mathrm{Homeo}_{*,\mathcal F}(D^*)\to \mathrm{Homeo}_*(D^*)$ is bijective. It implies that ${\mathfrak F}$ is simply connected. 
In the whole article, when $\mathcal F$ is a radial foliation, the notation $\tilde{\mathcal F}$ means the lift of $\mathcal F$ to the universal covering space $\tilde D$. Note that $\tilde{\mathcal F}$ is a trivial foliation. In particular, every leaf $\tilde \phi$ of $\tilde{\mathcal F}$  is an oriented line of $\tilde D$ that separates $\tilde D$ into two connected open sets, the component of  $\tilde D\setminus\tilde\phi$ lying on the left of $\tilde \phi$ will be denoted $L(\tilde \phi)$ and the component lying on the right will be denoted $R(\tilde \phi)$. We get a total order $\preceq_{\mathcal F}$ on the set of leaves of $\tilde {\mathcal F}$ as follows:
$$\tilde \phi \preceq_{\mathcal F}  \tilde \phi'\Longleftrightarrow R(\tilde\phi)\subset R(\tilde\phi').$$
We can also define a partial order $\leq_{ {\mathcal F}}$ on $\tilde D$: denote $\tilde\phi_{\tilde z}$ the leaf of $\tilde{\mathcal F}$ that contains $\tilde z$ and write $\tilde z<_{{\mathcal F}}\tilde z'$ if $\tilde z\not =\tilde z'$, if $\tilde\phi_{\tilde z}=\tilde\phi_{\tilde z'}$ and if the segment of $\tilde\phi_{\tilde z}$ beginning at $\tilde z$ and ending at $\tilde z'$ inherits the orientation of  $\tilde\phi_{\tilde z}$.

\subsection{Topological angles in the universal covering space}

Consider the natural projection
 $$\begin{aligned}\pi: \Z&\to \Z/4\Z\\
 k&\mapsto \dot k=k+4\Z.\end{aligned}$$ If $\Z/4\Z$ is endowed with the topology whose open sets are
 $$\emptyset, \{\dot 1\}, \{\dot 3\}, \{\dot 1, \dot 3\}, \{\dot 1, \dot 2, \dot 3\}, \{\dot 3, \dot 0,\dot 1\},\{\dot 0,\dot 1, \dot 2, \, \dot 3\},$$ and $\Z$ with the topology generated
 by the sets $2k+1$ and $\{2k+1, 2k, 2k+1\}$, $k\in\Z$, then $\pi$ is a covering map\footnote{This topology on $\Z$ is usually called the {\it digital line topology} on $\Z$}. Note that both sets
$\Z/4\Z$ and $\Z$ are non Hausdorff but path connected.

If $k$, $l$ are two integers, we will define
$$\lambda(k,l)=\begin{dcases}0 &\text{ if $k=l$,}\\
\#(k,l)\cap 4\Z +  (\#\{k,l\}\cap 4\Z)/2 &\text { if $k<l$,}\\
-\#(l,k)\cap 4\Z - (\#\{l,k\}\cap 4\Z)/2 &\text{ if $k>l$.}\end{dcases}$$
 Note that for every integers $k, l, m$ we have $$\lambda(k,l)+\lambda(l,m)=\lambda(k,m).$$
The quantity $\lambda(k,l)$ measures the ``algebraic intersection number'' of a continuous path $\gamma: [s_0,s_1]\to \Z/4\Z$ with $\{\dot 0\}$, if $\gamma$ is lifted to a path $\hat\gamma: [s_0,s_1]\to\Z$ joining $k$ to $l$.

For every $(\tilde z, \tilde z')\in \tilde W$, we can define a continuous function $\theta_{\tilde z, \tilde z'}: {\mathfrak F}\to \Z/4\Z$ as follows:
$$\theta_{\tilde z, \tilde z'}( \mathcal F)= \begin{cases} \dot 0 &\text{ if $\tilde z'>_{\mathcal F}\tilde z$,}\\
\dot 1 &\text{ if $\phi_{\tilde z'} \succ_{\mathcal F} \phi_{\tilde z}$,}\\
 \dot 2 &\text{ if  $\tilde z'<_{\mathcal F}\tilde z'$,}\\
 \dot  3 &\text{ if $\phi_{\tilde z'} \prec_{\mathcal F} \phi_{\tilde z}$.}\end{cases}$$

 The space $\mathfrak F$ being simply connected, the Lifting Theorem asserts that there exists a continuous function, 
$\hat\theta_{\tilde z, \tilde z'}: {\mathfrak F}\to \Z$, uniquely defined up to an additive constant in $4\Z$, such that $\pi\circ \hat\theta_{\tilde z, \tilde z'}= \theta_{\tilde z, \tilde z'}.$

Note that
\begin{itemize}

\item $\theta_{\tilde z', \tilde z}( \mathcal F) =\theta_{\tilde z, \tilde z'}( \mathcal F)+\dot 2$, for every $\mathcal F\in \mathfrak F$,

\item $\hat\theta_{\tilde z, \tilde z'}$ and $\hat\theta_{\tilde z', \tilde z}$ can be chosen such that $\hat\theta_{\tilde z', \tilde z}( \mathcal F) =\hat\theta_{\tilde z, \tilde z'}( \mathcal F)+2$ for every $\mathcal F\in \mathfrak F$,

\item  $\theta_{\tilde f(\tilde z), \tilde f(\tilde z')}( f(\mathcal F)) =\theta_{\tilde z, \tilde z'}( \mathcal F) $, for every $\mathcal F\in \mathfrak F$ and every $f\in\mathrm{Homeo}_*(D^*)$,

\item $\hat\theta_{\tilde f(\tilde z), \tilde f(\tilde z')}$ and $\hat\theta_{\tilde z, \tilde z'}$ can be chosen such that $\hat\theta_{\tilde f(\tilde z), \tilde f(\tilde z')}( f(\mathcal F)) =\hat\theta_{\tilde z, \tilde z'}( \mathcal F) $, for every $\mathcal F\in \mathfrak F$.

\end{itemize}

In particular, if $\mathcal F$ and $\mathcal F$ are two radial foliations, the numbers
$$ \hat\tau(\tilde z, \tilde z', \mathcal F, \mathcal F') =\hat\theta_{\tilde z, \tilde z'}( \mathcal F')- \hat \theta_{\tilde z, \tilde z'}( \mathcal F)$$
and
$$\lambda(\tilde z,\tilde z', \mathcal F, \mathcal F') = \lambda(\hat \theta_{\tilde z, \tilde z'}( \mathcal F), \hat \theta_{\tilde z, \tilde z'}( \mathcal F '))$$ do not depend on the choice of the lift $\hat \theta$.

Suppose that $(\tilde z, \tilde z')\in\tilde W$, that $f\in\mathrm{Homeo}_*({D^*})$,  that $\tilde f$ is a lift of $f$ and that $\mathcal F$, $\mathcal F'$, $\mathcal F''$ belong to $\mathfrak F$.  The following results are immediate:
\begin{itemize}

\item $\vert \lambda(\tilde z,\tilde z', \mathcal F, \mathcal F')\vert \leq \vert\hat\tau(\tilde z, \tilde z', \mathcal F, \mathcal F')\vert$,

\item  $\hat\tau(\tilde z', \tilde z, \mathcal F, \mathcal F') =  \hat\tau(\tilde z, \tilde z', \mathcal F, \mathcal F')$,

\item $\hat\tau(\tilde z,\tilde z', \mathcal F, \mathcal F')+\hat\tau(\tilde z,\tilde z', \mathcal F', \mathcal F'')=\hat\tau(\tilde z,\tilde z', \mathcal F, \mathcal F'')$,

\item $\lambda(\tilde z,\tilde z', \mathcal F, \mathcal F')+\lambda(\tilde z,\tilde z', \mathcal F', \mathcal F'')=\lambda(\tilde z,\tilde z', \mathcal F, \mathcal F'')$,

\item $ \hat\tau(\tilde z, \tilde z',  \mathcal F',  \mathcal F) =  -\hat\tau(\tilde z, \tilde z', \mathcal F, \mathcal F')$,

\item  $\lambda(\tilde z, \tilde z',  \mathcal F',  \mathcal F) =  -\lambda(\tilde z, \tilde z', \mathcal F, \mathcal F')$,

\item  $\hat\tau(\tilde f(\tilde z), \tilde f(\tilde z'),  f(\mathcal F),  f(\mathcal F')) =  \hat\tau(\tilde z, \tilde z', \mathcal F, \mathcal F')$,
\item  $\lambda(\tilde f(\tilde z), \tilde f(\tilde z'), f(\mathcal F),  f(\mathcal F')) =  \lambda(\tilde z, \tilde z', \mathcal F, \mathcal F')$.

\end{itemize}

\bigskip
The second assertion indicates that $(\tilde z,\tilde z')\mapsto \hat\tau(\tilde z', \tilde z, \mathcal F, \mathcal F')$ is symmetric. The next result precises the lack of symmetry of $(\tilde z,\tilde z')\mapsto \lambda(\tilde z', \tilde z, \mathcal F, \mathcal F')$. For every $(\tilde z, \tilde z')\in \tilde W$, define a function $\delta_{\tilde z, \tilde z'}: {\mathfrak F}\to \{-1/2,0, 1/2\}$ as follows:

$$\delta_{\tilde z, \tilde z'}(\mathcal F)=\begin{cases}0&\text{ if $\tilde \phi_{\tilde z'} = \tilde \phi_{\tilde z}$,}\\
1/2 &\text{ if $\tilde \phi_{\tilde z'}\succ_{\mathcal F} \phi_{\tilde z}$,}\\
  -1/2 &\text{ if $\tilde \phi_{\tilde z'} \prec_{\mathcal F}  \phi_{\tilde z}$.}\end{cases}$$
The next result express that $ \lambda(\tilde z, \tilde z', \mathcal F, \mathcal F')- \lambda(\tilde z', \tilde z, \mathcal F, \mathcal F')$ is the ``algebraic intersection number'' of a ``well-oriented'' continuous path $\gamma: [s_0,s_1]\to \Z/4\Z$  joining $\hat\theta_{\tilde z', \tilde z}( \mathcal F)$ to $\hat\theta_{\tilde z, \tilde z'}( \mathcal F')$ with $\{\dot 0\} -\{\dot 2\}$. 

\begin{lemma}\label{le:symmetry} For every  $(\tilde z, \tilde z')\in\tilde W$ and every $\mathcal F$, $\mathcal F'$ in $\mathfrak F$, it holds that
$$ \lambda(\tilde z, \tilde z', \mathcal F, \mathcal F')- \lambda(\tilde z', \tilde z, \mathcal F, \mathcal F')=\delta_{\tilde z,\tilde z'}(\mathcal F')- \delta_{\tilde z, \tilde z'}(\mathcal F) .$$
\end{lemma}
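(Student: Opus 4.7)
The plan is to translate the lemma into a purely combinatorial identity about the integer-valued lifted angles and then verify it by telescoping plus a four-case check. Using the identity $\hat\theta_{\tilde z', \tilde z}(\mathcal G) = \hat\theta_{\tilde z, \tilde z'}(\mathcal G) + 2$ (one of the listed compatibility relations between the lifts), set $\alpha = \hat\theta_{\tilde z, \tilde z'}(\mathcal F)$ and $\alpha' = \hat\theta_{\tilde z, \tilde z'}(\mathcal F')$; then $\lambda(\tilde z, \tilde z', \mathcal F, \mathcal F') = \lambda(\alpha, \alpha')$ and $\lambda(\tilde z', \tilde z, \mathcal F, \mathcal F') = \lambda(\alpha+2, \alpha'+2)$. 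Moreover, $\delta_{\tilde z, \tilde z'}(\mathcal F)$ depends only on $\theta_{\tilde z, \tilde z'}(\mathcal F) \in \Z/4\Z$, taking the value $0$ at $\dot 0$ and $\dot 2$, the value $1/2$ at $\dot 1$, and the value $-1/2$ at $\dot 3$. Writing $\delta(k)$ for the corresponding function of $k \bmod 4$, the lemma reduces to
\[
\lambda(\alpha, \alpha') - \lambda(\alpha + 2, \alpha' + 2) \;=\; \delta(\alpha') - \delta(\alpha) \qquad (\alpha, \alpha' \in \Z).
\]

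I would then exploit the cocycle property $\lambda(k,l) + \lambda(l,m) = \lambda(k,m)$ to telescope both sides. Assuming $\alpha \leq \alpha'$ (the reversed case is symmetric by antisymmetry of $\lambda$),
\[
\lambda(\alpha, \alpha') - \lambda(\alpha+2, \alpha'+2) \;=\; \sum_{j=\alpha}^{\alpha'-1}\Big(\lambda(j, j+1) - \lambda(j+2, j+3)\Big),
\]
and similarly $\delta(\alpha') - \delta(\alpha) = \sum_{j=\alpha}^{\alpha'-1}\big(\delta(j+1) - \delta(j)\big)$. It therefore suffices to prove the one-step identity $\lambda(j, j+1) - \lambda(j+2, j+3) = \delta(j+1) - \delta(j)$ for every $j \in \Z$.

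This one-step identity is settled by inspecting the four residue classes of $j$ modulo $4$. Each term $\lambda(j, j+1)$ is read directly off the definition of $\lambda$ --- it equals $1/2$ if exactly one of $j, j+1$ lies in $4\Z$, and is $0$ otherwise --- and similarly for $\lambda(j+2, j+3)$; the jumps of $\delta$ are read off the table. A direct check produces the common values $+1/2, -1/2, -1/2, +1/2$ on both sides, for $j \equiv 0, 1, 2, 3 \pmod 4$ respectively. I do not anticipate any real obstacle here; conceptually, the identity just records the interpretation announced immediately before the lemma, namely that $\lambda(\tilde z, \tilde z', \mathcal F, \mathcal F') - \lambda(\tilde z', \tilde z, \mathcal F, \mathcal F')$ is the algebraic intersection number, with the ``divisor'' $\{\dot 0\} - \{\dot 2\}$, of a lifted path joining $\hat\theta_{\tilde z, \tilde z'}(\mathcal F)$ to $\hat\theta_{\tilde z, \tilde z'}(\mathcal F')$ in $\Z$, whose associated potential on $\Z/4\Z$ is precisely $\delta$.
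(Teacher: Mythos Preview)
Your proof is correct and starts from the same reduction as the paper: using the compatibility $\hat\theta_{\tilde z',\tilde z}=\hat\theta_{\tilde z,\tilde z'}+2$ to rewrite the left side as $\lambda(\alpha,\alpha')-\lambda(\alpha+2,\alpha'+2)$, and observing that $\delta$ depends only on $\theta\in\Z/4\Z$. The difference is in how the resulting combinatorial identity is verified. The paper simply tabulates $\lambda(\tilde z,\tilde z',\mathcal F,\mathcal F')-\lambda(\tilde z',\tilde z,\mathcal F,\mathcal F')$ over all pairs $(\theta_{\tilde z,\tilde z'}(\mathcal F),\theta_{\tilde z,\tilde z'}(\mathcal F'))\in(\Z/4\Z)^2$ and checks it against $\delta(\mathcal F')-\delta(\mathcal F)$ in thirteen listed cases; you instead telescope via the cocycle relation $\lambda(k,l)+\lambda(l,m)=\lambda(k,m)$ to reduce to the one-step identity $\lambda(j,j+1)-\lambda(j+2,j+3)=\delta(j+1)-\delta(j)$, which needs only four cases. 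Your route is a little tidier and makes transparent \emph{why} the difference depends only on the residues mod $4$, a point the paper leaves implicit.
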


\begin{proof}Recall that  $$ \lambda(\tilde z, \tilde z', \mathcal F, \mathcal F')= \lambda(\hat \theta_{\tilde z, \tilde z'}( \mathcal F), \hat \theta_{\tilde z, \tilde z'}( \mathcal F '))$$ and that one can suppose that $$\hat\theta_{\tilde z', \tilde z}( \mathcal F) =\hat\theta_{\tilde z, \tilde z'}( \mathcal F)+2, \enskip \hat\theta_{\tilde z', \tilde z}( \mathcal F') =\hat\theta_{\tilde z, \tilde z'}( \mathcal F')+2.$$ Now observe that
$$ \lambda(\tilde z, \tilde z', \mathcal F, \mathcal F')- \lambda(\tilde z', \tilde z, \mathcal F, \mathcal F')=\begin{cases}0&\text{ if $\theta_{\tilde z, \tilde z'} ( \mathcal F) = \theta_{\tilde z, \tilde z'} (\mathcal F')$,} \\
0&\text{ if $\theta_{\tilde z, \tilde z'} (\mathcal F)= \dot 0$ and  $\theta_{\tilde z, \tilde z'} (\mathcal F')=\dot 2$,}\\
0&\text{ if $\theta_{\tilde z, \tilde z'} (\mathcal F)= \dot 2$ and  $\theta_{\tilde z, \tilde z'} (\mathcal F')=\dot 0$,}\\
-1&\text{ if $\theta_{\tilde z, \tilde z'} (\mathcal F) = \dot 1$ and  $\theta_{\tilde z, \tilde z'} (\mathcal F')=\dot 3$,} \\
1&\text{ if $\theta_{\tilde z, \tilde z'} (\mathcal F) = \dot 3$ and  $\theta_{\tilde z, \tilde z'} (\mathcal F')=\dot 1$,} \\
-1/2&\text{ if $\theta_{\tilde z, \tilde z'} (\mathcal F) = \dot 1$ and  $\theta_{\tilde z, \tilde z'} (\mathcal F')=\dot 0$,} \\
1/2&\text{ if $\theta_{\tilde z, \tilde z'} (\mathcal F) = \dot 0$ and  $\theta_{\tilde z, \tilde z'} (\mathcal F')=\dot 1$,} \\
-1/2&\text{ if $\theta_{\tilde z, \tilde z'} (\mathcal F)= \dot 0$ and  $\theta_{\tilde z, \tilde z'} (\mathcal F')=\dot 3$,} \\
1/2&\text{ if $\theta_{\tilde z, \tilde z'} (\mathcal F) = \dot 3$ and  $\theta_{\tilde z, \tilde z'} (\mathcal F')=\dot 0$,} \\
-1/2&\text{ if $\theta_{\tilde z, \tilde z'} (\mathcal F) = \dot 1$ and  $\theta_{\tilde z, \tilde z'} (\mathcal F')=\dot 2$,} \\
1/2&\text{ if $\theta_{\tilde z, \tilde z'} (\mathcal F) = \dot 2$ and  $\theta_{\tilde z, \tilde z'} (\mathcal F')=\dot 1$,} \\
-1/2&\text{ if $\theta_{\tilde z, \tilde z'} (\mathcal F)= \dot 2$ and  $\theta_{\tilde z, \tilde z'} (\mathcal F')=\dot 3$,}\\
1/2&\text{ if $\theta_{\tilde z, \tilde z'} (\mathcal F) = \dot 3$ and  $\theta_{\tilde z, \tilde z'} (\mathcal F')=\dot 2$.} \end{cases}
$$\end{proof}

The next result will be useful later:

\begin{lemma} \label{le:changeoffoliation} For every compact set $\tilde K\subset \tilde W$ and every  $\mathcal F, \mathcal F'$ in $\mathfrak F$, there exists $M>0$ such that for every $(\tilde z,\tilde z')\in \tilde K$  it holds that 
 $$ \vert \hat\tau (\tilde z,\tilde z',\mathcal F, \mathcal F')\vert \leq M.$$
\end{lemma}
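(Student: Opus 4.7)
The plan is to realize $\hat\tau(\tilde z,\tilde z',\mathcal F,\mathcal F')$ as the endpoint difference of a joint continuous lift along a fixed path from $\mathcal F$ to $\mathcal F'$ in $\mathfrak F$, and then to exploit the elementary fact that compact subsets of $\Z$ in the digital line topology are bounded. Since $\mathrm{Homeo}_*(D^*)$ is path connected and acts transitively on $\mathfrak F$, I would begin by choosing a continuous path $(f_s)_{s\in[0,1]}$ in $\mathrm{Homeo}_*(D^*)$ with $f_0=\mathrm{Id}$ and $f_1(\mathcal F)=\mathcal F'$, and setting $\mathcal F_s=f_s(\mathcal F)$. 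The map
$$\Phi:[0,1]\times\tilde W\to\Z/4\Z,\qquad \Phi(s,\tilde z,\tilde z')=\theta_{\tilde z,\tilde z'}(\mathcal F_s),$$
is jointly continuous, after a routine verification from the definition of $\theta$ that $\Phi^{-1}(\dot 0)$ and $\Phi^{-1}(\dot 2)$ are closed in $[0,1]\times\tilde W$ (the other preimages follow from these). By the very definition of $\hat\theta$ and by the path-lifting property of the covering $\pi:\Z\to\Z/4\Z$, for every $(\tilde z,\tilde z')$ the quantity $\hat\tau(\tilde z,\tilde z',\mathcal F,\mathcal F')$ equals $\hat\Phi(1,\tilde z,\tilde z')-\hat\Phi(0,\tilde z,\tilde z')$ for any continuous lift $\hat\Phi(\cdot,\tilde z,\tilde z'):[0,1]\to\Z$ of the fiber path $s\mapsto\Phi(s,\tilde z,\tilde z')$.

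Next, for any point $(\tilde z_0,\tilde z_0')\in\tilde W$, I would pick a simply connected, locally path connected open neighborhood $U\subset\tilde W$ (for instance the product of two small disjoint open Euclidean balls), so that $[0,1]\times U$ is itself simply connected and locally path connected. The covering space lifting theorem then produces a continuous joint lift $\hat\Phi:[0,1]\times U\to\Z$ of $\Phi|_{[0,1]\times U}$. For any compact neighborhood $\overline V\subset U$ of $(\tilde z_0,\tilde z_0')$, the image $\hat\Phi([0,1]\times\overline V)$ is compact in $\Z$. Since the sets $\{-2n-1,\dots,2n+1\}$ are open and form an increasing cover of $\Z$ in the digital line topology, this compact image is contained in some finite $\{-N,\dots,N\}$, and therefore
$$|\hat\tau(\tilde z,\tilde z',\mathcal F,\mathcal F')|\leq 2N\qquad\text{for all }(\tilde z,\tilde z')\in\overline V.$$

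Finally, covering the compact set $\tilde K$ by finitely many such compact neighborhoods $\overline V_1,\dots,\overline V_p$ and taking $M$ to be the maximum of the corresponding bounds yields the conclusion. The principal obstacle, requiring the most care, is the correct manipulation of the non-Hausdorff topology on $\Z/4\Z$ and $\Z$, specifically verifying the continuity of $\Phi$ and the applicability of the lifting theorem in this setting; once one is at ease with this digital line formalism, the remainder of the proof reduces to a standard compactness argument.
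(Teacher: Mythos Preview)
Your proposal is correct and follows essentially the same strategy as the paper's proof: reduce to a product of small disks, lift the $\Z/4\Z$-valued angle map over a simply connected domain, and deduce boundedness from compactness. The only cosmetic difference is that the paper lifts over $\tilde d\times\tilde d'\times\mathfrak F$ (invoking the simple connectedness of all of $\mathfrak F$) while you fix a path $(\mathcal F_s)_{s\in[0,1]}$ and lift over $[0,1]\times U$; both choices yield the same conclusion by the same mechanism.
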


\begin{proof} For every $(\tilde z,\tilde z')\in \tilde W$, there exists a neighborhood $\tilde d_{\tilde z,\tilde z'}\times \tilde d'_{\tilde z,\tilde z'}\subset \tilde W$ of $(\tilde z,\tilde z')$ where $\tilde d_{\tilde z,\tilde z'}$ and $\tilde d'_{\tilde z,\tilde z'}$ are topological closed disks. One can cover $\tilde K$ by a finite family of such neighborhoods. So it is sufficient to prove the result when $\tilde K=\tilde d\times \tilde d'$ is the product of two topological closed disks. 
The map
$$\begin{aligned} \tilde d\times \tilde d'\times {\mathfrak F}&\to \Z/4\Z\\
(\tilde z, \tilde z', \mathcal F)&\to\theta_{\tilde z, \tilde z'}(\mathcal F)\end{aligned}$$ 
being continuous and the space $\tilde d\times \tilde d'\times {\mathfrak F}$ being simply connected, one can lift this last map to a continuous map
$$\begin{aligned} \tilde d\times \tilde d'\times {\mathfrak F}&\to \Z\\
(\tilde z, \tilde z', \mathcal F)&\to\hat\theta_{\tilde z, \tilde z'}(\mathcal F).\end{aligned}$$ 
For every $\mathcal F, \mathcal F'$ in $\mathfrak F$, the function
$(\tilde z,\tilde z')\mapsto \hat\theta_{\tilde z, \tilde z'}(\mathcal F')- \hat\theta_{\tilde z, \tilde z'}(\mathcal F)= \tau (\tilde z, \tilde z', \mathcal F, \mathcal F')$ is continuous and so is bounded on the compact set $\tilde d\times \tilde d'$. This last affirmation comes from the fact that for every $l\geq 1$, the set
$\left\{ (\tilde z,\tilde z')\in \tilde d\times \tilde d'\, \vert \enskip\vert \tau (\tilde z, \tilde z', \mathcal F, \mathcal F')\vert <2l\right\}$ is an open subset of $\tilde d\times \tilde d'$ as the preimage of an open set by a continuous map. 
\end{proof}

Let us conclude with the following:

\begin{lemma}\label{le:distance}The function $d:{\mathfrak F}\times {\mathfrak F}\to \N\cup\{+\infty\}$ is an extended distance, where 
$$d(\mathcal F, \mathcal F')= \sup_{(\tilde z, \tilde z')\in \tilde W} \vert  \hat\tau(\tilde z, \tilde z', \mathcal F, {\mathcal F}') \vert.$$
Moreover the equality $d(f(\mathcal F), f(\mathcal F'))= d(\mathcal F, \mathcal F')$ holds for every $f\in\mathrm{Homeo}_*({D^*})$.\end{lemma}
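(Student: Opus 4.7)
The strategy is to verify each axiom of an extended distance and the invariance by directly invoking the algebraic properties of $\hat\tau$ collected in the bullet list just before the lemma. Since $\hat\tau$ is integer-valued (being a difference of integer-valued lifts of $\theta$) and does not depend on the choice of those lifts, the supremum $d(\mathcal F, \mathcal F')$ naturally belongs to $\N\cup\{+\infty\}$, and non-negativity is automatic.

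Symmetry of $d$ follows immediately from the identity $\hat\tau(\tilde z, \tilde z', \mathcal F', \mathcal F) = -\hat\tau(\tilde z, \tilde z', \mathcal F, \mathcal F')$, which preserves absolute values. For the triangle inequality I would apply the Chasles-type relation $\hat\tau(\tilde z, \tilde z', \mathcal F, \mathcal F'') = \hat\tau(\tilde z, \tilde z', \mathcal F, \mathcal F') + \hat\tau(\tilde z, \tilde z', \mathcal F', \mathcal F'')$ pointwise, bound each term by the corresponding supremum, and then take the sup of the left-hand side over $(\tilde z, \tilde z') \in \tilde W$; the convention $a + \infty = \infty$ takes care of the extended-valued case.

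The main substantive step is the separation property $d(\mathcal F, \mathcal F') = 0 \iff \mathcal F = \mathcal F'$. The forward direction is trivial: if $\mathcal F = \mathcal F'$, then choosing a common lift forces $\hat\tau \equiv 0$. For the converse, $d(\mathcal F, \mathcal F') = 0$ implies that $\hat\tau$ vanishes on all of $\tilde W$, so projecting by $\pi: \Z \to \Z/4\Z$ gives $\theta_{\tilde z, \tilde z'}(\mathcal F) = \theta_{\tilde z, \tilde z'}(\mathcal F')$ for every $(\tilde z, \tilde z')$. Reading off the definition of $\theta$, whether its value lies in $\{\dot 0, \dot 2\}$ or in $\{\dot 1, \dot 3\}$ detects whether two points of $\tilde D$ share a leaf of the lifted foliation; the distinction $\dot 0$ versus $\dot 2$ detects the orientation along a common leaf; and the distinction $\dot 1$ versus $\dot 3$ detects the order $\preceq$ between two distinct leaves. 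Since all three pieces of data agree for $\tilde{\mathcal F}$ and $\tilde{\mathcal F}'$, the two lifted foliations coincide as oriented foliations, whence $\mathcal F = \mathcal F'$.

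Finally, the invariance under $f \in \mathrm{Homeo}_*(D^*)$ is a change-of-variables argument: any lift $\tilde f$ of $f$ is a homeomorphism of $\tilde D$, so $(\tilde z, \tilde z') \mapsto (\tilde f(\tilde z), \tilde f(\tilde z'))$ is a bijection of $\tilde W$; combined with the equivariance
\[ \hat\tau(\tilde f(\tilde z), \tilde f(\tilde z'), f(\mathcal F), f(\mathcal F')) = \hat\tau(\tilde z, \tilde z', \mathcal F, \mathcal F'), \]
taking the supremum of the absolute value yields $d(f(\mathcal F), f(\mathcal F')) = d(\mathcal F, \mathcal F')$. I expect the separation axiom to be the only step requiring any genuine thought; the remaining pieces are direct transcriptions of the bullet-point properties of $\hat\tau$.
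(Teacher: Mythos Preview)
Your proposal is correct and follows essentially the same approach as the paper's proof: symmetry, the triangle inequality, and invariance are read off directly from the listed identities for $\hat\tau$, while the separation axiom is handled via the parity of $\theta$ (values in $\{\dot 0,\dot 2\}$ versus $\{\dot 1,\dot 3\}$). The only cosmetic difference is that you argue the separation step in the contrapositive direction (global agreement of $\theta$ forces $\mathcal F=\mathcal F'$), whereas the paper exhibits a single pair $(\tilde z,\tilde z')$ lying on a common leaf of $\widetilde{\mathcal F}$ but on distinct leaves of $\widetilde{\mathcal F'}$ to force $\theta_{\tilde z,\tilde z'}(\mathcal F)\neq\theta_{\tilde z,\tilde z'}(\mathcal F')$.
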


\begin{proof} One proves immediately that $d$ is symmetric and  satisfies the triangular inequality. The fact that $d(\mathcal F, \mathcal F)=0$ for every $\mathcal F\in \mathfrak F$ is also obvious. It remains to prove that  $d(\mathcal F, \mathcal F')\not=0$ if $\mathcal F$ and $\mathcal F'$ are distinct radial foliations. In that case, one can find $(\tilde z, \tilde z')\in \tilde W$ such that $\tilde z$ and  $\tilde z'$ belong to the same leaf of $\widetilde{\mathcal F}$ and to different leaves of $\widetilde{\mathcal F'}$. Consequently, $\theta (\tilde z, \tilde z', \mathcal F)\in \{\dot 0, \dot 2\}$ and $\theta (\tilde z, \tilde z', \mathcal F')\in \{\dot 1, \dot 3\}$. It implies that 
  $\hat\theta (\tilde z, \tilde z', \mathcal F)\not=\hat\theta (\tilde z, \tilde z', \mathcal F')$ if  $\hat\theta$ is a lift of  $\theta$. 

The equality $d(\tilde f(\mathcal F), \tilde f(\mathcal F'))= d(\mathcal F, \mathcal F')$ if $f\in\mathrm{Homeo}_*({D^*})$ is a direct consequence of the equality $$\hat\tau(\tilde f(\tilde z), \tilde f(\tilde z'),  f(\mathcal F),  f(\mathcal F')) =  \hat\tau(\tilde z, \tilde z', \mathcal F, \mathcal F').$$  \end{proof}

  We will call $d$ the {\it winding distance} between two radial foliations.

\subsection{ Topological angles in the annulus}

Let $\mathcal F$ be a radial foliation and $f$ a homeomorphism of $D^*$ isotopic to the identity. Let $I=(f_s)_{s\in[0,1]}$ be an identity isotopy of $f$ in $\mathrm{Homeo}_*(D^*)$ and $\tilde I=(\tilde f_s)_{s\in[0,1]}$ the lifted identity isotopy to $\tilde D$.  
The function
$$s\in[0,1]\mapsto \theta_{\tilde f_s(\tilde z), \tilde f_s(\tilde z')} (\mathcal F)=\theta_{\tilde z,\tilde z'} ( f_s^{-1} (\mathcal F))\in \Z/4\Z$$ can be lifted to a function  
$$s\in[0,1]\mapsto \hat\theta_{\tilde z,\tilde z'} ( f_s^{-1} (\mathcal F))\in\Z$$ and the difference between the value at $1$ and the value at $0$ of this last map is nothing but $\hat \tau (\tilde z, \tilde z, {\mathcal F}, f^{-1}(\mathcal F))$. Of course it  depends neither on the choice of $I$, nor on the choice of the lift $\tilde I$ of $I$. Suppose now that $\tilde z$ and $\tilde z'$ project onto two different points of $D^*$. Note that if $k$ is large enough, then for every $s\in[0,1]$, it holds that
$$\theta_{\tilde f_s(\tilde z), \tilde f_s(T^k(\tilde z'))} (\mathcal F)=\dot 1, \enskip \theta_{\tilde f_s(\tilde z), \tilde f_s(T^{-k}(\tilde z'))} (\mathcal F)=\dot 3,$$ and so the functions
$$ s\mapsto \hat\theta_{ \tilde z,  T^k(\tilde z')} (f_s^{-1}(\mathcal F)), \enskip  s\mapsto \hat \theta_{\tilde z, T^{-k}(\tilde z')}(f_s^{-1}(\mathcal F))$$ are constant,  which means that 
$$\hat \tau (\tilde z, T^k(\tilde z'), \mathcal F,f^{-1}( {\mathcal F}) )=\hat \tau (\tilde z, T^{-k}(\tilde z'), \mathcal F, f^{-1}({\mathcal F})) =0.$$
Consequently, if $\mathcal F$ and $\mathcal F'$ are two radial foliations, then for every $(z,z')\in W$, one can define
$$\begin{aligned} \overline{\hat \tau} (z, z', \mathcal F, {\mathcal F}') &=\sum_{k\in\Z} \vert \hat \tau (\tilde z, T^k(\tilde z'), \mathcal F, {\mathcal F}') \vert,\\
 \hat \tau (z, z', \mathcal F, {\mathcal F}') &=\sum_{k\in\Z}\hat \tau (\tilde z, T^k(\tilde z'), \mathcal F, {\mathcal F}'), \\
\lambda(z, z', \mathcal F, {\mathcal F}') &=\sum_{k\in\Z} \lambda(\tilde z, T^k(\tilde z'), \mathcal F, {\mathcal F}') ,\end{aligned}$$
each sum being independent of the choice of the lifts $\tilde z, \tilde z'$ of $z, z'$. 

Suppose that $(z, z')\in W$, that $f\in\mathrm{Homeo}_*({D^*})$ and that $\mathcal F$, $\mathcal F'$, $\mathcal F''$ belong to $\mathfrak F$.  The following results are immediate

\begin{itemize}

\item  $\vert \hat\tau(z,  z',  \mathcal F',  \mathcal F)\vert \leq  \overline{\hat \tau}( z,  z', \mathcal F, \mathcal F')$,

\item  $\vert \lambda(z,  z',  \mathcal F',  \mathcal F)\vert \leq  \overline{\hat \tau}( z,  z', \mathcal F, \mathcal F')$,

\item  $\overline{\hat\tau}(z', z, \mathcal F, \mathcal F') = \overline{ \hat\tau}(z, z', \mathcal F, \mathcal F')$,

\item  $\hat\tau(z', z, \mathcal F, \mathcal F') =  \hat\tau(z, z', \mathcal F, \mathcal F')$,

\item  $\overline{\hat\tau}(z, z', \mathcal F', \mathcal F) =  \overline{\hat\tau}(z, z', \mathcal F, \mathcal F')$,

\item  $\hat\tau(z,  z',  \mathcal F,  \mathcal F')+\hat\tau( z, z', \mathcal F', \mathcal F'')= \hat\tau(z,  z',  \mathcal F,  \mathcal F'')$,

\item  $\lambda(z,  z',  \mathcal F,  \mathcal F')+\lambda(z,  z', \mathcal F', \mathcal F'')= \lambda(z,  z',  \mathcal F,  \mathcal F'')$,

\item  $\hat\tau(z, z', \mathcal F', \mathcal F) =  -\hat\tau(z, z', \mathcal F, \mathcal F')$,

\item  $\lambda(z,  z',  \mathcal F',  \mathcal F) =  -\lambda(z, z', \mathcal F, \mathcal F')$,

\item  $\hat\tau(f(z), f(z'),  f(\mathcal F),  f(\mathcal F')) =  \hat\tau(z, z', \mathcal F, \mathcal F')$,

\item  $\lambda( f(z), f(z'), f(\mathcal F),  f(\mathcal F')) =  \lambda(z,  z', \mathcal F, \mathcal F')$.

\end{itemize}

Note that we have the following:

\begin{lemma}\label{le:secondchangeoffoliation}  For every compact subset $ K\subset W$, for every  $\mathcal F, \mathcal F'$ in $\mathfrak F$, there exists $M>0$ such that for every $(z,  z')\in K$  it holds that 
 $$ \overline {\hat\tau} (z, z',\mathcal F, \mathcal F') \leq M.$$
\end{lemma}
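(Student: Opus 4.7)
The plan is to reduce the sum defining $\overline{\hat\tau}$ to a finite one and then apply Lemma~\ref{le:changeoffoliation} termwise. Let $K_1,K_2\subset D^*$ be the two coordinate projections of $K$; these are compact, so one can pick a compact set $\tilde K_1\subset \tilde D$ projecting onto $K_1\cup K_2$. For each $(z,z')\in K$ choose lifts $\tilde z,\tilde z'\in \tilde K_1$; the value of $\overline{\hat\tau}(z,z',\mathcal F,\mathcal F')$ does not depend on this choice. The crux is to prove there exists an integer $N\geq 0$, depending only on $\mathcal F$, $\mathcal F'$ and $K$, such that
$$\hat\tau(\tilde z, T^k(\tilde z'), \mathcal F, \mathcal F') = 0\quad\text{for every } |k| \geq N \text{ and every } \tilde z,\tilde z'\in \tilde K_1.$$
Once this is established the sum has at most $2N-1$ nonzero terms, and Lemma~\ref{le:changeoffoliation} applied to each of the compact sets
$$\tilde K^{(k)} = \{(\tilde z, T^k(\tilde z')) : \tilde z,\tilde z'\in \tilde K_1,\ (\pi(\tilde z),\pi(\tilde z'))\in K\}\subset \tilde W,\quad |k|<N,$$
(the inclusion in $\tilde W$ holds because $z\not= z'$ forces $\tilde z\not= T^k(\tilde z')$) yields a uniform bound $M_k$ on $|\hat\tau|$, and $M=\sum_{|k|<N}M_k$ then does the job.

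To establish the vanishing for large $|k|$, I would use transitivity and path-connectedness of $\mathrm{Homeo}_*(D^*)$ to pick an isotopy $(h_t)_{t\in[0,1]}$ in $\mathrm{Homeo}_*(D^*)$ with $h_0=\mathrm{Id}$ and $h_1(\mathcal F)=\mathcal F'$, and set $\mathcal G_t=h_t(\mathcal F)$. Lift $(h_t)$ to $(\tilde h_t)$ with $\tilde h_0=\mathrm{Id}$; each $\tilde h_t$ commutes with $T$, so $\psi_t=\tilde h_t^{-1}$ is $T$-equivariant and sends the leaf of $\tilde{\mathcal G_t}$ through $\tilde w$ to the leaf of $\tilde{\mathcal F}$ through $\psi_t(\tilde w)$. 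Choose a continuous leaf coordinate $\alpha_{\mathcal F}:\tilde D\to\R$ for $\tilde{\mathcal F}$ normalised so that $\alpha_{\mathcal F}\circ T=\alpha_{\mathcal F}+1$. By continuity of $\psi$ and compactness of $[0,1]\times\tilde K_1$, the set $\psi([0,1]\times\tilde K_1)$ is compact in $\tilde D$, so $\alpha_{\mathcal F}$ is bounded on it; hence there is a constant $C$ with $|\alpha_{\mathcal F}(\psi_t(\tilde z))-\alpha_{\mathcal F}(\psi_t(\tilde z'))|\leq C$ for every $t\in[0,1]$ and every $\tilde z,\tilde z'\in\tilde K_1$.

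For $k > C$,
$$\alpha_{\mathcal F}(T^k(\psi_t(\tilde z'))) = \alpha_{\mathcal F}(\psi_t(\tilde z'))+k > \alpha_{\mathcal F}(\psi_t(\tilde z)),$$
so the leaf of $\tilde{\mathcal F}$ through $\psi_t(T^k(\tilde z'))=T^k(\psi_t(\tilde z'))$ is strictly above the one through $\psi_t(\tilde z)$; applying the order-preserving $\tilde h_t$ gives $\tilde\phi^{\mathcal G_t}_{T^k(\tilde z')}\succ_{\mathcal G_t}\tilde\phi^{\mathcal G_t}_{\tilde z}$, i.e.\ $\theta_{\tilde z, T^k(\tilde z')}(\mathcal G_t)=\dot 1$ for every $t\in[0,1]$. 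The continuous function $t\mapsto \hat\theta_{\tilde z, T^k(\tilde z')}(\mathcal G_t)$ thus takes values in $\pi^{-1}(\dot 1)=\{1+4m:m\in\Z\}$, which is a discrete subspace of $\Z$ (each singleton $\{4m+1\}$ is open in the digital line topology), so the function is constant and $\hat\tau(\tilde z, T^k(\tilde z'),\mathcal F,\mathcal F')=0$. The case $k<-C$ is symmetric with $\theta\equiv\dot 3$, so $N=\lceil C\rceil+1$ works.

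The main obstacle is this vanishing step, because knowing only that $\theta_{\tilde z, T^k(\tilde z')}(\mathcal F)=\theta_{\tilde z, T^k(\tilde z')}(\mathcal F')\in\{\dot 1,\dot 3\}$ merely forces $\hat\tau\in 4\Z$. Ruling out nonzero multiples of $4$ is precisely what requires deforming $\mathcal F$ to $\mathcal F'$ continuously and controlling, uniformly along the deformation and over the compact family of lifts, the separation in the leaf space between $\tilde z$ and $T^k(\tilde z')$.
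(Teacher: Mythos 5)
Your proof is correct and takes essentially the same approach as the paper's: both reduce the $\Z$-indexed sum to finitely many terms by choosing a path in $\mathfrak F$ from $\mathcal F$ to $\mathcal F'$ (your $t\mapsto h_t(\mathcal F)$ plays exactly the role of the paper's $s\mapsto f_s^{-1}(\mathcal F)$), using compactness to find $k_0$ such that the angle $\theta_{\tilde z,T^k(\tilde z')}$ stays at $\dot 1$ or $\dot 3$ along the whole path for $|k|\geq k_0$ (forcing $\hat\tau=0$ since those fibres are discrete), and then applying Lemma~\ref{le:changeoffoliation} to the finitely many remaining compact sets. You merely make the estimate producing $k_0$ more explicit by introducing a $T$-equivariant leaf coordinate $\alpha_{\mathcal F}$, which the paper leaves implicit.
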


\begin{proof}Like in the proof of Lemma \ref{le:changeoffoliation}, it is sufficient to study the case where $K=d\times d'$ is the product of two topological closed disks. Choose a lift $\tilde d$ of $d$ and a lift $\tilde d'$ of $d'$ in $\tilde D$. Choose $f\in\mathrm{Homeo}_*(D^*) $ such that $\mathcal F'=f^{-1}(\mathcal F)$ and an identity isotopy $(f_s)_{s\in[0,1]}$ of $f$. Lift this isotopy to an isotopy identity $(\tilde f_s)_{s\in[0,1]}$ on $\tilde D$. There exists $k_0>0$ such that if $k\geq k_0$, then for every $s\in[0,1]$, every $\tilde z\in \tilde d$ and every $\tilde z'\in \tilde d'$ it holds that
$$\theta_{\tilde f_s(\tilde z), \tilde f_s(T^k(\tilde z'))} (\mathcal F)=\dot 1, \enskip \theta_{\tilde f_s(\tilde z), \tilde f_s(T^{-k}(\tilde z'))} (\mathcal F)=\dot 3,$$ and it implies that 
$$\hat \tau (\tilde z, T^k(\tilde z'), \mathcal F, {\mathcal F}' )=\hat \tau (\tilde z, T^{-k}(\tilde z'), \mathcal F, {\mathcal F}') =0.$$
To conclude, it remains to apply Lemma \ref{le:changeoffoliation}  to the compact sets $\tilde d\times T^k(\tilde d')$, $\vert k\vert <k_0$. \end{proof}

Let us conclude this section by stating and analogous of  Lemma \ref{le:symmetry}.  Define first what is a {\it displacement function}. For every $f\in\mathrm{Homeo}_*(D^*)$, every lift $\tilde f$ of $f$ to $\tilde D$ and every ray $\phi$ we can define a function $m_{\tilde f, \phi}: D^*\to\Z$ as follows:
we choose a lift $\tilde \phi$ of $\phi$, then for every $z\in\D$, we consider the lift $\tilde z$ of $z$ such that $\tilde z\in\overline {L(\tilde\phi)}\cap R(T(\tilde\phi))$ and we denote $m_{\tilde f, \phi}(z)$ the integer $m$ such that  $\tilde f(\tilde z)\in\overline {L(T^m(\tilde\phi))}\cap R(T^{m+1}(\tilde\phi))$, noting that it does not depend on the choice of $\tilde \phi$. Observe that $m_{T^k\circ\tilde f,\phi}= m_{\tilde f,\phi}+k$, for every $k\in\Z$.  One proves easily the following result:

\begin{lemma}\label{le:changeofvariabledisplacement}  For every compact subset $K\subset D^*$ and every rays $\phi,\phi'$, there exists $M>0$ such that for every $f\in \mathrm{Homeo}_*(D^*)$ and every lift $\tilde f$ of $f$ to $\tilde D$, it holds that 
 $$ z\in K \enskip \mathrm{and}\enskip f(z)\in K \Rightarrow \vert m_{\tilde f, \phi} (z) -m_{\tilde f, \phi'} (z)\vert \leq M.$$
 \end{lemma}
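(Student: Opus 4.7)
The plan is to show that the difference $m_{\tilde f,\phi}(z)-m_{\tilde f,\phi'}(z)$ does not actually depend on $f$ or $\tilde f$: it reduces to the difference of values of an auxiliary integer-valued function on $D^*$ depending only on $\phi$ and $\phi'$, evaluated at the two points $z$ and $f(z)$. Once this is established, the uniform bound follows because that auxiliary function is bounded on $K$.

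First, I would fix once and for all a lift $\tilde\phi$ of $\phi$ and a lift $\tilde\phi'$ of $\phi'$. Writing $F_\phi=\overline{L(\tilde\phi)}\cap R(T(\tilde\phi))$ and $F_{\phi'}=\overline{L(\tilde\phi')}\cap R(T(\tilde\phi'))$, each of these closed regions is a fundamental domain for the action of the deck group $\langle T\rangle$ on $\tilde D$, so for every $z\in D^*$ there is a unique lift $\tilde z_\phi\in F_\phi$ and a unique lift $\tilde z_{\phi'}\in F_{\phi'}$ (with the customary boundary identifications). I would then define $j:D^*\to\Z$ by the relation $\tilde z_{\phi'}=T^{j(z)}(\tilde z_\phi)$. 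Crucially, the map $j$ depends only on $\phi,\phi'$ and the chosen lifts, not on $f$.

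Next, I would chase the definition of $m_{\tilde f,\phi}$. By definition $\tilde f(\tilde z_\phi)=T^{m_{\tilde f,\phi}(z)}(\widetilde{f(z)}_\phi)$ and $\tilde f(\tilde z_{\phi'})=T^{m_{\tilde f,\phi'}(z)}(\widetilde{f(z)}_{\phi'})$. Substituting $\tilde z_{\phi'}=T^{j(z)}(\tilde z_\phi)$ and $\widetilde{f(z)}_{\phi'}=T^{j(f(z))}(\widetilde{f(z)}_\phi)$, and using that $T$ commutes with every lift of $f$, the equality $\tilde f(\tilde z_{\phi'})=T^{j(z)}\tilde f(\tilde z_\phi)$ gives
\[
T^{j(z)+m_{\tilde f,\phi}(z)}\bigl(\widetilde{f(z)}_\phi\bigr)=T^{m_{\tilde f,\phi'}(z)+j(f(z))}\bigl(\widetilde{f(z)}_\phi\bigr),
\]
hence $m_{\tilde f,\phi}(z)-m_{\tilde f,\phi'}(z)=j(f(z))-j(z)$. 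In particular this quantity is invariant under replacing $\tilde f$ by $T^k\tilde f$, consistently with the remark $m_{T^k\tilde f,\phi}=m_{\tilde f,\phi}+k$.

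Finally, I would show that $j$ is bounded on every compact subset of $D^*$. The projection $\pi:\tilde D\to D^*$ restricted to $F_\phi$ is proper on the preimage of any compact set $K\subset D^*$ avoiding $0$, so $\tilde K_\phi:=F_\phi\cap\pi^{-1}(K)$ is compact in $\tilde D$. The value $j(z)$ for $z\in K$ is the unique integer $j$ with $\tilde z_\phi\in T^{-j}(F_{\phi'})$, and since $\{T^{-j}(F_{\phi'})\}_{j\in\Z}$ is a locally finite cover of $\tilde D$, only finitely many translates meet the compact set $\tilde K_\phi$. Thus $N:=\sup_{z\in K}|j(z)|<\infty$, and setting $M=2N$ one gets $|m_{\tilde f,\phi}(z)-m_{\tilde f,\phi'}(z)|\le|j(f(z))|+|j(z)|\le M$ whenever both $z$ and $f(z)$ lie in $K$. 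There is no genuine difficulty; the only point requiring care is the bookkeeping in Step~3, which must respect the fact that $j$ is defined using two distinct fundamental domains.
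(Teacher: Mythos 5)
Your proof is correct, and since the paper only asserts the lemma with the remark ``one proves easily,'' your argument supplies exactly the kind of elementary change-of-fundamental-domain computation that was intended. The key identity $m_{\tilde f,\phi}(z)-m_{\tilde f,\phi'}(z)=j(f(z))-j(z)$, which isolates all dependence on $f$ into the two evaluations of a fixed bounded function $j$, is the right idea and the algebra checks out (using that $T$ commutes with every lift $\tilde f$).

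One small imprecision: you claim $\tilde K_\phi=F_\phi\cap\pi^{-1}(K)$ is compact, but $F_\phi=\overline{L(\tilde\phi)}\cap R(T(\tilde\phi))$ is half-open (it contains $\tilde\phi$ but not $T(\tilde\phi)$), so this intersection need not be closed. What you actually need --- and what is true --- is that it is relatively compact, equivalently that $\overline{F_\phi}\cap\pi^{-1}(K)$ is compact; local finiteness of the translates $\{T^{-j}(F_{\phi'})\}_{j\in\Z}$ then gives finitely many values of $j$ on this set, and the rest of the argument is unchanged. Likewise, the parenthetical ``customary boundary identifications'' is unnecessary here: with the half-open fundamental domain as the paper defines it, the lift $\tilde z_\phi$ is already unique with no identifications.
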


The following result precises the lack of symmetry of $(z,z')\mapsto\lambda(z, z', \mathcal F,  {\mathcal F}')$.

\begin{lemma} \label{le:symmetryannulus} Fix $\mathcal F$, $\mathcal F'$ in $\mathfrak F$. Choose a leaf $\phi$ of $\mathcal F$, a lift $\tilde \phi$ of $\phi$ to $\tilde D$ and a map $f\in\mathrm{Homeo}_*(D^*)$ such that $\mathcal F'=f^{-1}( {\mathcal F})$.  For every $z\in D^*$, denote $\tilde z^*$ the unique lift of $z$ such that   $\tilde z\in\overline {L(\tilde\phi)}\cap R(T(\phi))$.  Then, for every $(z,z')\in W$, we have
$$\lambda(z, z', \mathcal F, \mathcal F')-\lambda(z', z, \mathcal F, \mathcal F') )= m_{\tilde f, \phi}(z')-m_{\tilde f, \phi}(z) +\delta
_{\widetilde {f(z)}^*, \widetilde{f(z')}^*}( \mathcal F) -\delta_{\tilde z^*, \tilde z^*}( \mathcal F).$$\end{lemma}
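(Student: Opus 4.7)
The plan is to reduce both sides to the same ``renormalized'' count of $\delta$'s via three moves: apply Lemma \ref{le:symmetry} term by term in the universal cover, transport the $\mathcal F'$-terms to $\mathcal F$-terms via $\tilde f$, and then evaluate at the canonical lifts $\tilde z^*, \tilde z'^*$.

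\emph{First step.} I would fix arbitrary lifts $\tilde z,\tilde z'$. The equivariance $\lambda(T^k \tilde z',\tilde z,\mathcal F,\mathcal F') = \lambda(\tilde z',T^{-k}\tilde z,\mathcal F,\mathcal F')$ (from the listed properties with $f=\mathrm{Id}$), combined with a reindexing of the summation, gives $\lambda(z',z,\mathcal F,\mathcal F') = \sum_{k\in\Z}\lambda(T^k\tilde z',\tilde z,\mathcal F,\mathcal F')$. Subtracting this from the defining series of $\lambda(z,z',\mathcal F,\mathcal F')$ and invoking Lemma \ref{le:symmetry} term by term yields
$$\lambda(z,z',\mathcal F,\mathcal F') - \lambda(z',z,\mathcal F,\mathcal F') = \sum_{k\in\Z}\bigl(\delta_{\tilde z,T^k\tilde z'}(\mathcal F') - \delta_{\tilde z,T^k\tilde z'}(\mathcal F)\bigr).$$
The sum is finitely supported because $\delta_{\tilde z,T^k\tilde z'}(\mathcal F)$ and $\delta_{\tilde z,T^k\tilde z'}(\mathcal F')$ both stabilize at $\pm 1/2$ for $|k|$ large with matching limits.

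\emph{Second step.} Next I would introduce the regularized ``relative leaf position''
$$\Theta_\mathcal F(A,B) := \lim_{N\to\infty}\sum_{k=-N}^N \delta_{A,T^k B}(\mathcal F),$$
whose existence follows because $k\mapsto\delta_{A,T^k B}(\mathcal F)$ is a step function jumping once from one of $\pm\tfrac12$ to the other, possibly with a single intermediate value $0$ when some $T^{k^*}(\tilde\phi_B)$ equals $\tilde\phi_A$. Using $\mathcal F'=f^{-1}(\mathcal F)$ together with $\tilde f T=T\tilde f$, the equivariance of $\delta$ gives $\delta_{\tilde z,T^k\tilde z'}(\mathcal F') = \delta_{\tilde f(\tilde z),T^k\tilde f(\tilde z')}(\mathcal F)$, so the right-hand side of the first step equals $\Theta_\mathcal F(\tilde f(\tilde z),\tilde f(\tilde z')) - \Theta_\mathcal F(\tilde z,\tilde z')$ for any lifts.

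\emph{Third step.} I would now specialize $\tilde z = \tilde z^*$, $\tilde z'=\tilde z'^*$, observing that the very definition of $m_{\tilde f,\phi}$ gives $\tilde f(\tilde z^*) = T^{m_{\tilde f,\phi}(z)}\widetilde{f(z)}^*$ and the analogous identity at $z'$. Two short auxiliary facts about $\Theta_\mathcal F$ then close the argument:
\begin{itemize}
\item the shift formula $\Theta_\mathcal F(T^m A,T^{m'}B) = \Theta_\mathcal F(A,B)+(m'-m)$, verified by comparing the symmetric partial sums $\sum_{k=-N}^N$ before and after shifting the summation index by $m'-m$ and accounting for the $\pm 1/2$ tails;
\item the canonical evaluation $\Theta_\mathcal F(X^*,Y^*) = \delta_{X^*,Y^*}(\mathcal F)$ for any canonical lifts $X^*,Y^*$, obtained by a three-case inspection (using the constraint $\tilde\phi\preceq_\mathcal F\tilde\phi_{X^*}\prec_\mathcal F T(\tilde\phi)$, and likewise for $Y^*$) according to whether $\tilde\phi_{Y^*}$ is above, equal to, or below $\tilde\phi_{X^*}$.
\end{itemize}
Substituting these into $\Theta_\mathcal F(\tilde f(\tilde z^*),\tilde f(\tilde z'^*)) - \Theta_\mathcal F(\tilde z^*,\tilde z'^*)$ produces exactly $m_{\tilde f,\phi}(z') - m_{\tilde f,\phi}(z) + \delta_{\widetilde{f(z)}^*,\widetilde{f(z')}^*}(\mathcal F) - \delta_{\tilde z^*,\tilde z'^*}(\mathcal F)$.

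The main obstacle will be the sign bookkeeping: one has to pin down once and for all the direction in which $T$ shifts the order $\preceq_\mathcal F$, the limits of $\delta_{A,T^k B}(\mathcal F)$ as $k\to\pm\infty$, and the sign in the shift formula for $\Theta$, and make sure they are mutually consistent. Once those conventions are fixed, both auxiliary facts in the third step are elementary case-checks, and the rest is a reindexing exercise.
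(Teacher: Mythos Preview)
Your proposal is correct and follows essentially the same route as the paper: apply Lemma~\ref{le:symmetry} term by term, use the equivariance $\delta_{\tilde z,T^k\tilde z'}(f^{-1}\mathcal F)=\delta_{\tilde f(\tilde z),T^k\tilde f(\tilde z')}(\mathcal F)$, and then evaluate at the canonical lifts via the definition of $m_{\tilde f,\phi}$. The only cosmetic difference is that you package the final evaluation into the auxiliary quantity $\Theta_{\mathcal F}(A,B)=\lim_{N\to\infty}\sum_{-N}^N\delta_{A,T^kB}(\mathcal F)$ and its shift and normalization properties, whereas the paper writes out the corresponding case table for $\delta_{\tilde f(T^{-m}\tilde z),\,T^{k+m'-m}\tilde f(T^{-m'}\tilde z')}(\mathcal F)-\delta_{\tilde z,T^k\tilde z'}(\mathcal F)$ explicitly and sums it directly; the two computations are the same.
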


\begin{proof} Fix $(z,z')\in W$. During the proof, we will lighten the notations by writing
$$\tilde z=\tilde z^*,\enskip \tilde z'=\tilde z'{}^*, \enskip m= m_{\tilde f, \phi}(z), \enskip m'= m_{\tilde f, \phi}(z').$$ By using Lemma \ref{le:symmetry} we get$$\begin{aligned}{} &\enskip\enskip\enskip\enskip \enskip\enskip  \lambda(z, z', \mathcal F, f^{-1}( {\mathcal F}))-\lambda(z', z, \mathcal F, f^{-1}({\mathcal F}) )\\
&=\sum_{k\in\Z} \lambda(\tilde z, T^k(\tilde z'), \mathcal F, f^{-1}( {\mathcal F}))- \sum_{k\in\Z} \lambda(\tilde z', T^k(\tilde z), \mathcal F, f^{-1}({\mathcal F}) )\\
&=  \sum_{k\in\Z} \lambda(\tilde z, T^k(\tilde z'), \mathcal F, f^{-1}( {\mathcal F}))- \sum_{k\in\Z} \lambda(T^k(\tilde z'), \tilde z, \mathcal F, f^{-1}({\mathcal F}) )\\
&=  \sum_{k\in\Z} \lambda(\tilde z, T^k(\tilde z'), \mathcal F, f^{-1}( {\mathcal F}))- \lambda(T^k(\tilde z'), \tilde z, \mathcal F, f^{-1}({\mathcal F}) )\\
&= \sum_{k\in\Z} \delta_{\tilde z,T^k(\tilde z')}(f^{-1}(\mathcal F))- \delta_{\tilde z, T^k(\tilde z')}(\mathcal F)\\
&=  \sum_{k\in\Z} \delta_{\tilde f(\tilde z),T^k(\tilde f(\tilde z'))}(\mathcal F)- \delta_{\tilde z, T^k(\tilde z')}(\mathcal F)\\
&= \sum_{k\in\Z} \delta_{\tilde f(T^{-m}(\tilde z)), T^k(\tilde f(T^{-m}(\tilde z')))}(\mathcal F)- \delta_{\tilde z, T^k(\tilde z')}(\mathcal F)\\
&= \sum_{k\in\Z} \delta_{\tilde f(T^{-m}(\tilde z)), T^{k+m'-m}(\tilde f(T^{-m'}(\tilde z')))}(\mathcal F)- \delta_{\tilde z, T^k(\tilde z')}(\mathcal F).\end{aligned}$$

Recalling that $\tilde z=\tilde z^*$ and $\tilde z'=\tilde z'{}^*$, we have
$$ \delta_{\tilde z, T^k(\tilde z')}(\mathcal F)=\begin{cases} -1/2 &\text{ if $k<0$} \\ 
1/2&\text{ if $k>0$}\end{cases}$$
and
$$ \delta_{\tilde f(T^{-m}(\tilde z)), T^{k+m'-m}(\tilde f(T^{-m'}(\tilde z')))}(\mathcal F)=\begin{cases}  -1/2 &\text{ if $k+m'-m<0$} \\ 
1/2&\text{ if $k+m'-m>0$}.\end{cases}$$
So we deduce that

$$ \delta_{\tilde f(T^{-m}(\tilde z)), T^{k+m'-m}(\tilde f(T^{-m'}(\tilde z')))}(\mathcal F)- \delta_{\tilde z, T^k(\tilde z')}(\mathcal F)=\begin{cases}  0 &\text{ if $k<\min (0, m-m')$}\\
0&\text{ if $k>\max (0, m-m')$}\\
1 &\text{ if $m-m'<k<0$}\\
-1 &\text{ if $0<k<m-m'$}\\
-\delta_{\tilde z^*, \tilde z'^*}( \mathcal F)-1/2 &\text{ if $k=0<m-m'$}\\
-\delta_{\tilde z^*, \tilde z'^*}( \mathcal F)+1/2 &\text{ if $k=0>m-m'$}\\
\delta_{\widetilde {f(\tilde z)}^*, \widetilde{f(\tilde z')}^*}( \mathcal F)+1/2 &\text{ if $k=m-m'<0$}\\
\delta_{\widetilde {f(\tilde z)}^*, \widetilde{f(\tilde z')}^*}( \mathcal F)-1/2 &\text{ if $k=m-m'>0$}\\
\delta_{\widetilde {f(\tilde z)}^*, \widetilde{f(\tilde z')}^*}( \mathcal F) -\check\theta_{\tilde z^*, \tilde z'^*}( \mathcal F)&\text{ if $k=0=m-m'$}\end{cases}$$
and consequently, that
$$\sum_{k\in\Z} \delta_{\tilde f(T^{-m}(\tilde z)), T^{k+m'-m}(\tilde f(T^{-m'}(\tilde z')))}(\mathcal F)- \delta_{\tilde z, T^k(\tilde z')}(\mathcal F)= m'-m+  \delta_{\widetilde {f(z)}^*, \widetilde{f(z')}^*}( \mathcal F) -\delta_{\tilde z^*, \tilde z'{}^*}( \mathcal F).$$\end{proof}

\begin{remarks*} 1.  The fonction $\lambda_{f, \mathcal F} (z,z')\mapsto \lambda (z, z', \mathcal F, f^{-1}( {\mathcal F}))$ depends only on the foliations $\mathcal F$ and $f^{-1}(\mathcal F)$ (and not explicitely in $f$) and the fonction $z\mapsto m_{\tilde f, \phi} (z)$ depends only on the ray $ \phi$ and on $\tilde f$. 

\enskip 2. The equality proven in Lemma  \ref{le:symmetryannulus} can be written
$$\Lambda_{\tilde f, \mathcal F,  \phi}(z, z')-\Lambda_{\tilde f, \mathcal F,  \phi}(z', z) =\delta_{\widetilde {f(z)}^*, \widetilde{f(z')}^*}( \mathcal F)-\delta_{\tilde z^*, \tilde z^*}( \mathcal F) ,$$
where $$\Lambda_{\tilde f, \mathcal F, \phi}(z, z')=\lambda_{f, \mathcal F}(z, z')+m_{\tilde f, \phi} (z).$$\end{remarks*}

\enskip 3.  For every $k\in\Z$ it holds that
 $\Lambda_{T^k\circ \tilde f, \mathcal F, \phi}=  \Lambda_{\tilde f, \mathcal F, \phi}+k.$

\subsection{Rotation number and linking number.}

We will see how to define rotation numbers and self linking numbers within this formalism.

\begin{definition} \enskip Fix $f\in \mathrm{Homeo}_*(D^*) $ and a lift $\tilde f$ to $\tilde D$.  Say that $z\in D^*$ has a {\it rotation number} $\mathrm{rot}_{\tilde f}(z)\in\R$ if: 

\begin{enumerate}
\item there exists a compact set $K\subset D^*$ such that $\#\{n\geq 0\,\vert \, f^n(z)\in K\}=+\infty$;
\item if $\phi$ is a ray and if $K\subset D^*$ is a compact set containing $z$, then for every $\varepsilon>0$, there exists $n_0\geq 0$ such that for every $n\geq n_0$ it holds that
$$ f^n(z)\in K \Rightarrow \left\vert {1\over n}\sum_{i=0}^{n-1} m_{\tilde f, \phi} (f^i(z)) -\mathrm{rot}_{\tilde f}(z)\right \vert \leq \varepsilon.$$
\end{enumerate}
\end{definition}

\begin{remark*} Note that $\sum_{i=0}^{n-1} m_{\tilde f, \phi} (f^i(z))= m_{\tilde f^n, \phi} (z)$. Using Lemma \ref{le:changeofvariabledisplacement}, one deduces that if the second assertion is true for a ray $\phi$, it is true for every ray $\phi'$.\end{remark*}

\begin{definition} \label{de:rotationlinkingpoint}Fix $f\in \mathrm{Homeo}^*(D^*) $ and a lift $\tilde f$ to $\tilde D$.  Say that $(z,z')\in W$ {\it has a linking number} $\mathrm{link}_{\tilde f}(z,z')\in\R$ if 

\begin{enumerate}

\item the points $z$ and $z'$ have a rotation number;
\item there exists a compact set $K\subset W$ such that $\#\{n\geq 0\,\vert \, (f^n(z), f^n(z'))\in K\}=+\infty$;
\item if $\mathcal F$ is a radial foliation, if $\phi$ is leaf of $\mathcal F$ and if $K\subset W$ is a compact set containing $(z,z')$, then for every $\varepsilon>0$, there exists $n_0\geq 0$ such that for every $n\geq n_0$ it holds that
$$ (f^n(z), f^n(z'))\in K \Rightarrow \left\vert {1\over n}\sum_{i=0}^{n-1} {\Lambda}_{\tilde f, \mathcal F, \phi}(f^i(z),f^i(z')) -\mathrm{link}_{\tilde f}(z,z')\right \vert \leq \varepsilon.$$
\end{enumerate}

\end{definition}

\begin{remarks*}\enskip 1. Note that $$\sum_{i=0}^{n-1} {\Lambda}_{\tilde f, \mathcal F, \phi} (f^i(z), f^i(z'))={ \Lambda}_{\tilde f^n, \mathcal F, \phi} (z,z').$$

\enskip 2. As explained in the remark following the definition of the rotation number, we know that if the third assertion is true for a leaf $\phi$, it is true for every other leaf. Let us explain now, why if the third assertion is true for a foliation $\mathcal F\in \mathfrak F$, it is true for every other foliation $\mathcal F'\in  \mathfrak F$.\footnote{This assertion is moreless what is used by Gambaudo and Ghys to prove in \cite{GaGh} that two symplectic diffeomorphisms of $\D$ that are $C^0$ conjugate have the same Calabi invariant.}  We have 
$$\begin{aligned}{}&\enskip \left\vert \lambda(z,z', {\mathcal F}, f^{-n}({\mathcal F})) - \lambda(z,z', {\mathcal F}', f^{-n}({\mathcal F'}))\right\vert\\
= &\enskip \left \vert \lambda(z,z', {\mathcal F}, {\mathcal F}') + \lambda(z,z', {\mathcal F}', f^{-n}({\mathcal F}))- \lambda(z,z', {\mathcal F}', f^{-n}({\mathcal F'}))\right\vert\\
=&\enskip \left \vert \lambda(z,z', {\mathcal F}, {\mathcal F}') - \lambda(z,z', f^{-n}({\mathcal F}), f^{-n}({\mathcal F'}))\right\vert\\
=&\enskip \left \vert \lambda(z,z', {\mathcal F}, {\mathcal F}') - \lambda(f^n(z),f^n(z'), {\mathcal F}, {\mathcal F}')\right\vert\\
\leq & \enskip \overline{\hat \tau}(z,z', {\mathcal F}, {\mathcal F}') +\overline {\hat \tau} (f^n(z),f^n(z'),  {\mathcal F}, {\mathcal F}').\end{aligned}$$
It remains to apply Lemma \ref{le:secondchangeoffoliation} :  for every compact set $K\subset D^*$, there exists $M>0$ such that for every point $(z,z')$ satisfying  $(z,z')\in K$ and $(f^n(z), f^n(z'))\in K$, we have $ \overline{\hat \tau}(z,z', {\mathcal F}, {\mathcal F}')\leq M $ and  $ \overline{\hat \tau}(f^n(z),f^n(z'),  {\mathcal F}, {\mathcal F}')\leq M $.

\enskip 3. By the remark following Lemma \ref{le:symmetryannulus}, if $\mathrm{link}_{\tilde f}(z,z')$ exists, then $\mathrm{link}_{\tilde f}(z',z)$ exists and $\mathrm{link}_{\tilde f}(z',z)=\mathrm{link}_{\tilde f}(z,z')$. Indeed, we have
$$\left\vert { \Lambda}_{\tilde f^n, \mathcal F, \phi} (z,z')-{ \Lambda}_{\tilde f^n, \mathcal F, \phi} (z',z)\right\vert=\left\vert\delta_{\widetilde {f^n(z)}^*, \widetilde{f(^nz')}^*}( \mathcal F)-\delta_{\tilde z^*, \tilde z^*}( \mathcal F)\right\vert\leq 1.$$

\end{remarks*}

\begin{definition} \label{de:rotationlinkingmeasure}\enskip Fix $f\in \mathrm{Homeo}_*(D^*) $ and a lift $\tilde f$ to $\tilde D$.  Suppose that $f$ lets invariant a finite Borel measure $\mu$. 

\begin{enumerate}
\item Say that $\mu$ has a rotation number $\mathrm{rot}_{\tilde f}(\mu)\in\R$ if $\mu$-almost every point $z$  has a rotation number and if the function $\mathrm{rot}_{\tilde f}$ is $\mu$-integrable, and in that case set $$\mathrm{rot}_{\tilde f}(\mu)=\int_{\D^*} \mathrm{rot}_{\tilde f}\, d\mu.$$

\item Say that $\mu$ has a {\it self-linking number } $\mathrm{link}_{\tilde f}(\mu)\in\R$, if 

\begin{itemize}
\item $\mu$ is non atomic,
\item $\mu$ has a rotation number,
\item $\mu\times\mu$-almost every couple $(z, z')\in W$  has a linking number and the function $\mathrm{link}_{\widetilde f}$ is $\mu\times\mu$-integrable, and in that case set $$\mathrm{link}_{\tilde f}(\mu)=\int_{W} \mathrm{link}_{\tilde f}\, d\mu\times d\mu.$$
\end{itemize}

\end{enumerate}
\end{definition}

\begin{remarks*}\enskip 1. If there exists a ray $\phi$ such that $m_{\tilde f, \phi}$ is $\mu$-integrable, then by Birkhoff Ergodic Theorem, one knows that $\mu$ has a rotation number and  it holds that
$$\mathrm{rot}_{\tilde f}(\mu)=\int_{D^*} m_{\tilde f, \phi}\, d\mu.$$
 
\enskip 2. If there exists a radial foliation $\mathcal F$ and a leaf $\phi$ of $\mathcal F$ such that  $m_{\tilde f, \phi}$ is $\mu$-integrable and ${\lambda}_{f, \mathcal F}$  is $\mu\times\mu$-integrable, then $\mu$ has a self-linking number and  it holds that
$$\mathrm{link}_{\tilde f}(\mu)=\int_{W} {\Lambda}_{\tilde f, \mathcal F, \phi}\, d\mu\times d\mu.$$ 

\end{remarks*}

 Suppose that $f\in\mathrm{Diff}^1_{\omega}(\D)$. Define $\D^*=\D\setminus(\S\cup\{0\})$. Choose an identity isotopy $I=(f_s)_{s\in[0,1]}$ of $f$ in $\mathrm{Homeo}_{*}(\D)$ that fixes $0$ and write $\tilde f$ for the lift of $f_{\vert\D^*}$ naturally defined by the restriction of $I$ to $\D^*$. Extend the isotopy to a family $(f_s)_{s\in\R}$ such that $f_{s+1}=f_s\circ f$ for every $s\in[0,1]$. 
Denote $\mu_{\omega}$ the finite measure naturally defined by $\omega$. Consider the Euclidean radial foliation $\mathcal F_*$ on $\D^*$, whose leaves are the paths $(0,1)\ni t\mapsto te^{2i\pi\alpha}$, $\alpha\in[0,1)$. Note now that for every leaf $\phi$ of $\mathcal F_*$, every $n\geq 1$ and every $z\in D^*$ it holds that
$$\left\vert m_{\tilde f^n, \phi}(z) -\mathrm{ang}_{I^n} (0,z)\right\vert\leq 1.$$
Moreover, the function $z\mapsto \mathrm{ang}_{I} (0,z)$ is bounded because $f$ is a $C^1$ diffeomorphism. It implies that $m_{\tilde f, \phi}$ is bounded and so, $\mu_{\omega_{\omega}} $ has a rotation number (according to Definition  \ref{de:rotationlinkingmeasure}) and we have
$$\mathrm{rot}_{\tilde f}(\mu)=\int_{D^*} m_{\tilde f, \phi}\, d\mu_{\omega}=\int_{D^*} \mathrm{ang}_{I} (0,z)\, d\mu_{\omega}.$$

Note now that for every $n\geq 1$ and every $(z,z')\in W$ it holds that
$$\left\vert {\Lambda}_{\tilde f^n, \mathcal F_*, \phi} (z,z')-\mathrm{ang}_I^n(z,z')\right\vert\leq 1+1=2.$$
To get this inequality, one must consider a frame at $z$ moving with time: at time $n$ it is the image of the original frame at $z$ by the rotation $R_{\mathrm{ang}_{I^n} (0,z)}$. Indeed,  we have the two following properties:

\begin{itemize}
\item  $\left\vert m_{\tilde f^n, \phi}(z) -\mathrm{ang}_{I^n} (0,z)\right\vert\leq 1$;
\item the difference between the variation of 
angle of the vector $f_s(z')-f_s(z)$, $s\in[0,n]$, in the moving frame and  ${\lambda}_{ f^n, \mathcal F_*} (z,z')$ is smaller than $1$.
\end{itemize}

One deduces that $\mu_{\omega}$ has a self-linking number (according to Definition \ref{de:rotationlinkingmeasure}) and we have

$$\mathrm{link}_{\tilde f}(\mu_{\omega})= \int_{W} {\Lambda}_{\tilde f, \mathcal F, \phi}\, d\mu_{\omega}\times d\mu_{\omega}= \int_{W}\mathrm{ang}_I^n(z,z')\, d\mu_{\omega}\times d\mu_{\omega} =\widetilde{\mathrm{Cal}}(I).$$

Let us conclude with a proposition, that will useful to prove Theorem \ref{th:principal}, and that summarize what has been done in this section.

\begin{proposition}Suppose that $f\in\mathrm{Diff}^1_{\omega}(\D)$ fixes $0$. Let $I$ be an identity isotopy of $f$ and $\tilde f$ be the lift of $f_{\vert \D^*}$ naturally defined by $[I]$. If  $\mathcal F\in\mathfrak{F}$ is a radial foliation and $\phi$ is a leaf of $\mathcal F$ such that $m_{\tilde f, \phi}$ is $\mu_{\omega}$-integrable and
${\lambda}_{f, \mathcal F}$ is $\mu_{\omega}\times\mu_{\omega}$-integrable, then it holds that 
$$\ \widetilde{\mathrm{Cal}}(I)= \int_{W} {\Lambda}_{\tilde f, \mathcal F, \phi}\, d\mu_{\omega}\times d\mu_{\omega}.$$ 
\end{proposition}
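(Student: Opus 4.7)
The proposition is a direct consequence of two facts already put in place in Section \ref{se:Calabi}. First, by the second remark following Definition \ref{de:rotationlinkingmeasure}, the integrability hypothesis on $m_{\tilde f,\phi}$ and $\lambda_{f,\mathcal F}$ is exactly what is needed to conclude that $\mu_{\omega}$ admits a self-linking number and that
$$\mathrm{link}_{\tilde f}(\mu_{\omega}) \,=\, \int_{W}\Lambda_{\tilde f,\mathcal F,\phi}\, d\mu_{\omega}\!\times\! d\mu_{\omega}.$$
The justification of this remark is Birkhoff's ergodic theorem applied to the $\mu_{\omega}\!\times\!\mu_{\omega}$-preserving map $f\!\times\! f$ acting on $W$, combined with the cocycle identity $\Lambda_{\tilde f^n,\mathcal F,\phi}(z,z') = \sum_{i=0}^{n-1} \Lambda_{\tilde f,\mathcal F,\phi}(f^i(z),f^i(z'))$ provided by the third bullet point listed after Lemma \ref{le:symmetryannulus}.

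Second, the discussion immediately preceding the statement has already carried out the same identification for the Euclidean radial foliation $\mathcal F_*$ and a leaf $\phi_*$, obtaining
$$\mathrm{link}_{\tilde f}(\mu_{\omega}) \,=\, \int_{W}\Lambda_{\tilde f,\mathcal F_*,\phi_*}\, d\mu_{\omega}\!\times\! d\mu_{\omega} \,=\, \int_{W}\mathrm{ang}_I(z,z')\, d\mu_{\omega}\!\times\! d\mu_{\omega} \,=\, \widetilde{\mathrm{Cal}}(I).$$
The relevant integrability there comes from the $C^1$-hypothesis on $f$, which yields the uniform bounds $|m_{\tilde f,\phi_*}(z)-\mathrm{ang}_I(0,z)|\le 1$ and $|\Lambda_{\tilde f,\mathcal F_*,\phi_*}(z,z')-\mathrm{ang}_I(z,z')|\le 2$; the last equality is the Shelukhin--Joly formula recalled in the introduction.

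Chaining these two displays yields the proposition, provided one knows that the quantity $\mathrm{link}_{\tilde f}(\mu_{\omega})$ appearing on both right-hand sides is the \emph{same} object, intrinsic to $\mu_{\omega}$ and independent of the auxiliary foliation used to compute it. This is the content of Remark 2 following Definition \ref{de:rotationlinkingpoint}: changing foliation alters $\lambda(z,z',\mathcal F,f^{-n}(\mathcal F))$ by $\lambda(z,z',\mathcal F,\mathcal F') - \lambda(f^n(z),f^n(z'),\mathcal F,\mathcal F')$, which is uniformly bounded on compact subsets of $W$ by Lemma \ref{le:secondchangeoffoliation}, so dividing by $n$ kills it at every point whose $(f\!\times\! f)$-orbit is recurrent into a fixed compact set. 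Hence the Birkhoff limit defining $\mathrm{link}_{\tilde f}$ is foliation-independent $\mu_{\omega}\!\times\!\mu_{\omega}$-a.e., and the two displays refer to the same real number.

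There is no significant obstacle; the proposition is essentially the formal synthesis of what has been assembled in Section \ref{se:Calabi}, repackaged into a form suitable for the comparison argument with the approximate foliation $\mathcal F_1$ used in the proof of Theorem \ref{th:principal}. The only point requiring care is that the hypotheses of Birkhoff's theorem are in force — integrability of $\Lambda_{\tilde f,\mathcal F,\phi}$ against $\mu_{\omega}\!\times\!\mu_{\omega}$ follows at once from the stated integrability of its two summands $\lambda_{f,\mathcal F}$ and $m_{\tilde f,\phi}(z)$ (the latter viewed as a function on $W$ independent of $z'$ and multiplied by $\mu_{\omega}(\D)=\pi<\infty$).
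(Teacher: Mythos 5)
Your proof is correct and matches the argument the paper clearly intends: the proposition is stated without an explicit proof environment, as a summary of the preceding section, and your reconstruction --- apply Remark 2 following Definition \ref{de:rotationlinkingmeasure} to get $\mathrm{link}_{\tilde f}(\mu_{\omega})=\int_W\Lambda_{\tilde f,\mathcal F,\phi}\,d\mu_{\omega}\times d\mu_{\omega}$ for the given $(\mathcal F,\phi)$, combine with the explicit computation for the Euclidean foliation $\mathcal F_*$ that gives $\mathrm{link}_{\tilde f}(\mu_{\omega})=\widetilde{\mathrm{Cal}}(I)$, and invoke foliation-independence of the pointwise linking number (Remark 2 after Definition \ref{de:rotationlinkingpoint}, via Lemma \ref{le:secondchangeoffoliation}) to identify the two --- is precisely the synthesis the paper has prepared. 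Your observation that integrability of $\Lambda_{\tilde f,\mathcal F,\phi}=\lambda_{f,\mathcal F}+m_{\tilde f,\phi}$ follows from the two stated hypotheses because $\mu_{\omega}$ is a finite measure is also the right (and only nontrivial) verification needed.
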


\section{Construction of a good radial foliation for an irrational pseudo-rotation.} \label{se:goodfoliation}

The three first sections of this chapter come from \cite{L2}. All proofs can be found there. The fourth one, concerning properties of projected foliations is mainly new. The last proposition of the fourth section and the isotopy defined in the last section already appeared in \cite{L1} but in a slightly different context (twist maps instead of untwisted maps).

\subsection{Generating functions}\label{sse:generating} Let us denote $\pi_1:(x,y)\mapsto x$ and $\pi_2:(x,y)\mapsto y$ the two projections  defined on the Euclidean plane $\R^2$. An orientation preserving homeomorphism $f$ of $\R^2$  will be called {\it untwisted} if the map
$$(x,y)\mapsto(\pi_1(f(x,y)), y)$$ is a homeomorphism, which means that there exist two continuous functions $g, g'$ on $\R^2$ such that
$$f(x,y)=(X,Y)\Leftrightarrow\begin{cases} x=g(X,y),\\ Y=g'(X,y).\end{cases}$$ In this case, the maps $X\mapsto g(X,y)$ and $y\mapsto g'(X,y)$ are orientation preserving homeomorphisms of $\R$. If moreover, $f$ is area preserving, the continuous form $x dy+Y dX$ is exact: there exists a  $C^1$ function $h: \R^2\to \R$ such that 
$$\displaystyle g={\partial h\over \partial y}, \enskip\displaystyle g'={\partial h\over \partial X}.$$The function $h$, defined up to an additive constant, is a  {\it generating function} of $f$. 

\medskip
We can precise the definition by saying that  $f$  is a {\it $K$ Lipschitz untwisted homeomorphism}, where $K\geq 1$, if

\smallskip
\noindent {\bf i)}\enskip \enskip $f$ is untwisted;

\smallskip
\noindent {\bf ii)}\enskip \enskip $f$ is $K$ bi-Lipschitz;

\smallskip
\noindent {\bf iii)}\enskip \enskip the maps $X\mapsto g(X,y)$ and $y\mapsto g'(X,y)$ are $K$ bi-Lipschitz;

\smallskip
\noindent {\bf iv)} \enskip \enskip the maps $y\mapsto g(X,y)$ and $X\mapsto g'(X,y)$ are $K$ Lipschitz.

\medskip
If $f$ is a diffeomorphism of $\R^2$, denote $\mathrm{Jac}(f)(z)$ the Jacobian matrix at a point $z$. One proves easily, that for every $K>1$, there exists a neighborhood $\mathcal U$ of the identity matrix in the space of square matrices of order $2$, such that every $C^1$ diffeomorphism satisfying  $\mathrm{Jac}(f)(z)\in\mathcal U$, for every $z\in\R^2$, is a  $K$ Lipschitz untwisted homeomorphism.

Suppose that $f$ is a $C^1$ orientation preserving diffeomorphism of $\D$ that fixes $0$ and coincides with a rotation $R_{\alpha}$ on $\S$. Fix $\beta>\alpha$. We can extend our map to a homeomorphism of the whole plane (also denoted $f$) such that:$$
 f(z)= \begin{cases} R_{\alpha+r-1}(z)&\text{ if \enskip $1\leq \vert z\vert \leq1+\beta-\alpha,$}\\
R_{\beta}(z)&\text{ if \enskip $\vert z\vert\geq 1+\beta-\alpha$.}\end{cases}
$$

Using the fact that the group of orientation preserving $C^1$ diffeomorphisms of $\D$ (and the group of symplectic diffeomorphisms of $\D$) that fix $0$ and every point of $\S$, when furnished with the $C^1$ topology, is path connected, one can prove the following:

\begin{proposition} \label{pr:decomposition} For every $K>1$, one can find a decomposition $f=f_m\circ\dots \circ f_1$, where each $f_i$ is a $K$ Lipschitz untwisted homeomorphism that fixes $0$ and induces a rotation on every circle of origin ${0}$ and radius $r\geq 1$. Moreover, if $f$ is area preserving, one can suppose that each $f_i$ preserves the area. \end{proposition}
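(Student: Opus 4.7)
The strategy is to connect the extended map $f$ to the identity by a continuous path $(F_t)_{t\in[0,1]}$ of homeomorphisms of $\R^2$ that all fix $0$, all induce rotations on every circle of radius $r\geq 1$, and are area preserving whenever $f$ is; then the decomposition is obtained by subdividing, setting $f_i=F_{i/m}\circ F_{(i-1)/m}^{-1}$ for $m$ large. The hypothesis recalled just above the proposition says that for any neighborhood $\mathcal U$ of the identity matrix of prescribed size there is a $K$ making the sufficient condition on the Jacobian work, so the job reduces to showing that, as $m\to\infty$, each $f_i$ becomes $C^1$-close to a small rotation, hence has Jacobian in $\mathcal U$ and therefore is $K$-Lipschitz untwisted.

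To build the path, first factor out the rotation on the boundary: since $f_{|\S}=R_\alpha$, the map $g=R_{-\alpha}\circ f_{|\D}$ lies in the group $G$ of orientation preserving $C^1$ diffeomorphisms of $\D$ that fix $0$ and every point of $\S$, and lies in the symplectic subgroup $G_\omega$ if $f$ is symplectic. By the path-connectedness statement recalled just before the proposition, choose a continuous path $(g_t)_{t\in[0,1]}$ in $G$ (resp.\ $G_\omega$), with respect to the $C^1$-topology, from $g_0=\mathrm{Id}_\D$ to $g_1=g$. Now define
\[
F_t(z)=\begin{cases} R_{t\alpha}(g_t(z)) & \text{if } |z|\leq 1,\\[2pt] R_{t\phi(|z|)}(z) & \text{if } |z|\geq 1,\end{cases}
\]
where $\phi(r)=\alpha+r-1$ for $1\leq r\leq 1+\beta-\alpha$ and $\phi(r)=\beta$ for $r\geq 1+\beta-\alpha$ is the radial twist describing $f$ outside $\D$. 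Both pieces equal $R_{t\alpha}$ on $\S$, so $F_t$ is a well-defined homeomorphism of $\R^2$ fixing $0$ and rotating every circle of radius $\geq 1$; moreover $F_0=\mathrm{Id}$, $F_1=f$, and $F_t$ is symplectic whenever $f$ is.

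For the decomposition, set $f_i=F_{i/m}\circ F_{(i-1)/m}^{-1}$. By telescoping, $f=f_m\circ\cdots\circ f_1$. On the exterior region, $f_i$ is the pure polar twist $z\mapsto R_{\phi(|z|)/m}(z)$, whose Jacobian tends uniformly to the identity matrix as $m\to\infty$. On $\D$, $f_i=R_{i\alpha/m}\circ(g_{i/m}\circ g_{(i-1)/m}^{-1})\circ R_{-(i-1)\alpha/m}$; continuity of $t\mapsto g_t$ in the $C^1$-topology on the compact disk $\D$ makes $g_{i/m}\circ g_{(i-1)/m}^{-1}$ converge to $\mathrm{Id}_\D$ in $C^1$-norm uniformly in $i$, hence the Jacobian of $f_i$ on $\D$ converges uniformly to that of $R_{\alpha/m}$, which itself lies in any prescribed neighborhood of the identity matrix for $m$ large. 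Thus, given $K>1$, choose $\mathcal U$ as in the remark preceding the proposition, then $m$ so large that the Jacobian of each $f_i$ lies in $\mathcal U$ wherever it is defined; each $f_i$ is then a $K$-Lipschitz untwisted homeomorphism, and is area preserving when $f$ is.

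The main technical obstacle is that $F_t$, and hence $f_i$, might fail to be $C^1$ across $\S$, so the cited Jacobian criterion does not apply verbatim. There are two clean ways around this: one can mollify the path $(g_t)$ near $\S$ so that each $g_t$ coincides with $\mathrm{Id}_\D$ in a fixed annular neighborhood of $\S$ (still using path-connectedness), which makes each $F_t$ globally $C^1$ and reduces to the smooth criterion; or one checks conditions (i)–(iv) of the definition of a $K$-Lipschitz untwisted homeomorphism directly for the piecewise-$C^1$ map $f_i$ from the uniform smallness of the one-sided Jacobians, using that the two pieces match on $\S$ as maps. Either way, the control is essentially a matter of continuity of the path $t\mapsto F_t$ in the piecewise $C^1$-topology, which is guaranteed by construction.
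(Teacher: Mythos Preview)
Your approach is exactly the one the paper sketches: use the path-connectedness of the group of $C^1$ (resp.\ symplectic) diffeomorphisms of $\D$ fixing $0$ and every point of $\S$ to build an isotopy $(F_t)$ from $\mathrm{Id}$ to $f$, then subdivide. The paper gives no further details, so your elaboration is appropriate and essentially correct.

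One small correction: your first workaround does not actually make $F_t$ globally $C^1$. Even if each $g_t$ equals the identity on an annular neighborhood of $\S$, near $\S$ the map $F_t$ is the polar twist $z\mapsto R_{t\psi(|z|)}(z)$ with $\psi(r)=\alpha$ for $r\leq 1$ and $\psi(r)=\alpha+r-1$ for $r\geq 1$; since $\psi'$ jumps from $0$ to $1$ at $r=1$, the map $F_t$ (and hence each $f_i$) remains only piecewise $C^1$ across $\S$, just like the extended $f$ itself. Your second workaround---checking conditions (i)--(iv) directly from uniform bounds on the one-sided Jacobians---is the correct fix: the defining conditions of a $K$-Lipschitz untwisted homeomorphism are Lipschitz, not $C^1$, conditions, so the Jacobian criterion stated just before the proposition extends immediately to bi-Lipschitz maps whose a.e.-defined Jacobian lies in $\mathcal U$.
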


We fix $K>1$ and a decomposition $f=f_m\circ\dots \circ f_1$ given by Proposition \ref{pr:decomposition}. We define two families $(g_i)_{1\leq i\leq m}$, $(g'_i)_{1\leq i\leq m}$ of continuous maps as follows
$$f_i(x,y)=(X,Y)\Leftrightarrow\begin{cases} x=g_i(X,y),\\ Y=g'_i(X,y),\end{cases}$$
For every $i\in\{1,\dots, m\}$ one can find an identity isotopy $I_i=(f_{i,s})_{s\in[0,1]}$ where each $f_{i,s}$ is an untwisted map such that
$$f_{i,s}(x,y)=(X,Y)\Leftrightarrow\begin{cases} x=(1-s) X+ sg_i(X,y),\\ Y=(1-s)y+ sg'_i(X,y),\end{cases}$$ and a natural isotopy $I=I_m\circ \dots \circ I_1$ of $f$. Note that if each $f_i$ is area preserving, one can find a family $(h_i)_{1\leq i\leq m}$ of $C^1$ maps, such that$$\displaystyle g_i={\partial h_i\over \partial y}, \enskip\displaystyle g'_i={\partial h_i\over \partial X}.$$ In that case every map $f_{i,s}$, $1\leq i\leq m$, $s\in[0,1]$, is area preserving and it holds that $$(X,y)\mapsto (1-s)Xy+s h_i(X,y)$$ is a generating function of $f_{i,s}$.

\subsection{The vector field associated to a decomposition} \label{sse:vectorfield}We consider in this section a  $C^1$ orientation preserving diffeomorphism $f$ of $\D$ that fixes $0$ and coincides with a rotation $R_{\alpha}$ on $\S$. We suppose that it is extended and then decomposed into untwisted maps as in Section \ref{sse:generating}. We keep the same notations. We extend the families 
$$(f_i)_{1\leq i\leq m}, \enskip (g_i)_{1\leq i\leq m},  \enskip (g'_i)_{1\leq i\leq m},  $$ to $m$ periodic families $$(f_i)_{i\in\Z},  \enskip (g_i)_{i\in\Z},  \enskip (g'_i)_{i\in\Z},$$
and the family $ (h_i)_{1\leq i\leq m} $ to a  $m$ periodic family  $ (h_i)_{i\in\Z}$ in case the $f_i$ are area preserving.

We fix an integer $b\geq 1$ and consider the finite dimensional vector space
$$	E_b= \left\lbrace {\bf z}=(z_{i})_{i\in {\Z}}\in (\R^2)^{\Z}\enskip 
\enskip\vert \enskip z_{i+mb}=z_{i},\enskip 
\mathrm{ for\enskip all}\enskip i\in {\Z}\right\rbrace,
$$
furnished with the scalar product
$$\left\langle
(z_{i})_{i\in {\Z}}, (z'_{i})_{i\in {\Z}}\right\rangle=\sum_{0< i\leq mb} x_ix'_i+y_iy'_i,$$ where $z_{i}=(x_i,y_i)$ and $z'_{i}=(x'_i,y'_i)$.
\medskip
We define on $E_b$ a vector field $\zeta=(\zeta_i)_{i\in\Z}$ by writing
$$
{\bf \zeta}
_{i}({\bf z})= ({\bf \xi}
_{i}({\bf z}), {\bf \eta}
_{i}({\bf z})) = \left(y_i-g'_{i-1}(x_{i},y_{i-1}), \enskip x_i-g_i(x_{i+1},y_i)\right).$$ Observe that $\zeta$ is invariant by the ($b$ periodic) shift
$$\eqalign{\varphi : E_b&\to E_b\enskip ,\cr
(z_{i})_{i\in {\Z}}&\mapsto (z_{i+m})_{i\in 
{\Z}} .\cr}$$

Let us state some facts about  $\zeta$. 
\begin{lemma} \label{le:lipschitz}The vector field is $A$ Lipschitz, where $A=\sqrt{6K^2+3}$.\end{lemma}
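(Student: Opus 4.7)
The plan is a direct computation using the Lipschitz hypotheses on the generating maps $g_i$ and $g'_i$ and the Cauchy--Schwarz (or rather the elementary) inequality $(a+b+c)^2 \le 3(a^2+b^2+c^2)$.

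First I would unpack what ``$K$-Lipschitz untwisted'' gives for the partial maps appearing in $\zeta$. Since each $f_i$ is $K$-Lipschitz untwisted, conditions (iii) and (iv) of the definition imply that each of the four partial maps $X\mapsto g_i(X,y)$, $y\mapsto g_i(X,y)$, $X\mapsto g'_i(X,y)$, $y\mapsto g'_i(X,y)$ is $K$-Lipschitz. From this I would estimate, for two points $\mathbf{z},\mathbf{z}'\in E_b$ and for each coordinate $i$,
\begin{align*}
|\xi_i(\mathbf{z})-\xi_i(\mathbf{z}')| &\le |y_i-y'_i|+K|x_i-x'_i|+K|y_{i-1}-y'_{i-1}|,\\
|\eta_i(\mathbf{z})-\eta_i(\mathbf{z}')| &\le |x_i-x'_i|+K|x_{i+1}-x'_{i+1}|+K|y_i-y'_i|,
\end{align*}
by inserting the intermediate values $g'_{i-1}(x'_i,y_{i-1})$ and $g_i(x'_{i+1},y_i)$ respectively and applying the triangle inequality together with the above Lipschitz bounds.

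Next I would square and apply $(a+b+c)^2\le 3(a^2+b^2+c^2)$ to get
\begin{align*}
(\xi_i-\xi'_i)^2 &\le 3\bigl((y_i-y'_i)^2+K^2(x_i-x'_i)^2+K^2(y_{i-1}-y'_{i-1})^2\bigr),\\
(\eta_i-\eta'_i)^2 &\le 3\bigl((x_i-x'_i)^2+K^2(x_{i+1}-x'_{i+1})^2+K^2(y_i-y'_i)^2\bigr).
\end{align*}
Summing over $i\in\{1,\dots,mb\}$ and exploiting the cyclic periodicity $z_{i+mb}=z_i$ to re-index the shifted terms, every $(x_i-x'_i)^2$ and $(y_i-y'_i)^2$ appears with total coefficient at most $3(1+2K^2)$, so
\[
\|\zeta(\mathbf{z})-\zeta(\mathbf{z}')\|^2 \le 3(1+2K^2)\,\|\mathbf{z}-\mathbf{z}'\|^2 = (6K^2+3)\,\|\mathbf{z}-\mathbf{z}'\|^2,
\]
which is exactly the desired bound with $A=\sqrt{6K^2+3}$.

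There is no real obstacle here; this is a routine estimate. The only point to be careful about is the bookkeeping of indices when summing over the $b$-periodic orbit, to make sure that no coefficient exceeds $3(1+2K^2)$ after re-indexing the terms $(y_{i-1}-y'_{i-1})^2$ and $(x_{i+1}-x'_{i+1})^2$. Crucially, the constant $A$ depends only on $K$ and not on $b$, which is precisely what is needed for the later arguments about the flow of $\zeta$ on the disks $\Delta_a$.
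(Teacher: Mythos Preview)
Your proof is correct and is exactly the standard computation. The paper itself does not give a proof of this lemma; it is one of the facts imported from \cite{L2} (``All proofs can be found there''). Your bookkeeping is accurate: after re-indexing, each $(x_i-x'_i)^2$ picks up coefficients $3K^2$ (from $\xi_i$), $3$ and $3K^2$ (from $\eta_i$ and the shifted $\eta_{i-1}$), and symmetrically for $(y_i-y'_i)^2$, giving the total $3+6K^2$ and hence $A=\sqrt{6K^2+3}$. Your closing remark that $A$ is independent of $b$ is precisely the point the paper needs later.
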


One deduces that the associated differential system 
$$\begin{cases} \dot x_i= y_i-g'_{i-1}(x_{i},y_{i-1}),\\ \dot y_i= x_i-g_i(x_{i+1},y_i),\end{cases}$$
defines a flow on $E$. We will denote by ${\bf z}^t$ the image at time $t$ of a point 
${\bf z}\in E$ by this flow.  As an application of Gronwall's Lemma, one gets:

\begin{lemma} \label{le:gronwall}For every $({\bf z}, {\bf z}')\in E_b$ and every $t\in\R$, one has 
$$e^{-A\vert t\vert}\Vert{\bf z}-{\bf z}'\Vert \leq\Vert {\bf z}^t-{\bf z}'{}^t\Vert \leq  e^{A\vert t\vert}\Vert{\bf z}-{\bf z}'\Vert $$and
$$e^{-A\vert t\vert}\Vert \zeta({\bf z})\Vert \leq\Vert \zeta({\bf z}^t)\Vert \leq  e^{A\vert t\vert}\Vert \zeta({\bf z})\Vert 
.$$\end{lemma}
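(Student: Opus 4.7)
Both inequalities are classical consequences of the $A$-Lipschitz bound for $\zeta$ established in the preceding lemma, and my plan is to derive them by applying Gronwall's inequality to two suitable pairs of solutions of the autonomous ODE $\dot{\mathbf{w}} = \zeta(\mathbf{w})$.

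For the first inequality, I would consider the real-valued function $u(t) = \Vert \mathbf{z}^t - \mathbf{z}'{}^t\Vert^2$. Since $t \mapsto \mathbf{z}^t$ and $t \mapsto \mathbf{z}'{}^t$ are both integral curves of $\zeta$, differentiating gives
$$\frac{du}{dt} = 2\langle \mathbf{z}^t - \mathbf{z}'{}^t,\, \zeta(\mathbf{z}^t) - \zeta(\mathbf{z}'{}^t)\rangle,$$
so that by Cauchy--Schwarz together with the Lipschitz bound,
$$\left|\frac{du}{dt}\right| \leq 2A\,\Vert \mathbf{z}^t - \mathbf{z}'{}^t\Vert^2 = 2A\,u(t).$$
Integrating the two resulting one-sided linear differential inequalities yields $u(0)e^{-2A|t|} \leq u(t) \leq u(0)e^{2A|t|}$, and extracting square roots gives exactly the claimed pair of bounds.

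For the second inequality, the key observation is that for each fixed $h \in \R$, the shifted curve $s \mapsto \mathbf{z}^{s+h}$ is again a solution of $\dot{\mathbf{w}} = \zeta(\mathbf{w})$, by autonomy of the flow. Applying the first inequality to the two solutions $s \mapsto \mathbf{z}^{s+h}$ and $s \mapsto \mathbf{z}^s$ evaluated at time $s = t$ gives
$$e^{-A|t|}\,\Vert \mathbf{z}^h - \mathbf{z}\Vert \;\leq\; \Vert \mathbf{z}^{t+h} - \mathbf{z}^t\Vert \;\leq\; e^{A|t|}\,\Vert \mathbf{z}^h - \mathbf{z}\Vert.$$
Dividing through by $|h|$ and letting $h \to 0$, the quotients $\Vert \mathbf{z}^h - \mathbf{z}\Vert/|h|$ and $\Vert \mathbf{z}^{t+h} - \mathbf{z}^t\Vert/|h|$ converge respectively to $\Vert \zeta(\mathbf{z})\Vert$ and $\Vert \zeta(\mathbf{z}^t)\Vert$, which yields the desired inequality.

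No serious obstacle is anticipated: this is the textbook proof of Gronwall's lemma for an autonomous Lipschitz vector field on a finite-dimensional Euclidean space, and the only input specific to the present setting is the explicit Lipschitz constant $A = \sqrt{6K^2+3}$ furnished by the previous lemma. The mild subtlety is simply to observe that once the distance estimate holds for \emph{any} two initial conditions, the norm estimate for $\zeta$ along orbits is obtained for free by comparing a trajectory with its own time-translate.
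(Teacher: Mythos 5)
Your proof is correct and is the standard Gronwall argument that the paper implicitly invokes when it presents Lemma~\ref{le:gronwall} ``as an application of Gronwall's Lemma'' (the detailed proof being deferred to \cite{L2}). The time-translation trick in your second step is exactly the right move: it avoids differentiating $\zeta$ along orbits, which would not be legitimate since $\zeta$ is only known to be Lipschitz, not globally $C^1$, on $E_b$.
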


In the case where the $f_i$ are area preserving, observe that $\zeta$ is the gradient
vector field of the function
$${\bf h}:{\bf z}\mapsto\sum_{0<i\leq mb} x_iy_i-h_{i-1}(x_i,y_{i-1})$$ and that ${\bf h}$ is invariant by $\varphi$. One can define the {\it energy} of an orbit $({\bf z}^t)_{t\in\R}$ to be
$$ \int_{-\infty}^{+\infty} \Vert \zeta({\bf z}^t)\Vert^2 \, dt= \lim_{t\to+\infty}  {\bf h}({\bf z^t})- \lim_{t\to-\infty}  {\bf h}({\bf z^t}).$$
As a consequence of Lemma  \ref{le:lipschitz} it holds that

\begin{lemma} \label{le:gradient} For every ${\bf z}\in E_b$, one has
$$\Vert \zeta({\bf z})\Vert^2\leq  A\int_{-\infty}^{+\infty} \Vert \zeta({\bf z}^t)\Vert^2 \, dt = A\left(\lim_{t\to+\infty}  {\bf h}({\bf z^t})- \lim_{t\to-\infty}  {\bf h}({\bf z^t})\right).$$
\end{lemma}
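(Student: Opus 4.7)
The statement has two parts: an equality identifying the improper integral of $\|\zeta\|^2$ along the orbit with the jump of the Lyapunov function $\bf h$, and an inequality bounding the pointwise value of $\|\zeta({\bf z})\|^2$ by $A$ times that integral. Both parts are short and follow from the two preceding lemmas; nothing more delicate than Gronwall and the fundamental theorem of calculus is needed.

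For the equality on the right, I would use that $\zeta = \nabla {\bf h}$, so along the gradient flow $\dot{\bf z}^t = \zeta({\bf z}^t)$ we have
\[
\frac{d}{dt}{\bf h}({\bf z}^t) = \bigl\langle \nabla {\bf h}({\bf z}^t), \dot{\bf z}^t\bigr\rangle = \|\zeta({\bf z}^t)\|^2 \geq 0.
\]
Thus $t \mapsto {\bf h}({\bf z}^t)$ is nondecreasing, so the two limits $\lim_{t\to\pm\infty}{\bf h}({\bf z}^t)$ exist in $[-\infty,+\infty]$, and integrating the displayed identity between $-T$ and $T$ and letting $T \to +\infty$ yields
\[
\int_{-\infty}^{+\infty} \|\zeta({\bf z}^t)\|^2\,dt = \lim_{t\to+\infty} {\bf h}({\bf z}^t) - \lim_{t\to-\infty} {\bf h}({\bf z}^t),
\]
with both sides simultaneously finite or simultaneously equal to $+\infty$.

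For the inequality on the left, I would plug in the second inequality of Lemma \ref{le:gronwall}, which tells us
\[
\|\zeta({\bf z}^t)\| \geq e^{-A|t|} \|\zeta({\bf z})\| \quad \text{for every } t \in \R.
\]
Squaring and integrating over $\R$ gives
\[
\int_{-\infty}^{+\infty} \|\zeta({\bf z}^t)\|^2\,dt \;\geq\; \|\zeta({\bf z})\|^2 \int_{-\infty}^{+\infty} e^{-2A|t|}\,dt \;=\; \frac{\|\zeta({\bf z})\|^2}{A},
\]
which rearranges to the claimed $\|\zeta({\bf z})\|^2 \leq A \int_{-\infty}^{+\infty} \|\zeta({\bf z}^t)\|^2\,dt$. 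If the integral is $+\infty$ the inequality is trivial; otherwise the estimate above is valid directly.

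There is no serious obstacle. The only subtlety worth flagging is the justification for passing to improper limits in the integral, which is handled by the monotonicity of $t\mapsto {\bf h}({\bf z}^t)$. Everything else is a one-line consequence of Lemma \ref{le:gronwall} combined with the fact that $\bf h$ is a Lyapunov function for $\zeta$.
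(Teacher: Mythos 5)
Your proof is correct and follows the natural route the paper has in mind: the equality is the fundamental theorem of calculus applied to $t\mapsto{\bf h}({\bf z}^t)$ along the gradient flow, and the inequality is obtained by squaring the lower Gronwall estimate $e^{-A|t|}\Vert\zeta({\bf z})\Vert\leq\Vert\zeta({\bf z}^t)\Vert$ from Lemma~\ref{le:gronwall} and integrating, using $\int_{\R}e^{-2A|t|}\,dt=1/A$. The paper omits the proof (it is delegated to \cite{L2}), but your argument is exactly the intended application of Gronwall's Lemma, and you correctly flag the only subtlety, namely that the monotonicity of ${\bf h}$ along orbits makes the improper integral well defined (possibly $+\infty$, in which case the inequality is vacuous).
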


For every $i\in\Z$, define the maps $Q_i,\, P_i, \,Q'_i: E_b\to \R^2$, where 
$$ Q_i({\bf z})= (g_{i}(x_{i+1},y_{i}), y_i), \enskip P_i({\bf z})= (x_{i}, y_i), \enskip Q'_i({\bf z})=(x_{i},g'_{i-1}(x_{i},y_{i-1})).$$

Let us state the main properties of these maps:

\begin{itemize}
\item $f_i\circ Q_i=Q'_{i+1},$ 

\item  $\zeta_i=J\circ (Q'_i-Q_i)$ where
$J(x,y)=(-y,x)$,

\item  ${\bf z}\in E$ is a singularity of $\zeta$ if and only if $Q_i({\bf z})=Q'_{i}({\bf z})$ for every $i\in\Z$,

\item  if ${\bf z}\in E$ is a singularity of $\zeta$ then $Q_i({\bf z})=P_{i}({\bf z})=Q'_{i}({\bf z})$ for every $i\in\Z$, 

\item $Q_{1}$ induces a bijection between the singular set of
$\zeta$ and the fixed point set of $f^b$, 

\item the  sequence ${\bf 0}=(0)_{i\in\Z}$ is a singular point of $\zeta$ that is sent onto $0$ by each $Q_i$, $P_i$  or $Q'_i$,

\item $\zeta$ is $C^1$ in a neighborhood of ${\bf 0}$. 

\end{itemize}

\subsection{The case of an irrational pseudo-rotation} 
In this section we keep the notation of Section \ref{sse:vectorfield} but we suppose than $f$ is an irrational pseudo-rotation. We suppose moreover than $\beta\not\in\Q$ and that $(\alpha,\beta)\cap\Z=\emptyset$. The extension $f$  is a piecewise $C^1$ area preserving transformation that satisfies the following properties:

\smallskip
\noindent-\enskip\enskip $0$ is the unique fixed point of $f$;

\smallskip
\noindent-\enskip\enskip there is no periodic point of period $b$ if $(b \alpha, b \beta)\cap\Z=\emptyset$;

\smallskip
\noindent-\enskip\enskip if $(b \alpha, b \beta)\cap\Z\not=\emptyset$, the set of periodic points of period $b$ can be written $\bigcup_{\alpha<a/b<\beta} S_{a/b}$, where $S_{a/b}$ is the circle of center $0$ and radius $1+a/b- \alpha$.

\bigskip
In that case, we have the additional following properties for the vector $\zeta$ defined on $E_b$:

\begin{itemize}

\item the singular set consists of the constant sequence ${\bf 0}$ and of finitely many smooth closed curves $(\Sigma_a)_{a\in (b\alpha, b\beta)\cap {\Z}}$;

\item the curve $\Sigma_a$ is sent homeomorphically onto $S_{a/b}$ by each $Q_i$, $P_i$ or $Q'_i$;

\item $\xi$ is $C^{\infty}$ in a neighborhood of $\Sigma_a$. 

\end{itemize}

 We fix an integer $b\geq 2$ such that $(b\alpha, b\beta)\cap {\Z}\not=\emptyset$. If ${\bf z}$ and ${\bf z'}$ are two singular points of $\zeta$,  the quantity ${\bf h}({\bf z})-{\bf h}({\bf 0})$ is the difference of {\it action} between the two corresponding fixed points of $f^b$.  A computation gives us:

\begin{lemma} \label{le:computationaction}
For every ${\bf z}\in\Sigma_a$, one has
$${\bf h}({\bf z})-{\bf h}({\bf 0})=\pi(a-b \alpha) \left( 1 + (a/b -\alpha) +{(a/b-\alpha)^2\over 3}\right).$$\end{lemma}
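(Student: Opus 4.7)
The plan is to compute ${\bf h}({\bf z}) - {\bf h}({\bf 0})$ by identifying it with a symplectic action on the invariant circle $S_{a/b}$ and then evaluating explicitly using the integrable annular structure of $f$.

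First, since $\zeta = \nabla {\bf h}$ vanishes on the connected curve $\Sigma_a$, the function ${\bf h}$ is constant on $\Sigma_a$, so it suffices to evaluate on any representative. For ${\bf z} \in \Sigma_a$, the orbit $(z_j)_{j \in \Z}$ lies on $S_{a/b}$, which has radius $r = 1 + a/b - \al$. By construction each $f_i$ induces a rotation on $S_{a/b}$; write $z_j = r(\cos\phi_j, \sin\phi_j)$ with $\phi_{j+1} = \phi_j + 2\pi\theta_j'$, where $\theta_j'$ is the rotation angle of $f_{j \bmod m}$ on $S_{a/b}$, so $\sum_{j=1}^{m}\theta_j' = a/b$ (the $f_i$ being close to the identity). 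The orbit thus winds around $0$ exactly $a$ times during the $b$ iterates of $f$ that comprise its period.

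Second, recognize ${\bf h}$ as the discrete generating-function action functional associated to the isotopy $I = I_m \circ \cdots \circ I_1$. A classical identity (Chaperon's broken-geodesic method) says that at a critical point ${\bf z}$, ${\bf h}({\bf z})$ equals the symplectic action of the corresponding closed orbit of $f^b$ under $I^b$, up to additive constants fixed by the normalization of the $h_i$. Choosing $h_i(0,0)=0$ (so that ${\bf h}({\bf 0}) = 0$) pins the constant, and ${\bf h}({\bf z}) - {\bf h}({\bf 0})$ equals the difference of symplectic actions between the orbit on $S_{a/b}$ and the constant orbit at $0$.

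Third, compute the Hamiltonian. On the annulus $\{1 \le |z| \le 1+\beta-\al\}$ the relevant isotopy (homotopic rel endpoints to $I$) is $f_s(z) = R_{s(\al + |z|-1)}(z)$, with time-independent vector field $X(z) = 2\pi(\al + r - 1)(-y, x)$. The boundary condition $H|_{\S} = 0$ uniquely determines the Hamiltonian on the annulus: from $dH = 2\pi(\al+r-1)\,r\,dr$ one obtains
\[
H(r) = \pi(\al - 1)(r^2 - 1) + \tfrac{2\pi}{3}(r^3 - 1).
\]
With primitive $\kappa = \tfrac12(x\,dy - y\,dx)$ of $\omega$ and the orbit $\gamma$ on $S_{a/b}$ winding $a$ times around $0$, one obtains $\oint_\gamma \kappa = \pi a r^2$, so the action of $\gamma$ is $\pi a r^2 - b H(r)$. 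At the fixed point $0$, the action is $-bH(0)$; the normalization consistent with $h_i(0,0)=0$ forces $H(0) = -\pi\al$. Equivalently, it extends $H$ into $\D$ as if $f|_\D$ were the rotation $R_\al$, which is permissible because the convex-combination construction of the $f_{i,s}$ ensures the isotopy near $0$ is homotopic rel endpoints to the corresponding rotation isotopy.

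Finally, substituting $s = a/b - \al$ and $r = 1 + s$, one computes
\[
{\bf h}({\bf z}) - {\bf h}({\bf 0}) = \pi a r^2 - b\Bigl[\pi(\al - 1)(r^2 - 1) + \tfrac{2\pi}{3}(r^3 - 1) + \pi\al\Bigr] = \tfrac{\pi b}{3}(r^3 - 1) = \pi(a - b\al)\Bigl(1 + s + \tfrac{s^2}{3}\Bigr),
\]
which is the desired formula.

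The main obstacle is the third step, verifying the normalization $H(0) = -\pi\al$. This rests on the homotopy invariance of the symplectic action for isotopies rel endpoints together with the construction of $I$ from convex combinations of generating functions, whose behavior near $0$ depends only on the boundary rotation $R_\al$ and not on the specific pseudo-rotation inside $\D$; in turn this explains why the final expression depends only on $a$, $b$, and $\al$.
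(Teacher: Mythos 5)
Your strategy --- recognizing ${\bf h}({\bf z})-{\bf h}({\bf 0})$ as the difference of symplectic actions of the corresponding fixed points of $f^b$, and evaluating it via the Hamiltonian isotopy $I^b$ --- is sound, and the final arithmetic (with $r=1+s$, $s=a/b-\alpha$, and $a=b(r-1)+b\alpha$) is correct. The gap is in your third step, where you must evaluate the action of the constant orbit at $0$, namely $-\int_0^b H_s(0)\,ds$. You assert this equals $b\pi\alpha$ by setting $H(0)=-\pi\alpha$, ``as if $f|_\D$ were the rotation $R_\alpha$.'' But $(H_s)$ is uniquely determined by the actual isotopy $I$ (whose time-one map inside $\D$ is the genuine pseudo-rotation, not $R_\alpha$) together with the boundary condition $H_s|_\S=0$; it is time-dependent and its value at $0$ a priori depends on the internal dynamics of $f$. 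Your proposed justification --- that ``the isotopy near $0$ is homotopic rel endpoints to the corresponding rotation isotopy'' --- does not hold: $I|_\D$ and $T_\alpha$ have different time-one maps on the interior of $\D$, so they are not homotopic rel endpoints, and homotopy-invariance of the action of a fixed orbit yields nothing. Nor does the choice $h_i(0,0)=0$ constrain $H_s(0)$: it normalizes $\bf h$, but $H_s(0)$ is already pinned by $H_s|_\S=0$.

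Combined with your (correct) annular computation of $\pi a r^2 - bH(r)$, the missing identity $\int_0^b H_s(0)\,ds=-b\pi\alpha$ is equivalent to the statement that the mean action of the fixed point $0$, with the primitive of $f^*\kappa-\kappa$ normalized to vanish on $\S$, equals zero. That is the nontrivial content of the lemma; asserting it as an input is where the argument breaks. Note that this paper itself gives no proof of the lemma (it states ``a computation gives us'' and defers to \cite{L2}), so there is no proof in this text to check against, but any proof must address exactly the point you pass over, e.g.\ by a direct computation with the generating functions $h_i$ along the annular orbit rather than by invoking a pre-determined normalization of the Hamiltonian at $0$.
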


We will denote 
$$C(a,b)=\pi(a-b \alpha) \left( 1 + (a/b -\alpha) +{(a/b-\alpha)^2\over 3}\right).$$

Now let us state the fundamental result of \cite{L2}:

\begin{proposition}\label{pr:invariantdisk}
The curve $\Sigma_a$ bounds a topological disk $\Delta_{a}\subset E_b$ that satisfies the following:

\smallskip 
\noindent{\bf i)}\enskip $\Delta_{a}$ contains the constant sequence ${\bf 0}$;

\smallskip 
\noindent{\bf ii)}\enskip $\Delta_{a}$ is invariant by $\varphi$;

\smallskip 
\noindent{\bf iii)}\enskip each projection ${\bf z}\mapsto (x_i,y_{i-1})$, $i\in\Z$, is one to one on $\Delta_{a}$;

\smallskip 
\noindent{\bf iv)}\enskip each projection ${\bf z}\mapsto (x_i,y_i)$, $i\in\Z$,  is one to one on $\Delta_{a}$;

\smallskip 
\noindent{\bf v)}\enskip $\Delta_{a}$ is invariant by the flow of $\zeta$;

\smallskip 
\noindent{\bf vi)}\enskip for every ${\bf z}\in \Delta_a^*=\Delta_{a}\setminus (\{{\bf 0}\}\cup\Sigma_a)$, one has $\lim_{t\to-\infty} {\bf z}^t={\bf 0}$ and $\lim_{t\to+\infty} d({\bf z}^t, \Sigma_a)=0$. 

\end{proposition}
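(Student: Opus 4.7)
Since $\zeta$ is the gradient of $\mathbf{h}$ (Lemma \ref{le:gradient}), we have a Morse-Bott type picture where $\mathbf{0}$ is a local minimum of $\mathbf{h}$ (a source for the flow) and each $\Sigma_a$ is a circle of local maxima (attractors in their normal bundle for the flow of $\zeta$). The set one wants is essentially the basin of connection between $\mathbf{0}$ and the specific critical circle $\Sigma_a$, singled out among the others. My plan is to build $\Delta_a$ by combining three ingredients: local center-stable manifold theory at the critical sets, an explicit integrable construction in the annular region where $f$ is a polar twist, and a topological argument inside $\mathbb{D}^\circ$ that uses the pseudo-rotation hypothesis crucially.

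\textbf{Main steps.} First, linearize $\zeta$ at a point of $\Sigma_a$. Because $f$ is a rotation in a neighborhood of $S_{a/b}$, the system is smooth and explicitly integrable there; I expect to find that the Hessian of $\mathbf{h}$ normal to $\Sigma_a$ has a single distinguished direction producing a two-dimensional local piece of a ``tongue'' approaching $\Sigma_a$ in forward time. Second, construct the portion of $\Delta_a$ corresponding to sequences whose components all lie in the integrable annulus $\{1\le|z|\le r_0\}$: for each radius $r\in[1,1+a/b-\alpha]$, the integrable lift defines a curve $\Sigma_r\subset E_b$ (which is $\Sigma_{a''}$ when $r=1+a''/b-\alpha$), and their union is a topological half-open annulus invariant by $\varphi$ and by the flow. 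Third, extend inside $\mathbb{D}^\circ$: saturate the tongue from Step 1 by the backward flow, and use that $\mathbf{0}$ is the unique singular point of $\zeta$ inside the preimage of $\mathbb{D}^\circ$ (this is where the pseudo-rotation assumption enters) to show the saturation closes up at $\mathbf{0}$ and yields a topological disk. Glue this to the annular piece across $\Sigma_1$ (the lift of the unit circle) to obtain $\Delta_a$.

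\textbf{Verification and obstacle.} Property (i) is by construction; (ii) follows from the $\varphi$-equivariance of all the data defining the construction; (v) and (vi) are built-in by defining $\Delta_a$ as the $\alpha(\mathbf{0})$--$\omega(\Sigma_a)$ connection set together with the limit points themselves. For (iii) and (iv), the injectivity of the two projections $\mathbf{z}\mapsto(x_i,y_{i-1})$ and $\mathbf{z}\mapsto(x_i,y_i)$: these are clear on the integrable annular piece (each $\Sigma_r$ is sent homeomorphically to $S_r$) and should extend by a continuity/monotonicity argument leveraging the untwisted structure of each $f_i$; the composition with $Q_1$ identifies $\Delta_a$ homeomorphically with $D_{a/b}$. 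The expected main obstacle is Step 3: inside $\mathbb{D}^\circ$ there is no integrability to rely on, and one must produce a two-dimensional flow-invariant topological disk in a $2mb$-dimensional space whose unstable set of $\mathbf{0}$ has much larger dimension. Ruling out that backward trajectories escape or accumulate on spurious invariant sets---and proving the projections stay injective on this non-integrable piece---seems to require a Brouwer-type or degree-theoretic argument specific to the pseudo-rotation hypothesis that $f^b$ has no fixed point in $\mathbb{D}^\circ$ other than $0$.
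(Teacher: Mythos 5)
You correctly identify the structure of what the proposition asserts — a two-dimensional, flow-invariant, $\varphi$-invariant disk realizing a north–south connection from $\mathbf{0}$ to $\Sigma_a$ on which two natural projections are injective — and you are also right to flag your ``Step 3'' as the crux. But the proposal, as it stands, has a genuine gap exactly there, and it also starts from an imprecise Morse-Bott picture. The point $\mathbf{0}$ is not a local minimum of $\mathbf{h}$: the quadratic part $\sum x_iy_i - h_{i-1}(x_i,y_{i-1})$ is indefinite, so $\mathbf{0}$ is a saddle for the gradient flow of $\mathbf{h}$, with an unstable set of much higher dimension than $2$; likewise $\Sigma_a$ is not a circle of local maxima. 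The ``source/attractor'' picture is only true \emph{after restricting to} $\Delta_a$, which is exactly the object one must first construct. So saturating a ``tongue'' by the backward flow and hoping it closes up at $\mathbf{0}$ does not single out a $2$-disk inside a $2mb$-dimensional space, and you acknowledge you have no argument that it does.

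The missing idea in the paper (taken from \cite{L2}, and going back to \cite{L1}) is the integer-valued linking function $L$ on the cone $V'\subset E_b$ of ``non-aligned'' differences, defined by $L(\mathbf{z})=\tfrac14\sum_{0<i\le mb}\mathrm{sign}(x_i)(\mathrm{sign}(y_i)-\mathrm{sign}(y_{i-1}))$, together with the strict monotonicity property stated as Proposition~\ref{pr:linking}: whenever $\mathbf{z}'-\mathbf{z}$ leaves $V'$, the flow of $\zeta$ strictly increases $L(\mathbf{z}'^t-\mathbf{z}^t)$ across the crossing, for small $|t|$. (An infinitesimal version, Proposition~\ref{pr:infinitesimallinking}, handles $\zeta(\mathbf{z})$ directly.) This plays the role that the twist-map intersection/linking monotonicity plays in Aubry–Mather theory, and is what allows one to filter the unstable set of $\mathbf{0}$ by the value of $L$ and isolate the slice with $L=a$ as $\Delta_a$. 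Once this is in place, the injectivity claims {\bf iii)} and {\bf iv)} fall out for free: Proposition~\ref{pr:invariantdisklinkingnumber} says that any two distinct points $\mathbf{z},\mathbf{z}'\in\Delta_a$ satisfy $\mathbf{z}-\mathbf{z}'\in V'$ and $L(\mathbf{z}-\mathbf{z}')=a$, and belonging to $V'$ forces $x_i\neq x_i'$ and $y_i\neq y_i'$ for all $i$, hence the projections $\mathbf{z}\mapsto(x_i,y_{i-1})$ and $\mathbf{z}\mapsto(x_i,y_i)$ are injective. Your proposal tries to get {\bf iii)}–{\bf iv)} from ``continuity/monotonicity leveraging the untwisted structure'' without any such invariant; this is exactly the place where a concrete device such as $L$ is needed, and a generic Morse-Bott or center-manifold argument does not supply it. In short: the geometry you describe is right, but the proof mechanism — the linking number $L$ on $V'$ and its strict increase along the gradient flow — is absent, and without it the construction of $\Delta_a$ and the injectivity claims remain unproved.
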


Let us explain more precisely what is proved in \cite{L2}.
 In what follows, the function $\mathrm{sign}$ assigns $+1$ to a positive number and $-1$ to a negative number.

Let us consider the set
$$V=\{{\bf z}\in E_b\enskip\vert\,
x_i\not=0\enskip \mathrm{and}\enskip  y_i\not=0\enskip{\rm for\enskip all}\enskip i\in\Z\}$$ and the function $L$ on $V$ defined by the formula
$$\eqalign{L({\bf z})&={1\over 4}\sum_{0<i\leq mb} \mathrm{sign}( x_i) \left( \mathrm{sign}(y_i)-\mathrm{sign} (y_{i-1})\right),\cr
&= {1\over 4}\sum_{0<i\leq mb}  \mathrm{sign}( y_i) \left( \mathrm{sign}(x_i)-\mathrm{sign} (x_{i+1})\right).\cr}$$
It extends
continuously to the open set 
$$V' =\{{\bf z}\in E_b\enskip\vert\enskip x_i=0\Rightarrow y_{i-1}y_{i}>0, \enskip  y_i=0\Rightarrow x_{i}x_{i+1}>0\}.$$
It is integer valued and takes its values in $\{-[mb/2],\dots, [mb/2]\}$, where the notation $[x]$ denotes the integer part of a real number $x$.  The assertion {\bf iii)} and {\bf iv)} are immediate consequences of the following fact:

\begin{proposition}\label{pr:invariantdisklinkingnumber} If ${\bf z}$ and ${\bf z'}$ are two different points of $\Delta _a$, then ${\bf z}-{\bf z'}\in V'$ and $L({\bf z}-{\bf z'})=a$. \end{proposition}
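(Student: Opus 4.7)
The plan is to exploit the openness of $V'$ and the fact that $L$ extends continuously to an integer-valued function there. Let $\mathcal U \subset \Delta_a \times \Delta_a$ denote the open set of pairs $({\bf z},{\bf z}')$ with ${\bf z}\neq{\bf z}'$ and ${\bf z}-{\bf z}'\in V'$. On $\mathcal U$ the map $({\bf z},{\bf z}')\mapsto L({\bf z}-{\bf z}')$ is $\Z$-valued and continuous, hence locally constant on each component. So the proof reduces to (i) exhibiting one pair at which $L=a$ and (ii) showing $\mathcal U$ exhausts all pairs of distinct points of $\Delta_a$.

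For (i) I would argue directly on the boundary. Each $f_i$ acts as a rotation of $S_{a/b}$ by some angle $\alpha_i$ with $\sum_{i=1}^m \alpha_i = 2\pi a/b$, so $\Sigma_a$ is parametrised by the initial angle $\theta$ of the corresponding orbit:
\[
z_i(\theta) = r\bigl(\cos(\theta+\beta_i),\sin(\theta+\beta_i)\bigr),\quad \beta_i=\sum_{j<i}\alpha_j,\quad \beta_{mb}=2\pi a.
\]
For $\theta\not\equiv\theta'\pmod{2\pi}$, the sum-to-product identities give
\[
z_i(\theta)-z_i(\theta') = 2r\sin\tfrac{\theta-\theta'}{2}\bigl(-\sin(\tfrac{\theta+\theta'}{2}+\beta_i),\,\cos(\tfrac{\theta+\theta'}{2}+\beta_i)\bigr),
\]
a never-vanishing vector in $\R^2$ whose argument increases by exactly $\beta_{mb}-\beta_0=2\pi a$ as $i$ runs from $0$ to $mb$. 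In particular ${\bf z}(\theta)-{\bf z}(\theta')\in V\subset V'$, and a count of half-crossings of the axes yields $L({\bf z}(\theta)-{\bf z}(\theta'))=a$. This both supplies the base pair for (i) and shows $\mathcal U$ contains every pair of distinct boundary points.

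For (ii), the natural attempt is to use the gradient flow of $\zeta$: by Proposition~\ref{pr:invariantdisk}(v)--(vi), a pair $({\bf z},{\bf z}')$ of distinct interior points is carried forward to a pair of points converging to $\Sigma_a\times\Sigma_a$. If the two $\omega$-limits on $\Sigma_a$ were distinct, then for large $t$ the flowed pair would lie in $\mathcal U$ by openness of $V'$ together with paragraph~two, and the local constancy of $L$ on $\mathcal U$ would propagate the equality $L=a$ backwards, provided the entire forward trajectory of the pair never left $\mathcal U$. The obstacle I expect to be the main difficulty is thus twofold: showing that the two $\omega$-limits are distinct (a flow-injectivity statement dual to assertions (iii)--(iv) of Proposition~\ref{pr:invariantdisk}), and ruling out that the difference ${\bf z}^t-{\bf z}'{}^t$ transiently falls into the closed complement of $V'$. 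Both appear to be tautologous with what we are proving, so an abstract flow argument will not suffice; one must return to the concrete description of $\Delta_a$ furnished in \cite{L2}.

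I would therefore expect the actual proof to proceed by building $\Delta_a$ as a controlled limit of approximating invariant disks (say, produced from the finite-dimensional broken-geodesic approximations underlying the generating-function construction of $\zeta$) on which the linking condition ${\bf z}-{\bf z}'\in V'$ and the value $L=a$ can be checked directly by an inductive or explicit computation. Since $V'$ is open and $L$ is integer-valued on $V'$, both properties pass to the Hausdorff limit in $E_b$, giving the statement on the whole of $\Delta_a$. The heart of the matter is precisely this transversality estimate on the approximations, ensuring no pair of distinct points ever has its difference drift into the complement of $V'$ in the limiting process.
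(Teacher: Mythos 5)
Your first two steps are sound: $V'$ is open, $L$ is continuous and $\Z$-valued (hence locally constant) there, and your parametrization of $\Sigma_a$ showing $L({\bf z}(\theta)-{\bf z}(\theta'))=a$ for distinct boundary points is correct. But, as you yourself acknowledge, step (ii) does not close: propagating $L=a$ backward along the gradient flow is circular if all you know is local constancy of $L$ on $V'$, since nothing a priori prevents the difference ${\bf z}^t-{\bf z}'{}^t$ from leaving $V'$ and re-entering it at a different value of $L$. Your proposed rescue---constructing $\Delta_a$ as a Hausdorff limit of approximating invariant disks and passing to the limit---is not the mechanism used in \cite{L2}, and on its own a bare approximation would not control how often the difference crosses the complement of $V'$ in the limit.

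The ingredient you are missing is stated two displays below, namely Proposition~\ref{pr:linking}: whenever ${\bf z}'-{\bf z}\notin V'$, there is $\varepsilon>0$ such that for all $t\in(0,\varepsilon]$ both ${\bf z}'{}^{\pm t}-{\bf z}^{\pm t}$ lie in $V'$, with $L({\bf z}'{}^{-t}-{\bf z}^{-t})<L({\bf z}'{}^{t}-{\bf z}^{t})$. This turns $t\mapsto L({\bf z}'{}^t-{\bf z}^t)$ into a monotone, $\Z$-valued quantity bounded in $\{-[mb/2],\dots,[mb/2]\}$: it is locally constant off a discrete exceptional set and strictly jumps up at each exceptional time, so there are finitely many of them. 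The circularity you flagged disappears---$L$ genuinely cannot drift. The argument in \cite{L2} then pins this monotone quantity to $a$ at both extremities of the flow line (the $\omega$-limit near $\Sigma_a$, via your boundary computation when the two limits are distinct and a linearization using the smoothness of $\zeta$ near $\Sigma_a$ when they coincide, and the $\alpha$-limit at ${\bf 0}$, where $\zeta$ is $C^1$), forcing $L\equiv a$ along the entire orbit of the pair and in particular at $t=0$. Note finally that the present paper does not reprove this statement at all: the opening of Section~3 explicitly defers the proofs of the first three subsections to \cite{L2}, so what was needed was a reconstruction of that argument rather than a reading of the present text.
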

\medskip

The fundamental result that permits to construct $\Delta _a$ is the following:

\begin{proposition}\label{pr:linking} If
${\bf z}$, 
${\bf z}$ are two distinct points of $E_b$ satisfying ${\bf z}'-{\bf z}\not\in V'$, then there exists $\varepsilon >0$ such
that for every $t\in(0,\varepsilon]$, it holds that:
$${\bf z'}^{-t}-{\bf z}^{-t}\in V', \enskip {\bf z'}^{t}-{\bf z}^{t}\in V', \enskip L({\bf z'}^{-t} -{\bf z}^{-t})<L({\bf z'}^{t} -{\bf z}^{t}).$$\end{proposition}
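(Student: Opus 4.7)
The plan is to track the signs of the coordinates of ${\bf w}^{t}:={\bf z}'^{\,t}-{\bf z}^{t}$ across the critical time $t=0$ and to read off the change of $L$ from their jumps. Writing $w_i^{t}=(\Delta x_i(t),\Delta y_i(t))$, the componentwise dynamics take the form
\[
\dot{\Delta x_i}=\Delta y_i-\bigl(g'_{i-1}(x_i+\Delta x_i,y_{i-1}+\Delta y_{i-1})-g'_{i-1}(x_i,y_{i-1})\bigr),
\]
\[
\dot{\Delta y_i}=\Delta x_i-\bigl(g_i(x_{i+1}+\Delta x_{i+1},y_i+\Delta y_i)-g_i(x_{i+1},y_i)\bigr),
\]
and the proof will repeatedly use the fact, built into the $K$-Lipschitz untwisted condition, that $y\mapsto g'_{i-1}(x,y)$ and $X\mapsto g_i(X,y)$ are strictly increasing homeomorphisms of $\R$ with Lipschitz inverses.

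Only the indices where a coordinate of $w_i^{0}$ vanishes can contribute to the jump of $L$ across $t=0$; at every other index the signs of $\Delta x_i$ and $\Delta y_i$ are locally constant. First I would dispose of the \emph{good} vanishing indices, namely those at which the positivity condition defining $V'$ still holds. In the representative case $\Delta x_i(0)=0$ with $\Delta y_{i-1}(0),\Delta y_i(0)$ of the same nonzero sign, the $i$-th summand $\tfrac14\mathrm{sign}(\Delta x_i)(\mathrm{sign}(\Delta y_i)-\mathrm{sign}(\Delta y_{i-1}))$ vanishes for all small $|t|\ne 0$ whatever the sign of $\Delta x_i(t)$ turns out to be, and the $V'$ condition at $i$ is automatically satisfied on a punctured neighbourhood of $0$. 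Next I would treat the \emph{bad} vanishing indices, where the $V'$ condition fails; the hypothesis ${\bf w}^{0}\notin V'$ produces at least one of these. In the model case $\Delta x_i(0)=0$ with $\Delta y_{i-1}(0)>0>\Delta y_i(0)$, strict monotonicity of $g'_{i-1}$ forces $\dot{\Delta x_i}(0)<0$, so $\Delta x_i(t)>0$ for small $t<0$ and $\Delta x_i(t)<0$ for small $t>0$; the $i$-th summand then jumps from $-1/2$ to $+1/2$, contributing $+1$ to $L({\bf w}^{t})-L({\bf w}^{-t})$. The three remaining sign patterns and the symmetric case $\Delta y_i(0)=0$ are handled identically after obvious relabelling, each bad index contributing exactly $+1$.

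The main obstacle is the compound degeneracy where several coordinates vanish simultaneously at $t=0$, the cleanest instance being $\Delta x_i(0)=\Delta y_i(0)=0$. There the two dynamical equations above give $\dot{\Delta x_i}(0)$ of sign $-\mathrm{sign}(\Delta y_{i-1}(0))$ and $\dot{\Delta y_i}(0)$ of sign $-\mathrm{sign}(\Delta x_{i+1}(0))$, so both coordinates flip sign through $0$ and ${\bf w}^{\pm t}$ lies locally in $V$. A direct computation then shows that the $i$-th and $(i+1)$-th summands of $L$ each jump by $+1/2$, with sum $+1$; the generalisation to clusters of consecutive vanishing coordinates of $w^0$ follows by an analogous bookkeeping, and the delicate point is to check that the jumps coming from neighbouring indices never cancel. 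Assembling the local analyses over all indices yields ${\bf w}^{\pm t}\in V'$ on a small punctured interval around $0$, together with a total jump $L({\bf w}^{t})-L({\bf w}^{-t})$ equal to a strictly positive integer, which is the desired strict inequality.
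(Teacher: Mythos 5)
Your sign-tracking strategy---computing $\dot{\Delta x_i}(0)$ and $\dot{\Delta y_i}(0)$ from the strict monotonicity built into the untwisted condition, reading off how the coordinate signs flip across $t=0$, and summing the resulting jumps of $L$---is the right one and is essentially the argument behind the cited references. It works cleanly for an isolated ``bad'' vanishing coordinate and for a pair $\Delta x_i(0)=\Delta y_i(0)=0$. But the step you call ``the delicate point'' is a genuine gap, not a routine extension. Take a cluster $\Delta x_i(0)=\Delta y_i(0)=\Delta x_{i+1}(0)=0$ with $\Delta y_{i-1}(0),\Delta y_{i+1}(0)\ne 0$. Your own difference equations give $\dot{\Delta y_i}(0)=\Delta x_i(0)-\bigl(g_i(x_{i+1}+\Delta x_{i+1},y_i)-g_i(x_{i+1},y_i)\bigr)=0$, so the sign of $\Delta y_i(t)$ for small $t\ne 0$ is not determined at first order, and there is no second order to appeal to since $\zeta$ is only Lipschitz. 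This is not a harmless loose end: if $\mathrm{sign}(\Delta y_{i-1}(0))=\mathrm{sign}(\Delta y_{i+1}(0))=a$, then $\Delta x_i(t)\Delta x_{i+1}(t)<0$ for all small $t\ne 0$, so membership of ${\bf w}^{\pm t}$ in $V'$ already \emph{requires} $\Delta y_i(t)\ne 0$; and the total jump computes to $1-\tfrac{a}{2}\bigl(\mathrm{sign}\Delta y_i(t^+)+\mathrm{sign}\Delta y_i(t^-)\bigr)$, which is $0$ if $\Delta y_i$ fails to flip sign---so the strict inequality in the conclusion is precisely what is at stake.

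What closes the gap is a comparison (Gronwall-type) estimate, not ``analogous bookkeeping.'' Using that $X\mapsto g_i(X,y)$ is $K$-bi-Lipschitz increasing and $y\mapsto g_i(X,y)$ is $K$-Lipschitz, one gets, for small $t>0$, $\dot{\Delta y_i}(t)\le \Delta x_i(t)-K^{-1}\vert\Delta x_{i+1}(t)\vert+K\vert\Delta y_i(t)\vert$; since $\Delta x_i$ and $\Delta x_{i+1}$ have nonvanishing derivatives at $0$ (they are the endpoints of the cluster), the first two terms are of order $-t$ and Gronwall forces $\Delta y_i(t)<0$ for small $t>0$ when $a=+1$, with the symmetric estimate for $t<0$. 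Longer clusters require iterating this estimate inward, establishing that the forcing propagates from the two nondegenerate ends. Supplying this argument is exactly what your outline defers, and without it neither the membership ${\bf w}^{\pm t}\in V'$ nor the strict positivity of the jump is established.
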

\bigskip

This result admits an infinitesimal version (see \cite{L1}):

\begin{proposition}\label{pr:infinitesimallinking}  If
${\bf z}\in E_b$ is non singular and satisfies $\zeta({\bf z})\not\in W$, then there exists $\varepsilon >0$ such
that for every $t\in(0,\varepsilon]$, it holds that:
$$\zeta({\bf z'}^{-t})\in V', \enskip\zeta({\bf z'}^{t})\in V', \enskip L(\zeta({\bf z'}^{-t}))<L(\zeta({\bf z'}^{t}))$$and
$${\bf z'}^{-t}-{\bf z}\in V', \enskip {\bf z'}^{t}-{\bf z}\in V', \enskip L({\bf z'}^{-t} -{\bf z})<L({\bf z'}^{t} -{\bf z}).$$\end{proposition}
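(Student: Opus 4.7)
The plan is to treat this as the infinitesimal counterpart of Proposition~\ref{pr:linking}: the role played there by the chord ${\bf z}'-{\bf z}$ is taken here either by the tangent vector $\zeta({\bf z})$ (first assertion) or by the short-time chord ${\bf z}^t-{\bf z}$, whose first-order Taylor coefficient in $t$ is exactly $\zeta({\bf z})$ (second assertion). Before starting, I note a typographical slip in the statement: the set $W$ was defined in Section~\ref{se:Calabi} as a subset of $D^*\times D^*$, so the hypothesis must read $\zeta({\bf z})\notin V'$, and the points ${\bf z}'^{\pm t}$ in the displayed inequalities should be ${\bf z}^{\pm t}$ (there is only one base point).

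I will handle both assertions in parallel, by Taylor-expanding the two quantities coordinate by coordinate. At any index $i$ where none of the coordinates involved in the $V'$-defining inequalities vanishes at ${\bf z}$, continuity makes the corresponding signs of both $\zeta({\bf z}^t)$ and of ${\bf z}^t-{\bf z}$ locally constant, so the $V'$-conditions are preserved there and the contribution to $L$ does not change. At a critical index, say one with $\xi_i({\bf z})=0$ (the case $\eta_i({\bf z})=0$ is symmetric), the hypothesis $\zeta({\bf z})\notin V'$ gives $\eta_{i-1}({\bf z})\eta_i({\bf z})\leq 0$. Differentiating $\xi_i=y_i-g'_{i-1}(x_i,y_{i-1})$ along the flow $\dot x_i=\xi_i$, $\dot y_i=\eta_i$ yields at $t=0$
\[
\dot\xi_i\big|_0 = \eta_i - (\partial_1 g'_{i-1})\,\xi_i - (\partial_2 g'_{i-1})\,\eta_{i-1} = \eta_i - (\partial_2 g'_{i-1})\,\eta_{i-1}.
\]
Since $y\mapsto g'_{i-1}(X,y)$ is an orientation-preserving $K$-bi-Lipschitz homeomorphism, $\partial_2 g'_{i-1}\geq 1/K>0$ almost everywhere; with $\eta_{i-1}\eta_i\leq 0$ this forces $\dot\xi_i|_0$ to be non-zero, of sign $\mathrm{sign}(\eta_i)$, whenever $\eta_{i-1},\eta_i$ are not both zero. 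For the first assertion, $\xi_i({\bf z}^t)=t\,\dot\xi_i|_0+o(t)$ has opposite signs for $t>0$ and $t<0$; for the second, integrating once more gives $x_i(t)-x_i=\tfrac{1}{2}t^2\,\dot\xi_i|_0+o(t^2)$, whose sign is that of $\dot\xi_i|_0$ regardless of the sign of $t$. A direct sign count with the defining formula for $L$ (each transition in $\mathrm{sign}(y_{i-1})$ versus $\mathrm{sign}(y_i)$ weighted by $\mathrm{sign}(x_i)$) then gives a contribution of $+1/2$ at time $t>0$ and $-1/2$ at time $-t$, hence a jump of exactly $+1$ in both $L({\bf z}^t-{\bf z})-L({\bf z}^{-t}-{\bf z})$ and $L(\zeta({\bf z}^t))-L(\zeta({\bf z}^{-t}))$ per critical index.

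Summing over critical indices delivers the strict inequalities, and the non-vanishing of the leading Taylor coefficients at each critical coordinate shows that ${\bf z}^{\pm t}-{\bf z}$ and $\zeta({\bf z}^{\pm t})$ both lie in $V'$ for $t\in(0,\varepsilon]$. The main obstacle will be the doubly degenerate configurations where $\xi_i({\bf z})=\eta_{i-1}({\bf z})=\eta_i({\bf z})=0$ (and their symmetric analogues), for which $\dot\xi_i|_0=0$ and one must iterate the ODE to reach the first non-vanishing Taylor coefficient. Since $g_i,g'_i$ are only Lipschitz (being partial derivatives of the $C^1$-functions $h_i$ with Lipschitz derivative), higher derivatives exist only almost everywhere, and ordinary point-derivatives must be replaced by one-sided Dini derivatives, or one must smooth the $h_i$ and pass to the limit as in \cite{L1}. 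Even in these degenerate cases the monotonicity of $X\mapsto g_i(X,y)$ and $y\mapsto g'_i(X,y)$ (the twist captured by the untwisted-homeomorphism hypothesis) should force the correct sign of the leading coefficient, but verifying this requires careful bookkeeping.
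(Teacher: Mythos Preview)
The paper does not give its own proof of this proposition: it simply records it as the infinitesimal analogue of Proposition~\ref{pr:linking} and refers to \cite{L1}. Your approach --- Taylor-expanding $\zeta({\bf z}^t)$ and ${\bf z}^t-{\bf z}$ coordinate by coordinate, using the monotonicity of $y\mapsto g'_{i-1}(X,y)$ (resp.\ $X\mapsto g_i(X,y)$) to fix the sign of the first non-vanishing coefficient at a critical index, and then reading off the $+1$ jump of $L$ --- is the natural one and is exactly the strategy carried out in \cite{L1}. Your identification of the typographical slips ($W$ should be $V'$, and ${\bf z}'$ should be ${\bf z}$) is also correct.

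Two remarks on the difficulties you flag. First, the regularity issue is genuine: since $g_i,g'_i$ are only Lipschitz, $\dot\xi_i$ is not defined pointwise, and in \cite{L1} this is handled not by Dini derivatives but by working with the finite-difference inequalities that the bi-Lipschitz bounds on $g_i,g'_i$ provide directly (e.g.\ from $K^{-1}\vert y-y'\vert\leq \vert g'_{i-1}(X,y)-g'_{i-1}(X,y')\vert$ one obtains sign information on $\xi_i({\bf z}^t)-\xi_i({\bf z})$ without ever differentiating $g'_{i-1}$). Second, the degenerate configurations (several consecutive vanishing $\xi_j,\eta_j$) are indeed the crux: one must propagate the sign argument along a maximal block of zeros, and the non-singularity of ${\bf z}$ guarantees such a block is finite and bordered by nonzero coordinates that seed the induction. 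Your sketch is correct in spirit but stops short of this combinatorial step; the full argument in \cite{L1} organises it as a case analysis on the pattern of vanishing coordinates within a block.
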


As an immediate corollary of the second assertion, one gets the following result (not explicitely stated in \cite{L2})

\begin{corollary}\label{co:linkinginfinitesimal} If ${\bf z}\in\Delta _a^*$, then $\zeta({\bf z})\in V'$ and $L(\zeta({\bf z}))=a$. \end{corollary}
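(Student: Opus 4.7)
The plan is to argue by contradiction, combining the second assertion of Proposition \ref{pr:infinitesimallinking} with the constancy of $L$ on differences of distinct points of $\Delta_a$ granted by Proposition \ref{pr:invariantdisklinkingnumber}.

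Fix ${\bf z}\in\Delta_a^*$. Since the singularities of $\zeta$ inside $\Delta_a$ are exactly $\{{\bf 0}\}\cup\Sigma_a$, the point ${\bf z}$ is non-singular; by item {\bf v)} of Proposition \ref{pr:invariantdisk} the orbit $({\bf z}^t)_{t\in\R}$ stays in $\Delta_a$, and since an orbit cannot reach a singular point in finite time, in fact ${\bf z}^t\in \Delta_a^*$ for every $t\in\R$. Consequently, for every small $t>0$ the three points ${\bf z}^{-t}, {\bf z}, {\bf z}^t$ lie in $\Delta_a$ and are pairwise distinct, and Proposition \ref{pr:invariantdisklinkingnumber} gives
\[
L({\bf z}^t-{\bf z}) \;=\; L({\bf z}-{\bf z}^{-t}) \;=\; a.
\]
Since $L$ depends only on the signs of the coordinates, one checks directly that $L(-{\bf w})=L({\bf w})$, so $L({\bf z}^{-t}-{\bf z})=a$ as well.

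Now suppose, for a contradiction, that $\zeta({\bf z})\not\in V'$. Since ${\bf z}$ is non-singular, the second assertion of Proposition \ref{pr:infinitesimallinking} yields $\varepsilon>0$ such that for every $t\in(0,\varepsilon]$ both ${\bf z}^{-t}-{\bf z}$ and ${\bf z}^t-{\bf z}$ lie in $V'$ and satisfy $L({\bf z}^{-t}-{\bf z}) < L({\bf z}^t-{\bf z})$. Combined with the previous computation this gives $a<a$, a contradiction; hence $\zeta({\bf z})\in V'$. To evaluate $L(\zeta({\bf z}))$, note that $L$ is invariant under positive scaling (it only sees signs), so the vectors $w_t=({\bf z}^t-{\bf z})/t$ lie in $V'$ with $L(w_t)=a$ for all small $t>0$. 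As $t\to 0^+$ one has $w_t\to\zeta({\bf z})\in V'$, and since $V'$ is open and $L$ is continuous on $V'$, we conclude $L(\zeta({\bf z}))=a$.

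The main subtlety is recognizing the evenness of $L$, so that both the forward and backward difference vectors contribute the same value $a$; together with the fact that the two-sided orbit of ${\bf z}$ never escapes $\Delta_a^*$, this is what generates the rigidity that clashes with the strict inequality supplied by Proposition \ref{pr:infinitesimallinking}. Once this is in place the conclusion $L(\zeta({\bf z}))=a$ is merely a continuity and homogeneity statement.
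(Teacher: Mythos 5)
Your proof is correct and fills in exactly the argument the paper gestures at when it calls the corollary ``an immediate corollary of the second assertion'' of Proposition \ref{pr:infinitesimallinking}: you combine the strict inequality $L({\bf z}^{-t}-{\bf z})<L({\bf z}^{t}-{\bf z})$ (which would hold if $\zeta({\bf z})\notin V'$) with the constancy $L=a$ on differences of distinct points of $\Delta_a$ from Proposition \ref{pr:invariantdisklinkingnumber}, then finish by homogeneity and local constancy of $L$ on the open set $V'$. One small remark: the detour through $L(-{\bf w})=L({\bf w})$ is not strictly needed, since Proposition \ref{pr:invariantdisklinkingnumber} already applies to the ordered pair $({\bf z}^{-t},{\bf z})$ to give $L({\bf z}^{-t}-{\bf z})=a$ directly; but invoking the evenness is harmless and in fact explains why that proposition is order-insensitive in the first place.
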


The assertion {\bf iii)} tells us that the maps $Q_i\vert_{\Delta_{a}}$ and $Q'_i\vert_{\Delta_{a}}$, $i\in\Z$, induce homeomorphisms from $\Delta_{a}$ to $D_{a/b}$ denoted respectively $q_i$ and $q'_i$ \footnote{ We do not refer to $a$ to lighten the notations.}. The assertion {\bf iv)} tells us that the maps $P_i\vert_{\Delta_{a}}$, $i\in\Z$, induce homeomorphisms from $\Delta_{a}$ to $D_{a/b}$ denoted $p_i$. Note that $p_i\circ q_i^{-1}$ is a homeomorphism which let invariant every horizontal segment of $D_{a/b}$ and induces on this segment an increasing homeomorphism and similarly $p_i\circ q'{}_i^{-1}$ is a homeomorphism which let invariant every vertical segment of $D_{a/b}$ and induces on this segment an increasing homeomorphism. We denote ${\mathcal F}$ the radial foliation defined on $\Delta_a^*$ whose leaves are the gradient lines of $\zeta$ included in $\Delta_a^*$.

For every $s\in[0,1]$, and every $i\in\Z$ we define 
$$q^s_i= (1-s)q_i +s p_i,  \enskip q'{}^{s}_i= (1-s) p_i+ s q'_i.$$ The maps  $q^s_i$ and $q'{}^{s}_i$ are homeomorphisms from $\Delta_{a}$ to $D_{a/b}$ (all coinciding on $\Sigma_a$) and send $\mathcal F$ onto a radial foliation of $D_{a/b}$.

\subsection{Projected radial foliations} Recall that $d$ denotes the winding distance between two radial foliations. Let us begin with the following result:
\begin{lemma} \label{le:distancefoliations}
 For every $s_1, s_2$ in $[0,1]$ and every $i\in\Z$, we have
$$d(q_i^{s_1}(\mathcal F), q_i^{s_2}(\mathcal F))\leq 2, \enskip d(q'{}_i^{s_1}(\mathcal F), q'{}_i^{s_2}(\mathcal F))\leq 2.$$\end{lemma}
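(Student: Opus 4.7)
My approach is to first observe the key geometric fact that $q_i^s(\mathbf{z}) = ((1-s)g_i(x_{i+1},y_i)+s\,x_i,\,y_i)$ always has second coordinate $y_i(\mathbf{z})$, so the homeomorphism $h := q_i^{s_2}\circ(q_i^{s_1})^{-1}$ of $D_{a/b}$ preserves every horizontal chord setwise. Combined with the fact that $Q_i = P_i = Q'_i$ on every singularity of $\zeta$ (in particular on $\Sigma_a$), this shows that $h$ restricts to the identity on $\partial D_{a/b}$. The analogous statement for $q'{}_i^s$ follows because $P_i(\mathbf z)$ and $Q'_i(\mathbf z)$ share the common first coordinate $x_i(\mathbf z)$, so $q'{}_i^{s_2}\circ(q'{}_i^{s_1})^{-1}$ preserves every vertical chord setwise and fixes $\partial D_{a/b}$ pointwise.

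It therefore suffices to prove the following general claim: if $\mathcal F_0$ is a radial foliation of $D_{a/b}^*$ and $h$ is a homeomorphism of $D_{a/b}$ that fixes $\partial D_{a/b}$ pointwise and preserves every horizontal chord setwise, then $d(\mathcal F_0, h(\mathcal F_0))\le 2$. To prove this, I would pick the canonical lift $\tilde h$ of $h$ to $\tilde D_{a/b}$ that is the identity on $\pi^{-1}(\partial D_{a/b})$. This lift preserves setwise every connected component of the preimage of each horizontal chord; in particular $\tilde h^{-1}(\tilde z)$ lies on the same horizontal chord lift as $\tilde z$. Each such chord lift is an arc of $\tilde D_{a/b}$ contained in a single connected component of $\tilde D_{a/b}\setminus \pi^{-1}(\{y=0\}\cap D_{a/b})$, whose angular extent equals $1/2$; so each chord lift has angular extent strictly less than $1/2$ (half a turn in the universal cover).

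Using the equivariance $\hat\theta_{\tilde f(\tilde z),\tilde f(\tilde z')}(f(\mathcal F)) = \hat\theta_{\tilde z,\tilde z'}(\mathcal F)$ recorded in Section 2.2, one obtains, for compatible choices of lifts,
\[
\hat\tau(\tilde z,\tilde z',\mathcal F_0, h(\mathcal F_0))
= \hat\theta_{\tilde h^{-1}(\tilde z),\tilde h^{-1}(\tilde z')}(\mathcal F_0)
- \hat\theta_{\tilde z,\tilde z'}(\mathcal F_0).
\]
Connecting $\tilde h$ to the identity through a continuous family $\tilde h_s$ of lifts of horizontal-chord-preserving homeomorphisms fixing $\partial D_{a/b}$, the points $\tilde h_s^{-1}(\tilde z)$ and $\tilde h_s^{-1}(\tilde z')$ each move along their respective horizontal chord lifts, and so their angular coordinates each vary by less than $1/2$. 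The main obstacle will be to leverage this angular constraint to show that the continuous $\Z$-valued function $s\mapsto \hat\theta_{\tilde h_s^{-1}(\tilde z),\tilde h_s^{-1}(\tilde z')}(\mathcal F_0)$ differs between $s=0$ and $s=1$ by at most $2$. I expect this to follow from a direct case analysis over the four leaf-ordering configurations $\dot 0,\dot 1,\dot 2,\dot 3$: since each point wanders angularly by strictly less than half a turn, at most one effective ``order reversal'' of $(\tilde z,\tilde z')$ with respect to $\mathcal F_0$ can occur, and any further oscillations in the continuous lift must cancel in pairs, yielding $|\hat\tau|\le 2$. The argument for $q'{}_i^s$ is then obtained by interchanging the roles of horizontal and vertical throughout.
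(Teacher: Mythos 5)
Your opening reduction is sound: $q_i^s$ shares the second coordinate $y_i$ with $q_i$, so $u_i^s := q_i^s\circ q_i^{-1}$ preserves each horizontal chord of $D_{a/b}$, fixes $\Sigma_a$ (hence $\partial D_{a/b}$), and (symmetrically) $q'{}_i^s\circ (q'{}_i^{s_1})^{-1}$ preserves vertical chords. This is exactly the geometric starting point of the paper's argument.

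The gap is in the ``general claim'' you reduce to and in the strategy you sketch for proving it. Your claim asserts $d(\mathcal F_0, h(\mathcal F_0))\le 2$ for an \emph{arbitrary} radial foliation $\mathcal F_0$ and any horizontal-chord-preserving $h$ fixing the boundary. That statement is much stronger than the lemma, and it drops the one piece of structure the paper actually uses: the interval $J(z)$ (the maximal horizontal segment through $z$ missing the ``horizontal set'' $H_i$) is \emph{transverse to the whole family} $q_i^s(\mathcal F)$, $s\in[0,1]$, and crosses every leaf from right to left, because $\tfrac{d}{dt}\pi_2\circ q_i^s(\mathbf z^t)=\eta_i(\mathbf z)$ is independent of $s$. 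This transversality is what makes the three sets where $\theta_{\tilde z_0',\tilde z_1'}(q_i^s(\mathcal F))$ equals $\dot 1$, $\dot 3$, or lies in $\{\dot 0,\dot 2\}$ connected in $\tilde J(\tilde z_0)\times\tilde J(\tilde z_1)$, forcing $\theta$ to take at most three values, hence $\hat\theta$ to take values in a single connected component $\{4k-1,4k,4k+1\}$ of $\Z\setminus(4\Z+2)$ (or of $\Z\setminus 4\Z$), and hence $|\hat\tau|\le 2$. For an arbitrary radial foliation $\mathcal F_0$ a horizontal chord can cross the same leaf several times, so these sets need not be connected and the count need not be $\le 3$.

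Your substitute mechanism --- ``each chord lift has angular extent $<1/2$, so at most one order reversal can occur and further oscillations cancel in pairs'' --- does not hold up. First, ``angular extent'' refers to the Euclidean angle, while $\hat\theta$ counts transitions with respect to $\mathcal F_0$, and for a foliation whose leaves spiral a great deal a short Euclidean angular arc can cross the same $\mathcal F_0$-leaf many times, so nothing forces the oscillations of $\hat\theta$ to pair off. Second, even in the transparent case $\mathcal F_0=\mathcal F_*$ (Euclidean radial foliation), the angular constraint alone only bounds the relative angular displacement of $(\tilde z,\tilde z')$ by strictly less than one full turn, giving $|\hat\tau|\le 3$, not $\le 2$; the sharper bound $2$ really comes from the three-value/connectedness mechanism. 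You acknowledge this step as ``the main obstacle'' and leave it as an expectation, so the proposal is incomplete precisely at the point where the paper's argument does the work. To repair it you would need to restrict attention to the segments $J(z)$ that avoid $H_i$, record the transversality of $J(z)$ to the family $q_i^s(\mathcal F)$ coming from the $s$-independence of $\eta_i$, and run the connectedness argument in $\tilde J(\tilde z_0)\times\tilde J(\tilde z_1)$ rather than appeal to a bound on Euclidean angular extent.
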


 \begin{proof}We fix $i\in\Z$ and will prove the first inequality, the proof of the second one being similar. The leaves of $q_i^s(\mathcal F)$ are the paths $t\mapsto q_i^s({\bf z}^t)$, ${\bf z}\in \Delta_a^*$. Note that the map $t\mapsto \pi_2(q_i^s({\bf z}^t))$ is $C^1$ and that
$$ {d\over dt}  \pi_2\circ q^s_i({\bf z}^t)_{\vert t=0}=  \eta_i({\bf z})= x_i-g_i(x_{i+1}, y_i).$$ 
Say that $z\in D_{a/b}^*$ is {\it horizontal } if it can be written $z=q_i({\bf z})$, where $x_i-g_i(x_{i+1}, y_i)=0$. Denote $H_i$ the set of horizontal points\footnote{ if $\eta_i({\bf z})=0$, then $\xi_i({\bf z})\not=0$, so every leaf of $p_i(\mathcal F)$ is a $C^1$ embedded line and $H_i$ is nothing but the set of points where the foliation $p_i(\mathcal F)$ is horizontal.}. For every point $z\in D_{a/b}^*$, define $J(z)$ to be equal to $\{z\}$ if $z$ is horizontal and to the largest horizontal interval contained in $D_{a/b}^*\setminus H_i$ and containing $z$, if $z$ is not horizontal. The sign of $\eta_i(q_i^{-1}(z'))$ does not depend on the choice of $z'\in J(z)$. We orient $J(z)$ with $x$ increasing if this sign is positive and with $x$ decreasing if this sign is negative. Note that the path $s\mapsto u_i^s(z)= q_i^s\circ q_i^{-1}(z)$ is constant if $z$ is horizontal and is an oriented segment of $J(z)$ inheriting the same orientation if $z$ is not horizontal. The fact that $\displaystyle {d\over dt}  \pi_2\circ q^{s}_i({\bf z}^t)_{\vert t=0}=  \eta_i({\bf z})$, for every $s\in[0,1]$ and every ${\bf z}\in \Delta_a^*$, implies that if $z$ is not horizontal, the oriented interval $J(z)$ is transverse to the foliations $q_i^{s}(\mathcal F)$, $s\in [0,1]$, crossing locally every leaf from the right to the left.

Denote $\tilde D_{a/b}$ the universal covering space of $D_{a/b}^*$ and for every $\tilde z\in \tilde D_{a/b}$ that lifts  $z\in D_{a/b}^*$, denote $\tilde J(\tilde z)$ the lift of $J(z)$ containing $\tilde z$, with the induced orientation in case $z\not\in H_i$. Write $\tilde H_i$ for the lift of $H_i$, write $\widetilde{q_i^{s}(\mathcal F)}$ for the lift of $q_i^{s}(\mathcal F)$ and denote $(\tilde u_i^s)_{s\in[0,1]}$  the identity isotopy that lifts $(u_i^s)_{s\in[0,1]}$. Suppose that $\tilde z\in \tilde D_{a/b}$ is not in $\tilde H_i$. For every $s_0\in[0,1]$,  the oriented interval $\tilde I(\tilde z)$ is transverse to the foliation $\widetilde{q_i^{s_0}(\mathcal F)}$, crossing locally every leaf from the right to the left. Moreover if $0\leq s_1< s_2$, then $\tilde u_i^{s_2}(\tilde z)$ follows $\tilde u_i^{s_1}(\tilde z)$ on $\tilde J(\tilde z)$. Consequently it holds that:
$$ 0\leq s_0\leq 1 \enskip \mathrm {and} \enskip  0\leq s_1< s_2\leq 1 \Rightarrow \theta_{\tilde u_i^{s_1}(\tilde z), \tilde u_i^{s_2}(\tilde z)}(q_i^{s_0}(\mathcal F) )=\dot 1.$$

Now, let us fix two different points $\tilde z_0$ and $\tilde z_1$ in $\tilde D_{a/b}$ and $s\in[0,1]$.   Observe that the three sets 
$$\eqalign{\{(\tilde z'_0, \tilde z'_1) \in \tilde J(\tilde z_0) \times  \tilde J(\tilde z_1) \,&\vert\, \,\tilde z'_0\not =\tilde z'_1,\, \theta_{\tilde z'_0, \tilde z'_1}( q_i^{s}(\mathcal F))=\dot 1\},\cr
\{(\tilde z'_0, \tilde z'_1) \in \tilde J(\tilde z_0) \times  \tilde J(\tilde z_1) \,&\vert\, \,\tilde z'_0\not =\tilde z'_1,\, \theta_{\tilde z'_0, \tilde z'_1}( q_i^{s}(\mathcal F))=\dot 3\},\cr
\{(\tilde z'_0, \tilde z'_1) \in \tilde J(\tilde z_0) \times  \tilde J(\tilde z_1) \,&\vert\, \,\tilde z'_0\not =\tilde z'_1,\, \theta_{\tilde z'_0, \tilde z'_1}( q_i^{s}(\mathcal F))\in\{\dot0, \dot 2\}\}\cr}$$ are connected, whether the points belong to $\tilde H_i$ or not. Indeed, the paths $\tilde J(\tilde z_0)$ and $\tilde J(\tilde z_1)$ in $\tilde D_{a/b}$ draw intervals of leaves of the foliation $\widetilde{q_i^{s}(\mathcal F)}$ whose intersection is an interval of leaves if not empty. In particular the set of couples $(\tilde z'_0, \tilde z'_1) \in \tilde J(\tilde z_0) \times  \tilde J(\tilde z_1)$ such that $\tilde z'_0$ and  $\tilde z'_1$ are distinct and belong to the same leaf of $\widetilde {q_i^{s}(\mathcal F)}$ is connected: it is an interval (possibly empty) if $\tilde J(\tilde z_0)\cap \tilde J(\tilde z_1)=\emptyset$, it is empty if  $\tilde J(\tilde z_0)\cap \tilde J(\tilde z_1)\not=\emptyset$ (because the intersection is a ``horizontal'' path in $\tilde D_{a/b}$ ). The map $(\tilde z'_0, \tilde z'_1)\mapsto  \theta_{\tilde z'_0, \tilde z'_1}( q_i^{s}(\mathcal F))$ being continuous on $\tilde W$, takes at most three values on  $\tilde J(\tilde z_0) \times  \tilde J(\tilde z_1)$  (either $\dot 0$ or $\dot 2$ is missing).   

In particular, if $0\leq s_1< s_2\leq 1$, the map 
$$s\in[s_1,s_2]\mapsto \theta_{\tilde u_i^s\circ (\tilde u_i^{s_1})^{-1}(\tilde z_0), \,\tilde u_i^s\circ (\tilde u_i^{s_1})^{-1}(\tilde z_1)} (q_i^{s_2}(\mathcal F))=\theta_{\tilde z_0,\tilde z_1} (  u_i^{s_1}\circ (u_i^{s})^{-1}\circ q_i^{s_2}(\mathcal F))\in \Z/4\Z$$ takes at most three values and is lifted to a map 
$$s\in[s_1,s_2]\mapsto \hat \theta_{\tilde z_0,\tilde z_1} ( u_i^{s_1}\circ (u_i^{s})^{-1}\circ q_i^{s_2}(\mathcal F))\in \Z$$  taking at most three values, which implies that  $$\vert\hat \tau (\tilde z_0, \tilde z_1, q_i^{s_2}(\mathcal F), q_i^{s_1}(\mathcal F))\vert \leq 2.$$
\end{proof}

The homeomorphism $\tilde u_i^s$ sends the foliation  $\widetilde{q_i(\mathcal F)}$ onto the foliation  $\widetilde{q_i^{s}(\mathcal F)}$. The leaves of  $\widetilde{q_i(\mathcal F)}$ are pushed on the left by the isotopy $(\tilde u_i^s)_{s\in[0,1]}$.  More precisely, for every $0\leq s_1<s_2\leq 1$ and every leaf $\tilde \phi$ of $\widetilde{q_i(\mathcal F)}$, it holds that:
\begin{itemize}
\item $L( \tilde u_i^{s_2}(\tilde \phi))\subset L( \tilde u_i^{s_1}(\tilde \phi))$,
\item if $\tilde z\in\tilde\phi$ belongs to $\tilde H_i$, then $\tilde z \in\tilde u_i^{s_1}(\tilde \phi) \cap \tilde u_i^{s_2}(\tilde \phi)$,
\item if $\tilde z\in\tilde\phi$ does not belong to $\tilde H_i$, then $\tilde u_i^{s_2}(\tilde z)\in L(\tilde u_i^{s_1}(\tilde \phi))$.
\end{itemize} 

\bigskip Similarly,   for every ${\bf z} \in \Delta_{a}^*$, the map $t\mapsto \pi_1(q'{}_i^s({\bf z}^t))$ is $C^1$ and we have
$$ {d\over dt}  \pi_1\circ q'{}^s_i({\bf z}^t)_{\vert t=0}=  \xi_i({\bf z})= y_i-g_{i-1}(x_{i}, y_{i-1}).$$ 
Say that $z\in D_{a/b}^*$ is {\it vertical} if it can be written $z=q'_i({\bf z})$, where $y_i-g_{i-1}(x_{i}, y_{i-1})=0$. Denote $V_i$ the set of vertical points and $\tilde V_i$ its lift in $\tilde D_{a/b}$.
Define $u'{}_i^s= q'{}_i^s\circ p_i^{-1}$ and lift the isotopy $(u'{}_i^s)_{s\in[0,1]}$ to an identity isotopy $(\tilde u'{}_i^s)_{s\in[0,1]}$. Write  $\widetilde{q'{}_i^{s}(\mathcal F)}$ for the lift of $q'{}_i^{s}(\mathcal F)$.  Then, for every $0\leq s_1<s_2\leq 1$ and every every leaf $\tilde \phi$ of $\widetilde{q'{}_i(\mathcal F)}$, it holds that:

\begin{itemize}
\item $L( \tilde u'{}_i^{s_2}(\tilde \phi))\subset L( \tilde u'{}_i^{s_1}(\tilde \phi))$,
\item if $\tilde z\in\tilde\phi$ belongs to $\tilde V_i$, then $\tilde z \in\tilde u'{}_i^{s_1}(\tilde \phi) \cap \tilde u'{}_i^{s_2}(\tilde \phi)$,
\item if $\tilde z\in\tilde\phi$ does not belong to $\tilde V_i$, then $\tilde u'{}_i^{s'}(\tilde z)\in L(\tilde u'{}_i^{s}(\tilde \phi))$.
\end{itemize}

To conclude, consider the family $(v_i^s)_{s\in[0,2]}$ where
$$v_i(s)= \begin{cases} u_i^s &\text{ if $0\leq s\leq  1$,}\\  u'{}_i^{s-1} \circ u_i^1= q'{}_i^{s-1}\circ q_i^{-1} &\text{ if  $1\leq s\leq 2$,}\end{cases}$$ 
 is an isotopy from $\mathrm{Id}$ to $q'_i\circ q^{-1}_{i}$.   The isotopy $\left(v_i^s{}_{\vert  D_{a/b}^*}\right)_{s\in[0,2]}$ can be lifted to an identity isotopy $\left(\tilde v_i^s\right)_{s\in[0,2]}$ such that 
  $$\tilde v_i(s)= \begin{cases}\tilde u_i^s &\text{ if $0\leq s\leq  1$,}\\  \tilde u'{}_i^{s-1} \circ \tilde u_i^1 &\text{ if  $1\leq s\leq 2$.}\end{cases}$$
  The map $\tilde v_i^s$ sends the foliation  $\widetilde{q_i(\mathcal F)}$ onto the foliation $\widetilde{q^s_i(\mathcal F)}$ if $0\leq s\leq 1$ and onto the foliation $\widetilde {q'{}_i^{s-1}(\mathcal F)}$ if $1\leq s\leq 2$. Moreover: 
  \begin{itemize}
\item for every $0\leq s_1<1<s_2\leq 2$ and every leaf $\tilde \phi$ of $\widetilde{\mathcal F}_i^0$, it holds that $\overline{L( \tilde v_i^{s_2}(\tilde \phi))}\subset L( \tilde v_i^{s_1}(\tilde \phi)$.
\end{itemize} 
Indeed, the sets $H_i$ and $V_i$ are disjoint.

\subsection{Construction of a good isotopy}

One gets a $m$-periodic family of homeomorphisms $(\widehat f_i)_{i\in\Z}$ of $D_{a/b}$ by writing:
$$\widehat f_i=q_{i+1}\circ q_i^{-1}.$$
Its periodicity comes from the equalities
$$\widehat f_{i+m}=q_{i+m+1}\circ q_{i+m}^{-1}= q_{i+1}\circ \varphi_{\vert\Delta_a}\circ  (q_{i}\circ \varphi_{\vert\Delta_a})^{-1}$$

Moreover  $\widehat f =\widehat f_m\circ\dots\circ \widehat f_1$ has order $q$ because
$$\widehat f =q_{m+1}\circ q_{1}^{-1}=q_{1}\circ (\varphi_{\vert\Delta_a})\circ q_{1}^{-1}.$$
or equivalently because
$$\widehat f^b= \widehat f_{mb}\circ\dots\circ \widehat f_1=q_{mb+1}\circ q_{1}^{-1}=\mathrm{Id}_{D_{a/b}}.$$

Note also that $\widehat f_i$ fixes $0$ and coincides with $f_i$ on $S_{a/b}$ because
$$f_i\vert_{D_{a/b}}=q'_{i+1}\circ q_i^{-1}.$$

Let us define an isotopy $\check I=(\check f_s)_{s\in[0,mb]}$ from $\mathrm{Id}$ to $f^b$ in the following way:

\medskip

If  $s\in [2k,2k+1]$, $0\leq k<mb$, then 
$$\begin{aligned}\check f_s&= f_{mq}\circ \dots \circ f_{mb-k+1} \circ q_{mb-k+1}^{s-2k}\circ q_1^{-1}\\
&=  f_{mq}\circ \dots\circ f_{mb-k+1}  \circ q_{mb-k+1}^{s-2k}\circ q^{-1}_{mq-k}\circ \widehat f_{mb-k-1}\circ \dots\circ \widehat f_1.\end{aligned}$$

If  $s\in [2k+1,2k+2]$, $0\leq k<mq$, then 
$$\begin{aligned}\check f_s&= f_{mb}\circ \dots \circ f_{mb-k+1} \circ q'{}_{mb-k+1}^{s-2k-1}\circ q_1^{-1}\\
&=  f_{mq}\circ \dots\circ f_{mb-k+1}  \circ q'{}_{mb-k+1}^{s-2k-1}\circ q^{-1}_{mb-k}\circ \widehat f_{mb-k-1}\circ \dots\circ \widehat f_1.\end{aligned}$$

Note that
$$\begin{aligned}\check f_{2k}&= f_{mb}\circ \dots\circ  f_{mb-k+1}\circ \widehat f_{mb-k}\circ \dots\circ \widehat f_1,\\
 \check f_{2k+1}&= f_{mb}\circ \dots\circ  f_{mb-k+1}\circ p_{mb-k+1}\circ q_{mb-k}^{-1}\circ\widehat  f_{mb-k-1}\circ \dots\circ \widehat f_1.\end{aligned}
$$

In particular, we have 

$$\check f_s(q_1({\mathcal F}))= \begin{cases}  f_{mb}\circ \dots \circ f_{mb-k+1} (q_i^{s-2k}({\mathcal F})) &\text{ if $s\in [2k,2k+1]$,}\\
f_{mb}\circ \dots \circ f_{mb-k+1} (q'{}_i^{s-2k-1}({\mathcal F})) &\text{ if $s\in [2k+1,2k+2]$.}\end{cases}$$

\begin{proposition} \label{pr:distancefoliationsglobal} For every $s_1$, $s_2$ in $[0,2mb]$ we have
$$d\left(\check f_{s_1}(q_1({\mathcal F})), \check f_{s_2}(q_1({\mathcal F}))\right)\leq 2\vert s_2-s_1\vert +4.$$ Moreover, if $s_1$ and $s_2$ are integers we have
$$d\left(\check f_{s_1}(q_1({\mathcal F})), \check f_{s_2}(q_1({\mathcal F}))\right)\leq 2\vert s_2-s_1\vert.$$
\end{proposition}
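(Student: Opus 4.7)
The plan is to exploit the fact that on each unit sub-interval $[j,j+1]\subset[0,2mb]$, the isotopy $(\check f_s)$ has the special form \emph{fixed homeomorphism composed with the one-parameter family} $q_i^{\,\cdot}$ or $q'{}_i^{\,\cdot}$, so that Lemma~\ref{le:distancefoliations} together with the homeomorphism invariance of the winding distance (Lemma~\ref{le:distance}) will give a uniform bound of $2$ on each such piece. The triangle inequality then yields the global estimate.

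More precisely, first I would extract from the explicit formulas defining $\check f_s$ the following observation: for each integer $j\in\{0,\dots,2mb-1\}$ there exist a homeomorphism $F_j$ of $D_{a/b}^*$ (a composition of certain $f_{i}$'s and $\widehat f_i$'s, independent of $s\in[j,j+1]$) and an index $i_j$ such that
$$\check f_s(q_1(\mathcal F))=F_j\bigl(q_{i_j}^{s-j}(\mathcal F)\bigr)\quad\text{if }j=2k,\qquad \check f_s(q_1(\mathcal F))=F_j\bigl(q'{}_{i_j}^{\,s-j-1+1}(\mathcal F)\bigr)\quad\text{if }j=2k+1.$$
(Here one uses that each $f_i$ preserves $D_{a/b}$, because $a/b>\alpha$ and the $f_i$ act as rotations on every circle of radius $\geq 1$.) Applying Lemma~\ref{le:distance} and Lemma~\ref{le:distancefoliations} then gives, for any $s_1,s_2\in[j,j+1]$,
$$d\bigl(\check f_{s_1}(q_1(\mathcal F)),\check f_{s_2}(q_1(\mathcal F))\bigr)=d\bigl(q_{i_j}^{s_1-j}(\mathcal F),q_{i_j}^{s_2-j}(\mathcal F)\bigr)\leq 2,$$
and similarly for the $q'$-case.

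Next, I would handle the case of integer endpoints $s_1<s_2$ by telescoping through consecutive integers and using the previous bound on each unit interval:
$$d\bigl(\check f_{s_1}(q_1(\mathcal F)),\check f_{s_2}(q_1(\mathcal F))\bigr)\leq\sum_{j=s_1}^{s_2-1}d\bigl(\check f_{j}(q_1(\mathcal F)),\check f_{j+1}(q_1(\mathcal F))\bigr)\leq 2(s_2-s_1).$$
For general $s_1\leq s_2$, if both lie in the same unit interval the bound $2$ is already $\leq 2|s_2-s_1|+4$; otherwise I set $j_1=\lceil s_1\rceil$ and $j_2=\lfloor s_2\rfloor$, apply the unit-interval estimate twice at the ends and the integer estimate in the middle, to obtain
$$d\bigl(\check f_{s_1}(q_1(\mathcal F)),\check f_{s_2}(q_1(\mathcal F))\bigr)\leq 2+2(j_2-j_1)+2\leq 2|s_2-s_1|+4.$$

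I do not anticipate a major obstacle: the substantive work has already been done in Lemma~\ref{le:distancefoliations}, whose proof is a careful analysis of the horizontal/vertical transversality of the straight-line interpolation $q_i^s$. The only minor point requiring attention is the bookkeeping on the piecewise definition of $\check f_s$ at the transition times $s=2k$ and $s=2k+1$, to verify that the two expressions agree there and that the invariant homeomorphism $F_j$ is well defined on $D_{a/b}^*$; this is automatic from the identity $\widehat f_i=q_{i+1}\circ q_i^{-1}$ and the invariance of $D_{a/b}$ under each $f_i$.
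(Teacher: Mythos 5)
Your proposal is correct and follows essentially the same route as the paper: on each unit sub-interval you factor $\check f_s\circ q_1$ as a fixed homeomorphism composed with $q_{i}^{\,\cdot}$ or $q'{}_{i}^{\,\cdot}$, invoke the homeomorphism-invariance of $d$ and Lemma~\ref{le:distancefoliations} to get the bound $2$ per unit interval, then telescope. The paper states the same unit-interval estimates and leaves the final triangle-inequality bookkeeping implicit ("we easily deduce"), which you have simply spelled out.
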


\begin{proof}

We will use Lemma \ref{le:distancefoliations}. Note that for every $s_1$, $s_2$ in $[2k,2k+1]$ we have
$$\begin{aligned} d\left(\check f_{s_1}(q_1({\mathcal F})), \check f_{s_2}(q_1({\mathcal F})\right)&= d \left( f_{mb}\circ \dots \circ f_{mq-k+1} (q_i^{s_1-2k}({\mathcal F})),  f_{mb}\circ \dots \circ f_{mb-k+1} (q_i^{s_2-2k}({\mathcal F}))\right)\\
&= d \left(q_i^{s_1-2k}({\mathcal F}),  q_i^{s_2-2k}({\mathcal F})\right)\\
&\leq 2, \end{aligned}$$
and similarly
for every $s_1$, $s_2$ in $[2k+1,2k+2]$ we have
$$\begin{aligned}d\left(\check f_{s_1}(q_1({\mathcal F})), \check f_{s_2}(q_1({\mathcal F})\right)&= d \left( f_{mb}\circ \dots \circ f_{mb-k+1} (q'{}_i^{s_1-2k-1}({\mathcal F})),  f_{mb}\circ \dots \circ f_{mb-k+1} (q'{}_i^{s_2-2k-1}({\mathcal F})\right)\\
&= d \left(q'{}_i^{s_1-2k-1}({\mathcal F}),  q'{}_i^{s_2-2k-1}({\mathcal F}))\right)\\
&\leq 2. \end{aligned}$$
We easily deduce Proposition  \ref{pr:distancefoliationsglobal}. 
\end{proof}

We immediately deduce

\begin{corollary} \label{co:distancefoliationstotal}It holds that
$$d\left(f^b(q_1({\mathcal F})),q_1({\mathcal F})\right)\leq 4mb.$$
\end{corollary}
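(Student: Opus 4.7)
The plan is to simply specialize Proposition \ref{pr:distancefoliationsglobal} at the endpoints $s_1=0$ and $s_2=2mb$. By construction of the isotopy $\check I=(\check f_s)_{s\in[0,2mb]}$ one has $\check f_0=\mathrm{Id}$ and $\check f_{2mb}=f^b$, so the left-hand side $d(\check f_{s_2}(q_1(\mathcal F)), \check f_{s_1}(q_1(\mathcal F)))$ becomes exactly $d(f^b(q_1(\mathcal F)), q_1(\mathcal F))$.

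Since $s_1=0$ and $s_2=2mb$ are integers, I would apply the second (sharper) inequality of Proposition \ref{pr:distancefoliationsglobal} rather than the first. That inequality gives
\[ d\bigl(f^b(q_1(\mathcal F)), q_1(\mathcal F)\bigr) \leq 2\,\lvert 2mb-0\rvert = 4mb, \]
which is precisely the claim. There is no obstacle; the corollary is just the integer-endpoint specialization of the previous proposition.

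If one did not want to invoke the integer refinement, one could alternatively apply the general bound $2|s_2-s_1|+4$, but that would yield $4mb+4$, which is weaker than $4mb$. So the only real point is to notice that $0$ and $2mb$ are integers and therefore the improved bound applies, eliminating the additive constant $4$.
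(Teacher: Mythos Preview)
Your proposal is correct and is exactly the paper's intended argument: the corollary is stated as an immediate deduction from Proposition~\ref{pr:distancefoliationsglobal}, and the only content is precisely the specialization $s_1=0$, $s_2=2mb$ together with the observation that both are integers so that the sharper bound $2|s_2-s_1|$ applies.
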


Let us conclude this section by the following result

\begin{proposition} \label{pr:Brouwer} If $\tilde{\check I}=(\tilde{\check f}_s)_{s\in[0,2mb]}$ is the identity isotopy that lifts $\left(\check f_s{}_{\vert D_{a/b}^*}\right)_{s\in[0,2mb]}$, then every leaf $\tilde\phi$ of $\widetilde {q_1({\mathcal F})}$ is a Brouwer line of $\tilde{\check f}_{2mb}$.

\end{proposition}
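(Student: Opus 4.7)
My plan is to show $\tilde{\check f}_{2mb}(\tilde\phi)\subset L(\tilde\phi)$ by tracking how the entire lifted foliation $\tilde F_s:=\tilde{\check f}_s\bigl(\widetilde{q_1(\mathcal F)}\bigr)$ evolves in blocks of length $2$, and exploiting the strict leftward monotonicity of the local isotopies $v_j^s$ proved in Subsection 3.4.

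First I would rewrite the isotopy in a convenient form. Using $q_j^t=u_j^t\circ q_j$ and $q'_j{}^t=u'_j{}^t\circ p_j=v_j^{t+1}\circ q_j$, the explicit formula for $\check f_s$ gives, for every $s\in[2k,2k+2]$ and $j=mb-k+1$,
$$\check f_s(q_1(\mathcal F))=\bigl(f_{mb}\circ\cdots\circ f_j\bigr)\bigl(v_j^{s-2k}(q_j(\mathcal F))\bigr).$$
In particular, setting $G_k=f_{mb}\circ\cdots\circ f_j$, one gets $F_{2k}=G_k(q_j(\mathcal F))$, and the identity $f_{j-1}\circ q_{j-1}=q'_j=v_j^2(q_j)$ matches $F_{2k+2}$ with the next block's starting foliation $G_{k+1}(q_{j-1}(\mathcal F))$, so the description is consistent across blocks. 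Lifting to $\tilde D_{a/b}$, the isotopy $(\tilde{\check f}_s)$ uniquely determines lifts $\tilde G_k$ of each composition and lifts $\tilde v_j^t$ of each $v_j^t$, and on each block $[2k,2k+2]$ we have $\tilde F_s=\tilde G_k\bigl(\tilde v_j^{s-2k}(\widetilde{q_j(\mathcal F)})\bigr)$; in particular the leaf $\tilde\phi_{2k}:=\tilde{\check f}_{2k}(\tilde\phi)$ is of the form $\tilde G_k(\tilde\psi_k)$ for a unique leaf $\tilde\psi_k$ of $\widetilde{q_j(\mathcal F)}$, and $\tilde\phi_{2k+2}=\tilde G_k\bigl(\tilde v_j^2(\tilde\psi_k)\bigr)$.

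Now I would invoke the strict monotonicity established at the end of Subsection 3.4: for $0\leq s_1<1<s_2\leq 2$ and any leaf $\tilde\psi$ of $\widetilde{q_j(\mathcal F)}$, $\overline{L(\tilde v_j^{s_2}(\tilde\psi))}\subset L(\tilde v_j^{s_1}(\tilde\psi))$. Applied with $s_1=0$, $s_2=2$, this gives $\overline{L(\tilde v_j^2(\tilde\psi_k))}\subset L(\tilde\psi_k)$. Since $\tilde G_k$ is an orientation-preserving homeomorphism of $\tilde D_{a/b}$ and sends the oriented leaves of $\widetilde{q_j(\mathcal F)}$ to oriented leaves of $\tilde F_{2k}$ respecting the oriented sides, this inclusion is preserved, yielding $\overline{L(\tilde\phi_{2k+2})}\subset L(\tilde\phi_{2k})$. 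Iterating over $k=0,\dots,mb-1$, one concludes $\tilde\phi_{2mb}=\tilde{\check f}_{2mb}(\tilde\phi)\subset L(\tilde\phi)$, so $\tilde\phi$ is a Brouwer line of $\tilde{\check f}_{2mb}$.

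The main obstacle I anticipate is purely bookkeeping: one must check that the lifts $\tilde G_k$ produced by restricting $\tilde{\check I}$ to $[0,2k]$ are indeed the lifts of $f_{mb}\circ\cdots\circ f_j$ that make the block formula $\tilde F_s=\tilde G_k\bigl(\tilde v_j^{s-2k}(\widetilde{q_j(\mathcal F)})\bigr)$ hold, and that the transition at $s=2k+2$ (which uses $q'_j=f_{j-1}\circ q_{j-1}$) is the correct lifted equality. Once the isotopy is shown to match the natural lift of each piece, the argument reduces to transferring the leftward monotonicity of $\tilde v_j^s$ through the orientation-preserving conjugation $\tilde G_k$, which is immediate.
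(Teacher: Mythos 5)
Your proposal is correct and takes essentially the same approach as the paper: both rewrite $\check f_s(q_1(\mathcal F))$ on each block $[2k,2k+2]$ as $f_{mb}\circ\cdots\circ f_{mb-k+1}\circ v_{mb-k+1}^{s-2k}(q_{mb-k+1}(\mathcal F))$, invoke the strict leftward monotonicity $\overline{L(\tilde v_i^{s_2}(\tilde\phi))}\subset L(\tilde v_i^{s_1}(\tilde\phi))$ for $0\leq s_1<1<s_2\leq 2$ established at the end of Subsection 3.4, transport it through the orientation-preserving prefix, and concatenate over the $mb$ blocks. Your write-up only makes the lifting bookkeeping (the choice of the lifts $\tilde G_k$ and the matching of blocks at $s=2k+2$) more explicit than the paper, which leaves that verification implicit.
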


\begin{proof} Let $\phi $ be a leaf of $\mathcal F$, then for every $s\in[0,2mq]$, we have
  
  $$\check f_s(q_1(\phi))= \begin{cases} f_{mq}\circ \dots \circ f_{mb-k+1} \circ q_{mb-k+1}^{s-2k}(\phi) & \text {if $s\in [2k,2k+1]$, $0\leq k<mb$}\\
   f_{mq}\circ \dots\circ f_{mb-k+1}  \circ q'{}_{mb-k+1}^{s-2k}(\phi)& \text {if $s\in [2k+1,2k+2]$, $0\leq k<mq$}\end{cases}$$
Equivalently, it holds that
   $$\check f_s(q_1(\phi)) = f_{mq}\circ \dots \circ f_{mb-k+1} \circ v_{mb-k+1}^{s-2k}(q_{mb-k+1}(\phi))$$
   if  $s\in [2k,2k+2]$, $0\leq k<mb$. 
   
   We deduce that for every leaf $\tilde \phi_1$ of $\widetilde {q_1({\mathcal F})}$, it holds that $L( \tilde{\check f}_{s_2}(\tilde \phi_1))\subset L( \tilde {\check f}_{s_1}(\tilde \phi_1))$ if $0\leq s_1<s_2\leq 2mb$ and that $\overline {L( \tilde{\check f}_{2k+2}(\tilde \phi_1))}\subset L(\tilde {\check f}_{2k}(\tilde \phi_1))$ if $0\leq k<mb$. In particular we have $\overline {L( \tilde{\check f}_{2mb}(\tilde \phi))}\subset L(\tilde \phi_1)$

\end{proof}

\section{Proof of Theorem \ref{th:principal}}

 \begin{proof}[Proof of Theorem \ref{th:principal}]

Let $f\in\mathrm{Diff}^1_{\omega}(\D)$ be an irrational  pseudo-rotation that coincides with a rotation on $\S$. We want to prove that $\mathrm{Cal}(f)=0$. We extended our map to the whole plane and decompose it in twist maps  as explained  is Section \ref{se:goodfoliation}. In particular we get a natural identity isotopy $I$ such that $\mathrm{rot}(I_{\vert \S})=\alpha$, where $\alpha\in\R\setminus \Q$ and we want to prove that  $\widetilde{\mathrm{Cal}}(I)=\pi^2\alpha$. We use the results of Section \ref{se:goodfoliation}, keeping the same notations. If $(a,b)\in\Z\times\N\setminus\{0\}$, satisfies $a\in(b\alpha,b\beta)$, then $f_{\vert D_{a/b}}$ is a piecewise diffeomorphism of class $C^1$. It is easy to see that $\widetilde{\mathrm{Cal}}(I_{\vert D_{a/b}})$ is well-defined, if one refers to the third definition given in the introduction. It can be explicitly computed, we have
$$\begin{aligned}\widetilde{\mathrm{Cal}}(I_{\vert D_{a/b}})&= \widetilde{\mathrm{Cal}} (I) + 2\int_{1}^{1+a/b-\alpha} (\pi r^2) (\alpha+r-1) 2\pi r\, dr\\
&= \widetilde{\mathrm{Cal}} (I) + 4\pi^2\int_{1}^{1+a/b-\alpha} r^3(\alpha+r-1) dr.\end{aligned}$$
Of course it holds that $$\lim_{{a/b}\to \alpha} \widetilde{\mathrm{Cal}}(I_{\vert D_{a/b}})=\widetilde{\mathrm{Cal}}(I_{\vert \D}).$$

We have constructed a radial foliation $\mathcal F$ on $\Delta_a^*\subset E_b$. We define ${\mathcal F}_1=q_1(\mathcal F)$ and denote $\tilde{\mathcal F}_1$ the lift of  ${\mathcal F}_1$ to $\tilde D_{a/b}$. We define $W_{a/b}=\{(z,z')\in D_{a/b}^*\,\vert\, z\not=z'\}$. Note that the area of $D_{a/b}$ is
$$A(a,b)= \pi(1+a/b-\alpha)^2$$and recall that the difference of the value of ${\bf h}$ between the points of $\Sigma_a$ and $\{0\}$ is
  $$C(a,b)=\pi(a-b \alpha) \left( 1 + (a/b -\alpha) +{(a/b-\alpha)^2\over 3}\right).$$

\begin{lemma}\label{le:firstbound}The following bound holds
$$\mu_{\omega}\times\mu_{\omega} \left(\{(z,z')\in W_{a/b}\,\vert\, \overline{\hat\tau}(z, z', {\mathcal F}_1, f^{-b}({\mathcal F}_1))\not=0 \} \right)\leq 2\,A(a,b)\,C(a,b).$$\end{lemma}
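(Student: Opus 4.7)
My plan begins with the observation that $\overline{\hat\tau}(z,z',\mathcal F_1,f^{-b}(\mathcal F_1))$ is by definition a sum of absolute values of integers, hence itself a nonnegative integer; so the indicator of $\{\overline{\hat\tau}\neq 0\}$ is dominated by $\overline{\hat\tau}$, and it will suffice to prove the stronger estimate
$$\int_{W_{a/b}}\overline{\hat\tau}(z,z',\mathcal F_1,f^{-b}(\mathcal F_1))\,d\mu_\omega\times d\mu_\omega \;\leq\; 2\,A(a,b)\,C(a,b).$$

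Next I would unfold the sum defining $\overline{\hat\tau}$. For each fixed $z\in D_{a/b}^*$ with chosen lift $\tilde z$, a Fubini plus fundamental-domain argument for the covering $\tilde D_{a/b}\to D_{a/b}^*$ yields
$$\int_{D_{a/b}^*}\overline{\hat\tau}(z,z',\mathcal F_1,f^{-b}(\mathcal F_1))\,d\mu_\omega(z')\;=\;\int_{\tilde D_{a/b}}|\hat\tau(\tilde z,\tilde w,\mathcal F_1,f^{-b}(\mathcal F_1))|\,d\tilde\mu_\omega(\tilde w),$$
where $\tilde\mu_\omega$ is the lift of $\mu_\omega$. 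It therefore suffices to show that this inner integral is at most $2\,C(a,b)$ for every $\tilde z$; the outer integration over $z$ (of total $\mu_\omega$-mass $A(a,b)$) then supplies the factor $A(a,b)$.

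For the inner estimate I would invoke the isotopy $\tilde{\check I}=(\tilde{\check f}_s)_{s\in[0,2mb]}$ of Section \ref{se:goodfoliation} and set $\mathcal F_s:=\check f_s^{-1}(\mathcal F_1)$, so that $\mathcal F_0=\mathcal F_1$ and $\mathcal F_{2mb}=f^{-b}(\mathcal F_1)$. Then $\hat\tau(\tilde z,\tilde w,\mathcal F_1,f^{-b}(\mathcal F_1))$ is the total variation of the continuous lift $s\mapsto\hat\theta_{\tilde z,\tilde w}(\mathcal F_s)\in\mathbb Z$, which jumps by $\pm 1$ precisely when the moving leaf $L_s(\tilde z)$ of $\widetilde{\mathcal F_s}$ through $\tilde z$ sweeps past $\tilde w$. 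Proposition \ref{pr:Brouwer} and the monotonicity established in the proof of Lemma \ref{le:distancefoliations} ensure that this sweeping is monotone in $s$, so $|\hat\tau(\tilde z,\tilde w)|$ equals the number of crossings, and Fubini identifies
$$\int_{\tilde D_{a/b}}|\hat\tau(\tilde z,\tilde w,\mathcal F_1,f^{-b}(\mathcal F_1))|\,d\tilde\mu_\omega(\tilde w)\;=\;\tilde\mu_\omega\Big(\bigcup_{s\in[0,2mb]}L_s(\tilde z)\Big).$$

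The main obstacle will be to bound this swept area by $2\,C(a,b)$. The region $\bigcup_sL_s(\tilde z)$ pivots around $\tilde z$ and decomposes naturally into two lobes, one toward $\Sigma_{a/b}$ and one toward the origin. I plan to bound each lobe by $C(a,b)$ by comparing it with a ribbon between a leaf $\tilde\phi$ of $\widetilde{\mathcal F_1}$ and its image $\tilde f^{-b}(\tilde\phi)$: the area of such a ribbon equals the common energy of the nontrivial gradient orbits of $\zeta$ on $\Delta_a^*$, which is $C(a,b)$ by Lemma \ref{le:computationaction}. The comparison exploits the area-preservation of $\check f_s$ together with the fact that the extreme leaves $L_0(\tilde z)=\tilde\phi_{\tilde z}$ and $L_{2mb}(\tilde z)=\tilde f^{-b}(\tilde\phi_{\tilde f^b(\tilde z)})$ both pass through $\tilde z$; adding the two lobe bounds supplies the factor $2$ and completes the proof.
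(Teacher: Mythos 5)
Your proposal tries to prove something strictly stronger than the stated lemma, namely
$$\int_{W_{a/b}}\overline{\hat\tau}(z,z',\mathcal F_1,f^{-b}(\mathcal F_1))\,d\mu_\omega\times d\mu_\omega \leq 2\,A(a,b)\,C(a,b),$$
and the stronger claim is the one that fails. Compare with Lemma~\ref{le:secondbound} of the paper, which establishes precisely this integral but only with the constant $8mb\,A(a,b)\,C(a,b)$, using in an essential way Corollary~\ref{co:distancefoliationstotal} (the winding-distance bound $|\hat\tau(\tilde z,\tilde z',\mathcal F_1,f^{-b}(\mathcal F_1))|\leq 4mb$) to control the size of each nonzero term. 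If your estimate with constant $2\,A\,C$ were correct, Lemma~\ref{le:secondbound} and the whole machinery of Proposition~\ref{pr:distancefoliationsglobal} and Corollary~\ref{co:distancefoliationstotal} would be superfluous; the factor $mb$ is genuinely present.

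The gap sits in the monotonicity step. What Proposition~\ref{pr:Brouwer} gives is that the isotopy pushes a \emph{fixed} leaf $\tilde\phi$ of $\widetilde{\mathcal F_1}$ monotonically to the left, i.e.\ $L(\tilde{\check f}_{s_2}(\tilde\phi))\subset L(\tilde{\check f}_{s_1}(\tilde\phi))$ for $s_1<s_2$. This does \emph{not} imply that the map $s\mapsto \hat\theta_{\tilde z,\tilde w}(\mathcal F_s)=\hat\theta_{\tilde{\check f}_s(\tilde z),\tilde{\check f}_s(\tilde w)}(\mathcal F_1)$ is monotone: here \emph{both} points are transported by the isotopy, and their relative position with respect to the fixed foliation can oscillate (one point can ``overtake'' the other several times). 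The proof of Lemma~\ref{le:distancefoliations} only shows that within each elementary piece $[2k,2k+1]$ or $[2k+1,2k+2]$ the angle takes at most three values, not that the direction of variation is the same across the $2mb$ pieces; the resulting global control is exactly the $4mb$ of Corollary~\ref{co:distancefoliationstotal}, not a bound of the form $|\hat\tau|\in\{0,1,2\}$. Moreover, even if the motion were monotone, a single passage of the moving leaf through $\tilde w$ changes $\hat\theta$ by $\pm 2$ (one jump $\dot 1\to\dot 0$ and one $\dot 0\to\dot 3$, say), not by $\pm 1$, and a non-straight leaf in the strip $\tilde D_{a/b}$ can cross a fixed $\tilde w$ several times, so the integral of $|\hat\tau|$ over $\tilde w$ overcounts the swept area by the multiplicity; your identity
$\int_{\tilde D_{a/b}}|\hat\tau(\tilde z,\tilde w,\cdot,\cdot)|\,d\tilde\mu_\omega(\tilde w)=\tilde\mu_\omega\bigl(\bigcup_s L_s(\tilde z)\bigr)$
should at best be ``$\geq$''.

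The paper's own argument is deliberately coarser and avoids all of this. It does not try to control $\overline{\hat\tau}$ itself, only the set where it is nonzero. If $\overline{\hat\tau}(z,z')\not=0$, then for some $s$ and some $k$ the lifts $\tilde{\check f}_s(\tilde z)$ and $\tilde{\check f}_s(T^k\tilde z')$ lie on the same leaf of $\widetilde{\mathcal F_1}$; equivalently the $\check I$-trajectories of $z$ and $z'$ meet a common leaf, which forces $z'\in O(z)$ or $z\in O(z')$, where $O(z)$ is the projection to $D_{a/b}$ of the region $\overline{L(\tilde{\check f}_{2mb}^{-1}(\tilde\phi_{\tilde z}))\cap R(\tilde\phi_{\tilde z})}$ of lifted measure $C(a,b)$. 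Integrating $\mu_\omega(O(z))\leq C(a,b)$ over the two variables gives the $2\,A(a,b)\,C(a,b)$ directly. If you want to salvage the idea of integrating $\overline{\hat\tau}$ rather than its indicator, you need a pointwise bound on $|\hat\tau|$ — and that is precisely the $4mb$ of Corollary~\ref{co:distancefoliationstotal}, which then yields the Lemma~\ref{le:secondbound} constant, not the one claimed here.
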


\begin{proof}
Fix $z\in  D_{a/b}^*$ and choose a lift $\tilde z\in \tilde D_{a,b}$. The set $$O(\tilde z) = \overline {L({\widetilde {\check f}_{2mq}{}^{-1}}( \tilde\phi_{\tilde z}))\cap R(\tilde \phi_{\tilde z}})$$ has measure $C(a,b)$ (for the lifted measure $\tilde\mu_{\omega}$) and projects onto a subset $O(z)\subset D_{a/b}$ satisfying $\mu_{\omega}(O(z))\leq C(a,b)$. Moreover, we have
$$\tilde z' \in \tilde O(\tilde z) \Longleftrightarrow  \tilde{\check  I}(\tilde z') \cap \tilde\phi_{\tilde z}\not=\emptyset, \enskip z' \in O(z) \Longleftrightarrow  \check I(z') \cap\phi_{z}\not=\emptyset. $$  
Note that 
$ \tilde {\check I}(\tilde z)$ and  $\tilde {\check I}(\tilde z')$ meet a common  leaf if and only if $\tilde z' \in \tilde O(\tilde z)$ or $\tilde z \in \tilde O(\tilde z')$
and that $  {\check I}(z)$ and ${\check I}(z')$ meet a common  leaf if and only if $z' \in O(z)$ or $z \in O(z').$
 Obviously, $I(z)$ and $I(z')$ meet a common leaf if $\overline{\hat\tau}(z, z', {\mathcal F}_1, f^{-b}({\mathcal F}_1))\not=0$ and so, it holds that
 $$\begin{aligned}{}& \enskip\enskip\enskip\mu_{\omega}\times\mu_{\omega}\left(\{(z,z')\in W_{a/b}\,\vert\, \hat\tau(z, z', {\mathcal F}_1, f^{-b}({\mathcal F}_1))\not=0\} \right)\\&\leq \int_{D_{a/b}^*} \mu_{\omega}( O(z)) \, d\mu_{\omega}(z)+ \int_{ D_{a/b}^*} \mu_{\omega}( O(z')) \, d\mu_{\omega}(z')\\
  &\leq 2A(a,b)\,C(a,b).\end{aligned}$$
 
\end{proof}

We can be more precise:

\begin{lemma}\label{le:secondbound}The following bound holds
$$\int_{W_{a/b}}\overline{ \hat\tau}(z, z', {\mathcal F}_1, f^{-b}({\mathcal F}_1)) \,d\mu_{\omega}(z) \,d\mu_{\omega} (z') \leq 8mb\,A(a,b)\,C(a,b).$$\end{lemma}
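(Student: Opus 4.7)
The plan is to refine the proof of Lemma~\ref{le:firstbound} by using the sharp pointwise bound on $\vert\hat\tau\vert$ provided by Corollary~\ref{co:distancefoliationstotal}, rather than merely treating $\overline{\hat\tau}$ as a $\{0,1\}$-valued indicator. First, by the $\tilde f^b$-invariance of $\hat\tau$ combined with Corollary~\ref{co:distancefoliationstotal}, every term in the series defining $\overline{\hat\tau}$ satisfies
$$\vert\hat\tau(\tilde z, T^k(\tilde z'), \mathcal F_1, f^{-b}(\mathcal F_1))\vert \;\leq\; 4mb$$
for every $(\tilde z,\tilde z')\in\tilde W$ and every $k\in\Z$.

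Next, I will use the geometric description of the support of $\hat\tau$ from the proof of Lemma~\ref{le:firstbound}, but lift-by-lift. If $\hat\tau(\tilde z, T^k(\tilde z'), \mathcal F_1, f^{-b}(\mathcal F_1))\neq 0$, then the angle $\hat\theta_{\tilde z,T^k(\tilde z')}$ evolves nontrivially along the isotopy $\tilde{\check I}$, which forces the two lifted trajectories $\tilde{\check I}(\tilde z)$ and $\tilde{\check I}(T^k(\tilde z'))=T^k\,\tilde{\check I}(\tilde z')$ to meet a common leaf of $\widetilde{\mathcal F_1}$; as in Lemma~\ref{le:firstbound}, this is equivalent to $T^k(\tilde z')\in\tilde O(\tilde z)$ or $\tilde z\in\tilde O(T^k(\tilde z'))$. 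Since $\tilde{\check f}_{2mb}=\tilde f^b$ commutes with $T$, one has $\tilde O(T^k(\tilde z'))=T^k(\tilde O(\tilde z'))$, so multiplying the resulting indicator bound by $4mb$ yields the pointwise estimate
$$\overline{\hat\tau}(z, z', \mathcal F_1, f^{-b}(\mathcal F_1)) \;\leq\; 4mb\,\bigl(N_1(z,z')+N_2(z,z')\bigr),$$
where $N_1(z,z')=\#\{k\in\Z: T^k(\tilde z')\in\tilde O(\tilde z)\}$ and $N_2(z,z')=\#\{k\in\Z: T^{-k}(\tilde z)\in\tilde O(\tilde z')\}$ are well-defined independently of the choice of lifts.

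Finally, I will compute $\int\!\!\int N_1\,d\mu_\omega\,d\mu_\omega$ and $\int\!\!\int N_2\,d\mu_\omega\,d\mu_\omega$ via the covering formula. For fixed $z$ with lift $\tilde z$, the inner integral $\int_{D^*_{a/b}}N_1(z,z')\,d\mu_\omega(z')$ counts precisely the total lifted mass of $\tilde O(\tilde z)$, so it equals $\tilde\mu_\omega(\tilde O(\tilde z))=C(a,b)$; a second integration over $z\in D_{a/b}$ contributes the factor $A(a,b)$, and the computation for $N_2$ is identical. Summing and multiplying by $4mb$ produces the announced bound $8mb\,A(a,b)\,C(a,b)$. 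The one step demanding genuine care is the lift-by-lift implication in the second paragraph: one must verify that $\hat\tau\ne 0$ \emph{on a prescribed lift} $T^k(\tilde z')$ forces a common-leaf crossing for the corresponding lifted trajectories, and not merely for some other lift. This refines the argument used for Lemma~\ref{le:firstbound} and rests on the Brouwer-line property of Proposition~\ref{pr:Brouwer}, which makes the angle change \emph{monotone} in $s$ along the isotopy $\tilde{\check I}$ and thereby ensures that each unit of change in $\hat\theta$ is witnessed by an actual leaf crossing in $\tilde D_{a/b}$.
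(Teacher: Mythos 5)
Your proof is correct and follows essentially the same approach as the paper's: the paper introduces the counting function $\gamma_z(z')$ (the number of lifts of $z'$ in $\tilde O(\tilde z)$), which is precisely your $N_1(z,z')$, and bounds $\nu(z,z')\le \gamma_z(z')+\gamma_{z'}(z)$ before multiplying by the uniform bound $4mb$ from Corollary~\ref{co:distancefoliationstotal} and integrating. Your version makes the $T$-equivariance $\tilde O(T^k\tilde z')=T^k\tilde O(\tilde z')$ and the lift-by-lift leaf-crossing implication explicit, but the underlying argument is identical.
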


\begin{proof}We keep the same notations as in Lemma \ref{le:firstbound}. For every $(z,z')\in W_{a/b}$ define
 $$\nu(z,z')=\#\{k\in\Z\, \vert \, \overline{\hat\tau}(\tilde z, T^{-k}(\tilde z'), {\mathcal F}_1, f^{-b}({\mathcal F}_1))\not=0\}$$
where $\tilde z, \tilde z'$ are given lifts of $z,z'$.  Now, for every $z\in D_{a/b}^*$,  define a function $\gamma_z: D_{a/b}^*\to \N$ assigning to every point $z'\in D_{a/b}^*$ the number of its lifts that are in $O(\tilde z)$, where $\tilde z$ is a given lift of $z$. Suppose that $\gamma_z(z')\not=0$, or equivalently that $z'\in O(z)$. Then, for every $l\in\{1,\dots, \gamma_z(z')\}$ there exists a unique lift $\tilde z'\in \tilde O( \tilde z)$ of $z'$ such that $\tilde I(\tilde z')$ meets the leaves $T^r(\tilde \phi_{\tilde z})$, $0\leq r<l$. Note also that
$$\int_{ D_{a/b}^*} \gamma_z(z')\, d\mu_{\omega}(z')= \tilde\mu_{\omega}( \tilde O( \tilde z)) = C(a,b)$$
and that $$\nu(z,z')\leq \gamma_{z}(z')+\gamma_{z'}(z).$$
To conclude, we refer to Corollary \ref{co:distancefoliationstotal} that says that  for every $(\tilde z, \tilde z')\in W_{a/b}$, it holds that 
$\vert\hat\tau( \tilde z, \tilde z', {\mathcal F}_1, f^{-b}({\mathcal F}_1))\vert\leq 4mb$. \end{proof}

 Note also that for every couple $(a,b)$ defined as above, it holds that $\tilde{\check f}_{2mb}=(\tilde f_{\vert \tilde D_{a/b}})^b-T^a$ (or equivalently that $[\check I]=[I]_{\vert D_{a/b}}^b[T_{-a}]_{\vert D_{a/b}}$). Indeed both maps lift $f^b$ and coincide on $S_{a/b}$. 
If $\phi$ is a leaf of ${\mathcal F}_1$, then  $m_{\tilde{\check f}_{2mb}, \phi}$ is non negative and 
$$\int_{D^*_{a/b}} m_{\tilde{\check f}_{2mb}, \phi}\, d\mu_{\omega}=C(a,b).$$
We deduce that $\mu_{\omega}$ has a rotation vector (which was obviously known) and that  
$$\mathrm{rot}_{\tilde{\check f}_{2mb}}(\mu_{\omega})= \int_{D^*_{a/b}} m_{\tilde{\check f}_{2mb}, \phi}\, d\mu_{\omega}=C(a,b).$$
We deduce from Lemma \ref{le:secondbound} that $ \lambda_{f^b,{\mathcal F}_1}$ is integrable on $W_{a/b}$ for the measure $\mu_{\omega}\times\mu_{\omega}$  and that 
$$\left\vert \int_{W_{a,b}} \lambda_{f^b,{\mathcal F}_1} \,d\mu_{\omega}(z) \,d\mu_{\omega}(z')\right\vert \leq 8mb\,A(a,b)\,C(a,b).$$
According to the results of Section \ref{se:Calabi}, we deduce that $\mu_{\omega}$ has a self linking number for $\tilde{\check f}_{2mb}$ and that
$$\left\vert\mathrm{link}_{\tilde{\check f}_{2mb}} (\mu_{\omega}) -\int_{D_{a/b}^*}\mathrm{rot}_{\tilde{\check f}_{2mb}}(z) \,d\mu_{\omega}(z)\,d\mu_{\omega}(z')\right\vert \leq 8mb\,A(a,b)\,C(a,b).$$
But we know that $\tilde{\check f}_{2mb} =\check f^b\circ T^{-a}$ and so we obtain
$$\left\vert b\,\mathrm{link}_{ \tilde f_{\vert \tilde D_{a/b}}} (\mu_{\omega})-aA(a,b)^2- A(a,b)\int_{D_{a/b}^*}\mathrm{rot}_{\tilde{\check f}_{2mq}}(z) \, d\mu_{\omega}(z)\right\vert \leq 8mb\,A(a,b)\,C(a,b),$$
which can be written
$$\left\vert\mathrm{link}_{ \tilde f_{\vert \tilde D_{a/b}}}(\mu_{\omega})-{a\over b}A(a,b)^2 -{A(a,b)\,C(a,b)\over b}\right\vert \leq 8mA(a,b)C(a,b).$$
One can find a sequence $(a_n, b_n)_{n\geq 0}$ such that $\lim_{n\to+\infty} a_n- b_n\alpha =0$. Writing
$$\left\vert\mathrm{link}_{ \tilde f_{\vert \tilde D_{a_n/b_n}}}(\mu_{\omega})-{a_n\over b_n}A(a_n,b_n) ^2-{A(a_n,b_n)\,C(a_n,b_n)\over b}\right\vert \leq 8mA^2(a_n,b_n)\,C(a_n,b_n)$$ and letting $n$ tend to $+\infty$, one obtains
$$\mathrm{link}_{ \tilde f_{\vert \tilde \D}}(\mu_{\omega})-\pi^2\alpha=0.$$
But we know that $\mathrm{link}_{ \tilde f_{\vert \tilde \D}}(\mu_{\omega})= \widetilde{\mathrm{Cal}}(I)$, so we can conclude.
\end{proof}



\end{document}